\documentclass[11pt]{amsart}
\usepackage[
  left=1.15in,
  right=1.15in,
  top=1.0in,
  bottom=1.05in
]{geometry}
\usepackage{amsmath,amsthm,amssymb,mathtools,amsfonts,amsopn,amscd}
\usepackage{bm}
\usepackage{tikz}
\usepackage{tikz-cd}    
\usepackage{rotating}
\usepackage{graphicx}
\usepackage{etoolbox}
\usepackage{enumitem}
\usepackage{longtable}
\usepackage{booktabs} 
\usepackage{tabularx} 
\usepackage{comment}
\usepackage{hyperref}
\usepackage{subfiles}
\usepackage{calc}
\usepackage[numbered]{bookmark}  
\usepackage{mathrsfs}
\usepackage[sc]{mathpazo}
\usepackage{euscript}

\usepackage[noabbrev]{cleveref}
\crefname{section}{Section}{Sections}
\Crefname{section}{Section}{Sections}
\crefname{subsection}{Section}{Sections}
\Crefname{subsection}{Section}{Sections}
\crefname{equation}{Equation}{Equations}
\Crefname{equation}{Equation}{Equations}
\crefname{figure}{Figure}{Figures}
\Crefname{figure}{Figure}{Figures}
\crefname{table}{Table}{Tables}
\Crefname{table}{Table}{Tables}
\crefname{thm}{Theorem}{Theorems}
\Crefname{thm}{Theorem}{Theorems}
\crefname{lem}{Lemma}{Lemmas}
\Crefname{lem}{Lemma}{Lemmas}
\crefname{prop}{Proposition}{Propositions}
\Crefname{prop}{Proposition}{Propositions}
\crefname{cor}{Corollary}{Corollaries}
\Crefname{cor}{Corollary}{Corollaries}
\crefname{df}{Definition}{Definitions}
\Crefname{df}{Definition}{Definitions}
\crefname{ex}{Example}{Examples}
\Crefname{ex}{Example}{Examples}
\crefname{rmk}{Remark}{Remarks}
\Crefname{rmk}{Remark}{Remarks}
\crefname{conj}{Conjecture}{Conjectures}
\Crefname{conj}{Conjecture}{Conjectures}
\crefname{clm}{Claim}{Claims}
\Crefname{clm}{Claim}{Claims}

\setlist[enumerate,1]{label=(\roman*)}

\AddToHook{env/lem/begin}{\crefalias{thm}{lem}}
\AddToHook{env/prop/begin}{\crefalias{thm}{prop}}
\AddToHook{env/cor/begin}{\crefalias{thm}{cor}}
\AddToHook{env/df/begin}{\crefalias{thm}{df}}
\AddToHook{env/ex/begin}{\crefalias{thm}{ex}}
\AddToHook{env/rmk/begin}{\crefalias{thm}{rmk}}
\AddToHook{env/clm/begin}{\crefalias{thm}{clm}}
\AddToHook{env/conj/begin}{\crefalias{thm}{conj}}

\definecolor{secondaryColor}{RGB}{0, 0, 170}
\hypersetup{
	colorlinks=true,
	linkcolor=blue,
	urlcolor=secondaryColor,
	citecolor=secondaryColor,
	linktoc=page,
}

\theoremstyle{plain}
\newtheorem{thm}{Theorem}[section]
\newtheorem{lem}[thm]{Lemma}
\newtheorem{prop}[thm]{Proposition}

\newtheorem{conj}[thm]{Conjecture}

\theoremstyle{definition}
\newtheorem{df}[thm]{Definition}

\theoremstyle{remark}
\newtheorem{rmk}[thm]{Remark}

\newcommand{\ZZ}{\mathbb{Z}}
\newcommand{\NN}{\mathbb{N}}
\newcommand{\QQ}{\mathbb{Q}}
\newcommand{\RR}{\mathbb{R}}
\newcommand{\CC}{\mathbb{C}}

\newcommand{\FF}{\mathbb{F}}

\newcommand{\mfk}{\mathfrak{m}}

\newcommand{\Hfk}{\mathfrak{H}}

\newcommand{\Ical}{\mathcal{I}}

\newcommand{\Rcal}{\mathcal{R}}

\newcommand{\Zcal}{\mathfrak{Z}}

\newcommand{\id}{\operatorname{id}}

\let\oldforall\forall
\renewcommand{\forall}{\oldforall \: }
\let\oldexist\exists
\renewcommand{\exists}{\oldexist \: }

\newcommand{\dep}{\operatorname{dep}}
\newcommand{\wt}{\operatorname{wt}}

\newcommand{\BC}{\operatorname{BC}}

\newcommand{\ww}[1]{\mathfrak{#1}}

\DeclareMathOperator{\Span}{span}

\newcommand{\ov}[1]{\overline{#1}}
\newcommand{\ant}{{\star-\mathrm{inv}}}
\newcommand{\ev}{\mathrm{ev}}
\newcommand{\sA}{\mathscr{A}}
\newcommand{\Li}{\mathrm{Li}}
\allowdisplaybreaks

\makeatletter

\newcommand{\myToC}{{
		\renewcommand{\contentsname}{}
		\@starttoc{toc}{\contentsname}
}}

\patchcmd{\@tocline}
{\hfil}
{\leaders\hbox{\,.\,}\hfil}

\makeatother

\title[Mishiba's Conjecture on the Coaction of $\infty$-adic Multiple Zeta Values]{Mishiba's Conjecture on the Coaction of $\infty$-adic Multiple Zeta Values}

\author{Hung-Chun Tsui}
\address{(Hung-Chun Tsui) Department of Mathematics, National Tsing Hua University, No. 101, Sec. 2, Guangfu Rd., East Dist., Hsinchu City 300044, Taiwan (R.O.C.)}
\email{hctsui@gapp.nthu.edu.tw}

\date{\today}

\subjclass[2020]{Primary 11M32; Secondary 11M38, 11R58}
\keywords{Function fields, multiple zeta values, coactions, $\star$-inverse}

\begin{document}
\begin{abstract}
We study multiple zeta values in positive characteristic. We construct, using $\infty$-adic multiple zeta values, a coaction of the
$\infty$-adic multiple zeta value algebra modulo $\zeta_A(q-1)$ on the
$\infty$-adic multiple zeta value algebra itself, and prove that it agrees
with Mishiba's coaction constructed via special values of Carlitz multiple polylogarithms.
We also determine the coaction on the $\star$-inverse values and show that
the antipode on the $\infty$-adic multiple zeta value algebra modulo
$\zeta_A(q-1)$ is given by the $\star$-inverse operation. In particular,
these results prove Mishiba's conjecture concerning the coaction and the
antipode for $\infty$-adic multiple zeta values.
\end{abstract}

\maketitle
\tableofcontents
\section{Introduction}
\subsection{Classical Background}
\label{sec:classical-background}
Let $\NN$ denote the set of positive integers and $\ZZ$ the set of
integers. We let
\[
\Ical=\{\varnothing\}\cup\bigcup_{r\geq1}\NN^r
\]
denote the set of indices and define
\[
\dep(\ww{s})=r,
\qquad
\wt(\ww{s})=s_1+\cdots+s_r,
\qquad
\dep(\varnothing)=\wt(\varnothing)=0
\]
for every $\ww{s}=(s_1,\ldots,s_r)\in\Ical$.
For an admissible non-empty index
$\ww{s}=(s_1,\ldots,s_r)\in\Ical$, namely $s_1\geq2$, the classical
\emph{multiple zeta value} is defined by
\[
\zeta(\ww{s})
=
\sum_{n_1>\cdots>n_r\geq1}
\frac{1}{n_1^{s_1}\cdots n_r^{s_r}}
\in\RR.
\]
Multiple zeta values are known to satisfy the stuffle and shuffle product
relations.  A further structure appears at the motivic level. Following
\cite{Bro12}, let
$\Zcal^{\mfk}$ denote the graded $\QQ$-algebra of \emph{motivic multiple zeta
values}, and write $\zeta^{\mfk}(\ww{s})\in\Zcal^{\mfk}$
for the motivic multiple zeta value associated with an admissible index
$\ww{s}$.  By \cite{Bro12}, the quotient
\[
\ov{\Zcal}^{\mfk}
=
\Zcal^{\mfk}/\zeta^{\mfk}(2)\Zcal^{\mfk}
\]
carries a natural structure of a graded Hopf algebra, and
$\Zcal^{\mfk}$ is naturally a graded comodule over
$\ov{\Zcal}^{\mfk}$ through the motivic coaction
\[
\Delta^{\mfk}:
\Zcal^{\mfk}
\longrightarrow
\ov{\Zcal}^{\mfk}\otimes_\QQ\Zcal^{\mfk}.
\]

Let $\Zcal$ denote the $\QQ$-algebra generated by the classical multiple
zeta values. There is a surjective period homomorphism
\[
\operatorname{per}:
\Zcal^{\mfk}
\longrightarrow
\Zcal
\]
satisfying
\[
\operatorname{per}
\left(\zeta^{\mfk}(\ww{s})\right)
=
\zeta(\ww{s}).
\]
It is conjectured that this period map is an isomorphism \cite[\S3.2]{Bro14}.  This naturally
raises the question of whether the motivic coaction admits a counterpart
directly at the level of multiple zeta values, in particular after
passing to the quotient by $\zeta(2)$.
\subsection{\texorpdfstring{$\infty$}{∞}-adic Multiple Zeta Values}
We now turn to the function field setting. Let
$p$ be a prime number and let $q$ be a power of $p$. Put
\[
A=\FF_q[\theta],
\qquad
K=\FF_q(\theta),
\]
and let $K_\infty=\FF_q(\!(1/\theta)\!)$ be the completion of $K$ at the
infinite place. Let $\CC_\infty$ be the completion of an algebraic
closure of $K_\infty$, and view a fixed algebraic closure $\ov{K}$ of
$K$ as a subfield of $\CC_\infty$. 

Thakur introduced the function field analogue of multiple zeta values
$\zeta_A(\ww{s})$ in \cite{Tha04}. Throughout this paper, we refer to
these values as the \emph{$\infty$-adic multiple zeta values}. In
\cite{Cha14}, Chang introduced the \emph{Carlitz multiple polylogarithms}. We will mainly consider their special
values $\Li_{\ww{s}}(\mathbf{1})$, where $\mathbf{1}$ denotes the tuple
$(1,\ldots,1)$ of the appropriate depth. Recently, Mishiba \cite{Mis26} introduced the corresponding
$\star$-inverse values $\zeta_A^{\ant}(\ww{s})$ and
$\Li_{\ww{s}}^\ant(\mathbf{1})$. (For the precise definitions, see \S\ref{sec:algebraic-setup}.)

Put $L_1=\theta-\theta^q$ and fix an $\FF_p(L_1)$-subalgebra $R$ of
$\ov{K}$. We let $\Zcal_{\infty,R}$ denote the $R$-algebra generated by
the $\infty$-adic multiple zeta values $\zeta_A(\ww{s})$, with
$\ww{s}\in\Ical$. For each $w\geq0$, we also write
\[
\Zcal_{\infty,R,w}
=
\Span_R
\left\{
\zeta_A(\ww{s})
:
\ww{s}\in\Ical,\ \wt(\ww{s})=w
\right\}.
\]
We further put
\[
\ov{\Zcal}_{\infty,R}
=
\Zcal_{\infty,R}/\zeta_A(q-1)\Zcal_{\infty,R},
\]
and let
\[
\pi_\infty:
\Zcal_{\infty,R}
\longrightarrow
\ov{\Zcal}_{\infty,R}
\]
denote the quotient map.

For $\ww{s}=(s_1,\ldots,s_r)\in\Ical$ and $0\leq i\leq r$, we define
\[
\ww{s}[:i]=(s_1,\ldots,s_i),
\qquad
\ww{s}[i+1:]=(s_{i+1},\ldots,s_r).
\]
Any truncation outside the indicated range is understood to be
$\varnothing$. 

In \cite{Mis26}, Mishiba constructed a coaction on
$\Zcal_{\infty,R}$. The resulting structure may be summarized as follows.

\begin{thm}[{\cite[Theorem~1.3.4]{Mis26}}]\label{thm:mishiba-coaction}
The following assertions hold.

\begin{enumerate}
\item
There exists an $R$-algebra homomorphism
\[
\widetilde{\Delta}_\infty^{\Li}:
\Zcal_{\infty,R}
\longrightarrow
\Zcal_{\infty,R}\otimes_R\ov{\Zcal}_{\infty,R}
\]
such that, for every
$\ww{s}\in\Ical$ with $\dep(\ww{s})=r$,
\[
\widetilde{\Delta}_\infty^{\Li}
\left(
\Li_{\ww{s}}(\mathbf{1})
\right)
=
\sum_{i=0}^r
\Li_{\ww{s}[:i]}(\mathbf{1})
\otimes
\pi_\infty
(\Li_{\ww{s}[i+1:]}(\mathbf{1})).
\]

\item The map $\widetilde{\Delta}_\infty^{\Li}$ is a coaction, and
$\ov{\Zcal}_{\infty,R}$ carries a connected graded Hopf algebra
structure compatible with this coaction.

\item
The antipode
\[
S_\infty:
\ov{\Zcal}_{\infty,R}
\longrightarrow
\ov{\Zcal}_{\infty,R}
\]
is a non-trivial $R$-algebra involution satisfying
\[
S_\infty
\left(
\pi_\infty(\Li_{\ww{s}}(\mathbf{1}))
\right)
=
\pi_\infty
\left(
\Li_{\ww{s}}^\ant(\mathbf{1})
\right)
\]
for every $\ww{s}\in\Ical$. 
\end{enumerate}
\end{thm}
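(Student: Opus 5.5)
The strategy is to build the coaction first on the algebra generated by the $\Li_{\ww{s}}(\mathbf{1})$, and only then transport it to $\Zcal_{\infty,R}$. The transport rests on two known facts: the $R$-span of the $\Li_{\ww{s}}(\mathbf{1})$ equals that of the $\zeta_A(\ww{s})$, weight by weight (Chang's and Chen's results relating Carlitz multiple polylogarithms at algebraic points to $\infty$-adic MZVs), and both families satisfy $q$-shuffle/stuffle-type product formulas, so that $\Zcal_{\infty,R}=\bigoplus_w \Zcal_{\infty,R,w}$ is graded. I would take this grading as input; it comes from Chang's linear independence of MZVs of different weights.

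\textbf{Step 1: formal model.} Let $\Hfk$ be the free $R$-module on indices, equipped with the product encoding the product formula for $\Li_{\ww{s}}(\mathbf{1})$. On $\Hfk$, define the deconcatenation coproduct
\[
\ww{s}\mapsto\sum_{i=0}^r \ww{s}[:i]\otimes\ww{s}[i+1:].
\]
I would check that this is an algebra map for that product, by the standard argument for quasi-shuffle algebras (Hoffman). This gives a connected graded Hopf algebra, whose antipode is given by the usual alternating sum over compositions. This sum should match the recursive definition of the $\star$-inverse, so the formula $S(\ww{s})=\ww{s}^{\ant}$ holds at the formal level.

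\textbf{Step 2: well-definedness.} This is the main obstacle. The evaluation map $\ev:\Hfk\to\Zcal_{\infty,R}$ sending $\ww{s}\mapsto\Li_{\ww{s}}(\mathbf{1})$ is surjective, and I must show that its kernel $I$ satisfies
\[
\Delta(I)\subset I\otimes\Hfk+\Hfk\otimes\ev^{-1}(\zeta_A(q-1)\Zcal_{\infty,R}).
\]
For this I would use the motivic-style description of all relations: by the ABP/Papanikolas criterion, every $\ov K$-linear relation among these periods lifts to a relation among the fiber-coordinate series $\mathcal{L}_{\ww{s}}$ of the associated $t$-motives. The key input is Chang's period interpretation: the $\Li_{\ww{s}}(\mathbf{1})$ are entries of period matrices of iterated extensions of Carlitz tensor powers, and truncations $\ww{s}[:i]$ and $\ww{s}[i+1:]$ correspond to sub- and quotient motives. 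A Galois/Tannakian argument should then show that a relation among full entries induces a relation among left factors, modulo the pure period $\tilde\pi^{q-1}\sim\zeta_A(q-1)$ on the right. I would attempt this by writing the period matrix $\Psi$ for the block extension, noting that $\Psi$ is lower-triangular with products of sub-blocks, and applying a relation matrix that annihilates the last row. Killing $\tilde\pi$, equivalently $\zeta_A(q-1)$, accounts exactly for the Carlitz period factors.

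\textbf{Step 3: descent.} Once Step 2 is established, $\Delta$ descends to $\widetilde{\Delta}_\infty^{\Li}$, proving (i). Coassociativity and counit descend from $\Hfk$. The quotient $\ov{\Zcal}_{\infty,R}$ inherits the bialgebra structure, since $\zeta_A(q-1)$ is primitive modulo itself; connectedness and grading give the antipode. Because $S$ is determined by the coproduct, Step 1 yields (iii). Finally, $S^2=\id$ holds because the algebra is commutative, and non-triviality follows from a weight-$w$ example where $\Li^{\ant}_{\ww{s}}\neq\Li_{\ww{s}}$ modulo $\zeta_A(q-1)$, for instance in depth $2$, checked using the linear independence results.
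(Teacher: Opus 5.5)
Your Steps 1 and 3 are essentially sound. $(\Hfk_R,\ast^{\Li},\Delta)$ is Hoffman's quasi-shuffle Hopf algebra, and $\mathscr{S}_d^{\Li}(s)\mathscr{S}_d^{\Li}(s')=\mathscr{S}_d^{\Li}(s+s')$. So splitting the non-strict inequalities in $\mathscr{L}_\infty^{\Li,\ant}$ does reproduce Hoffman's antipode. Once $\pi_\infty\circ\mathscr{L}_\infty^{\Li}$ is known to be a bialgebra map, uniqueness of convolution inverses gives (iii); the paper gets the same formula from the antipode identity and \cref{lem:degree-convolution}, as in \cref{thm:infty-Hopf-structure}.

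The gap is Step~2, which is the entire content of (i), and what you write there is not an argument. Papanikolas' theory says the relations among the entries of $\Psi$ define a torsor $Z_\Psi$ under $\Gamma_\Psi$, an algebraic group over $\FF_q(t)$. Right translation therefore gives a coaction whose right-hand factor is the coordinate ring of $\Gamma_\Psi$, not $\ov{\Zcal}_{\infty,R}$. To land in $\ov{\Zcal}_{\infty,R}$ you would have to identify (the unipotent part of) that coordinate ring, after $t=\theta$, with $\Zcal_{\infty,R}/\zeta_A(q-1)\Zcal_{\infty,R}$ compatibly with deconcatenation. That is essentially the theorem itself. ``Applying a relation matrix that annihilates the last row'' does not explain why one linear relation among the $\Li_{\ww{s}}(\mathbf{1})$ forces relations among the prefixes with coefficients in the quotient. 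Likewise, the claim that killing $\zeta_A(q-1)$ ``accounts exactly for the Carlitz period factors'' is precisely what has to be proved.

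The missing idea is to use the explicit description of the kernel. By \cref{prop:Mishiba-kernel}, $\ker\mathscr{L}_\infty^{\Li}$ is spanned by the explicit elements $\sA^{\Li}(\ww{s};m;\ww{n})$; this is where the ABP/Papanikolas theory actually enters, through \cite{CCM23}. Step~2 then reduces to a combinatorial check on these generators, which is \cref{prop:formal-coaction} with $\bullet=\Li$. Write $\sA^{\Li}(\ww{s};m;\ww{n})=[\ww{s},P_m^{\Li}(\ww{n})]-L_1^mB_{\ww{s}}^{\Li}(\alpha^m(\ww{n}))$ as in \eqref{eq:coaction-A-PB}, with $\dep(\ww{s})=r$ and $\dep(\ww{n})=\ell$. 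The three kinds of cut behave as follows:
\begin{enumerate}
\item A cut at a position $0\le j\le r$ (inside $\ww{s}$ or at its end) produces $\ev([\ww{s}[:j]])\otimes\ov{\ev}(\sA^{\Li}(\ww{s}[j+1:];m;\ww{n}))$.
\item A cut inside $\ww{n}$ produces $\ev(\sA^{\Li}(\ww{s};m;\ww{n}[:k]))\otimes\ov{\ev}([\ww{n}[k+1:]])$.
\item Cuts inside $\{q\}^m$ pair with cuts inside $\alpha^m=\alpha_{1;(q-1)}^{\Li,m}$. Using $\ev([\ww{s},\{q\}^i])=L_1^i\ev(B_{\ww{s}}^{\Li}(\alpha^i(1)))$, they combine into $\ev([\ww{s},\{q\}^i])\otimes\ov{\ev}(\sA^{\Li}(\varnothing;m-i;\ww{n}))$.
\end{enumerate}
The decisive point, and the precise reason one must divide by $\zeta_A(q-1)$, concerns $\alpha(P)=[1,[q-1]\ast^{\Li}P]$. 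It deconcatenates as $1\otimes\ov{\ev}(\alpha(P))+(\alpha\otimes\id)\Rcal_\pi(P)$ only because $\ov{\ev}([q-1])=\pi_\infty(\zeta_A(q-1))=0$ (\cref{lem:formal-coaction-cuts}). You need either this computation or a Tannakian argument worked out to the same level of precision.

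Your non-triviality claim for $S_\infty$ is also only asserted. For $q$ odd, depth one suffices. For $q=2$, however, the quotient vanishes in weights $1$ and $2$ and has rank one in weight $3$, where an involution must be the identity in characteristic $2$. So an actual computation in higher weight is required.
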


Mishiba further conjectured that the same maps are compatible with the
$\infty$-adic multiple zeta values.

\begin{conj}[{\cite[Conjecture~1.3.7]{Mis26}}]\label{conj:mishiba}
For every
$\ww{s}=(s_1,\ldots,s_r)\in\Ical$, the following assertions hold.

\begin{enumerate}
\item
The coaction $\widetilde{\Delta}_\infty^{\Li}$ satisfies
\[
\widetilde{\Delta}_\infty^{\Li}
\left(
\zeta_A(\ww{s})
\right)
=
\sum_{i=0}^r
\zeta_A(\ww{s}[:i])
\otimes
\pi_\infty
\left(
\zeta_A(\ww{s}[i+1:])
\right).
\]

\item
The antipode $S_\infty$ satisfies
\[
S_\infty
\left(
\pi_\infty(\zeta_A(\ww{s}))
\right)
=
\pi_\infty
\left(
\zeta_A^\ant(\ww{s})
\right).
\]
\end{enumerate}
\end{conj}

\subsection{Main Results}
We develop a formalism based on evaluation maps and deconcatenation
which treats the $\zeta$- and $\Li$-realizations, together with their
$\star$-inverse counterparts, in a uniform manner. Applying this
formalism to $\infty$-adic multiple zeta values, we construct a
coaction directly from the $\zeta$-realization and show that it
coincides with Mishiba's coaction $\widetilde{\Delta}_\infty^\Li$.
The same formalism also determines the coaction on the corresponding
$\star$-inverse values and the action of the antipode.

Our main results are the following.

\begin{thm}[Restated as \cref{thm:infty-coaction}]
\label{thm:main-infty-coaction}
Let $R$ be an $\FF_p(L_1)$-subalgebra of $\ov{K}$. There exists a
well-defined $R$-algebra homomorphism
\[
\widetilde{\Delta}_\infty:
\Zcal_{\infty,R}
\longrightarrow
\Zcal_{\infty,R}\otimes_R\ov{\Zcal}_{\infty,R}
\]
such that, for every
$\ww{s}\in\Ical$ with $\dep(\ww{s})=r$,
\[
\widetilde{\Delta}_\infty
(
\zeta_A(\ww{s})
)
=
\sum_{i=0}^r
\zeta_A(\ww{s}[:i])
\otimes
\pi_\infty
(
\zeta_A(\ww{s}[i+1:])
).
\]
Moreover, we have
\[
\widetilde{\Delta}_\infty
(
\Li_{\ww{s}}(\mathbf{1})
)
=
\sum_{i=0}^r
\Li_{\ww{s}[:i]}(\mathbf{1})
\otimes
\pi_\infty
(
\Li_{\ww{s}[i+1:]}(\mathbf{1})
),
\]
\[
\widetilde{\Delta}_\infty
(
\zeta_A^\ant(\ww{s})
)
=
\sum_{i=0}^r
\zeta_A^\ant(\ww{s}[i+1:])
\otimes
\pi_\infty
(
\zeta_A^\ant(\ww{s}[:i])
),
\]
and
\[
\widetilde{\Delta}_\infty
(
\Li_{\ww{s}}^\ant(\mathbf{1})
)
=
\sum_{i=0}^r
\Li_{\ww{s}[i+1:]}^\ant(\mathbf{1})
\otimes
\pi_\infty
(
\Li_{\ww{s}[:i]}^\ant(\mathbf{1})
).
\]
In particular, we have
\[
\widetilde{\Delta}_\infty
=
\widetilde{\Delta}_\infty^\Li
\]
and \cref{conj:mishiba}~(i) holds.
\end{thm}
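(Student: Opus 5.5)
The strategy is to reduce everything to Mishiba's coaction $\widetilde{\Delta}_\infty^{\Li}$ of \cref{thm:mishiba-coaction}, by expressing $\zeta_A(\ww{s})$ as an explicit $R$-linear combination of the values $\Li_{\ww{t}}(\mathbf{1})$. The combinatorial device is the Hopf algebra of indices. Let $\Hfk$ be the free $R$-module on $\Ical$, with deconcatenation coproduct $\Delta(\ww{s})=\sum_i \ww{s}[:i]\otimes\ww{s}[i+1:]$, and with an evaluation map $\ev_{\Li}:\Hfk\to\Zcal_{\infty,R}$ sending $\ww{s}\mapsto\Li_{\ww{s}}(\mathbf{1})$. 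The known Chen-type formula (from Chang's and Chen's work, assumed available in \S\ref{sec:algebraic-setup}) gives
\[
\zeta_A(\ww{s})=\sum_{\ww{t}} c_{\ww{s},\ww{t}}\,\Li_{\ww{t}}(\mathbf{1}),
\]
where $c_{\ww{s},\ww{t}}\in\FF_p(L_1)\subset R$ and the coefficients depend only on $q$ and the $L_1$-factors. I would define an $R$-linear map $\phi:\Hfk\to\Hfk$ by $\phi(\ww{s})=\sum_{\ww{t}} c_{\ww{s},\ww{t}}\ww{t}$, so that $\ev_\zeta=\ev_{\Li}\circ\phi$.

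The key step is to show that $\phi$ is compatible with deconcatenation \emph{modulo} $\ker(\pi_\infty\circ\ev_{\Li})$ on the right factor. Concretely, I want
\[
(\ev_{\Li}\otimes\pi_\infty\ev_{\Li})\circ\Delta\circ\phi=(\ev_{\Li}\otimes\pi_\infty\ev_{\Li})\circ(\phi\otimes\phi)\circ\Delta .
\]
I expect the coefficients $c_{\ww{s},\ww{t}}$ to arise from merging adjacent entries in blocks. Here $\ww{t}$ is obtained from $\ww{s}$ by replacing consecutive pairs with entries of the form $s_i+s_{i+1}-(q-1)$, weighted by powers of $L_1$ and Bernoulli–Carlitz-type constants. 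A cut of $\ww{t}$ then either falls between blocks, which gives $\phi\otimes\phi$ applied to a cut of $\ww{s}$, or falls inside a merge. The terms of the second kind should assemble into multiples of $\Li_{(q-1)}(\mathbf{1})$, which is a nonzero $K$-multiple of $\zeta_A(q-1)$ (namely $\zeta_A(q-1)=\Li_{q-1}(1)$ up to $L_1$-powers), on the right tensor factor, and therefore vanish under $\pi_\infty$. Granting this, applying $\widetilde{\Delta}_\infty^{\Li}$ to $\zeta_A(\ww{s})=\ev_{\Li}\phi(\ww{s})$ and using \cref{thm:mishiba-coaction}(i) gives exactly the formula in \cref{conj:mishiba}(i). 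We may then simply \emph{define} $\widetilde{\Delta}_\infty:=\widetilde{\Delta}_\infty^{\Li}$. Well-definedness and multiplicativity are inherited, and both the $\zeta$ and $\Li$ formulas hold. This also gives $\widetilde{\Delta}_\infty=\widetilde{\Delta}_\infty^{\Li}$ by construction.

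For the $\star$-inverse values, I would use the realization $\zeta_A^{\ant}=\ev_\zeta\circ\iota$, where $\iota$ is the $\star$-antipode on $\Hfk$, i.e. the convolution inverse with respect to the harmonic/deconcatenation structure in Mishiba's definition. The defining relation $\sum_i \zeta_A^{\ant}(\ww{s}[:i])\star\zeta_A(\ww{s}[i+1:])=\delta_{\ww{s},\varnothing}$ (or its mirror) together with the standard fact that an antipode is an anti-coalgebra map, $\Delta\circ S=(S\otimes S)\circ\tau\circ\Delta$, gives the flipped deconcatenation formulas. Concretely, I would prove the $\zeta^{\ant}$ formula by induction on depth. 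Apply $\widetilde{\Delta}_\infty$ to the defining relation, use multiplicativity and the already established $\zeta$ formula, and solve for $\widetilde{\Delta}_\infty(\zeta_A^{\ant}(\ww{s}))$ using the inductive hypothesis on shorter indices. The $\Li^{\ant}$ case is identical. Here I must check that the relation defining $\star$-inverses is an honest product identity in $\Zcal_{\infty,R}$ (with $\ev$ multiplicative for the relevant product), since that is what lets me apply the homomorphism $\widetilde{\Delta}_\infty$.

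The main obstacle is the compatibility of the $\zeta$–$\Li$ transition $\phi$ with deconcatenation modulo $\zeta_A(q-1)$. Specifically, I must identify the "internal cut" terms precisely and show they lie in $\Zcal\otimes\zeta_A(q-1)\Zcal$. My first attempt would be an explicit induction on depth using the recursive form of Chen's formula $\zeta_A(\ww{s})=\Li_{\ww{s}}(\mathbf{1})+\sum(\text{depth-reduced terms})$. If that gets messy, I would instead characterize $\phi$ as a convolution $\phi=\mu\circ(\alpha\otimes\id)\circ\Delta$ for some "character" $\alpha$ supported on $\zeta_A(q-1)$-multiples.
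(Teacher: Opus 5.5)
\textbf{There is a genuine gap.} Reducing to Mishiba's map is legitimate. However, the step you call the main obstacle is not a lemma on the way to the theorem. It \emph{is} the theorem.

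Because $\widetilde{\Delta}_\infty^{\Li}$ is well defined (\cref{thm:mishiba-coaction}), evaluate your compatibility identity at $[\ww{s}]$. The left side is $\widetilde{\Delta}_\infty^{\Li}(\zeta_A(\ww{s}))$, and the right side is $\sum_i\zeta_A(\ww{s}[:i])\otimes\pi_\infty(\zeta_A(\ww{s}[i+1:]))$. Neither depends on the choice of $\phi$. So ``granting this'' grants \cref{conj:mishiba}~(i) outright, and the proposal contains no argument for it.

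Your heuristic does not point toward one either. The $\zeta$-to-$\Li$ transition does not merge entries. Reduction to the Thakur basis splits large entries and raises depth: for $q=2$ one has $\zeta_A(3)=\Li_3(1)+\Li_{2,1}(\mathbf{1})$, and $\zeta_A(5)$ involves indices of depth $3$ to $5$ (\cref{rmk:F_p[L_1]doesnotwork}). So there is no block-merging description of $\phi$ to exploit.

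More seriously, the internal cuts do not become multiples of $\zeta_A(q-1)$ for formal reasons. In \cref{rmk:F_p[L_1]doesnotwork} the surviving term is $\zeta_A(1,1)\otimes\pi_\infty(\zeta_A(1,1,1)+\zeta_A(2,1))$. It dies only because $\zeta_A(1,1,1)+\zeta_A(2,1)=\zeta_A(1)\zeta_A(1,1)/(L_1+1)$, that is, only once $L_1+1$ is invertible in $R$. Your coefficients $c_{\ww{s},\ww{t}}$ can already be taken in $\FF_p[L_1]$ (Ngo Dac), and nothing in your sketch uses $R\supseteq\FF_p(L_1)$ essentially. The $\zeta$-formula is false for $R=\FF_2[L_1]$, so a correct proof must say exactly where that hypothesis enters, and yours does not.

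\textbf{What the paper supplies instead.} The paper never passes through a $\zeta$-to-$\Li$ transition.
\begin{enumerate}
\item It defines the map on the $\zeta$-realization itself, $\zeta_A(P)\mapsto(\mathscr{L}_\infty^\zeta\otimes\pi_\infty\mathscr{L}_\infty^\zeta)\Delta(P)$. Well-definedness is proved by checking that this kills each generator $\sA^\zeta(\ww{s};m;\ww{n})$ of $\ker\mathscr{L}_\infty^\zeta$ (\cref{prop:Mishiba-kernel,prop:formal-coaction}).
\item The engine is \cref{lem:bullet-stuffle-transfer}. The triple $(\Hfk_R,\ast^\zeta,\Delta)$ is not a bialgebra: for $q=3$, with $\ast^\zeta$ applied componentwise, $\Delta([1]\ast^\zeta[3])$ and $\Delta([1])\ast^\zeta\Delta([3])$ differ by $[2]\otimes[2]$. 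Nevertheless $(\id\otimes\pi_\infty\mathscr{L}_\infty^\zeta)\Delta$ is multiplicative for $\ast^\zeta$ once $\pi_\infty(\zeta_A(j))=0$ for every $j$ with $q-1\mid j$.
\item This multiplicativity gives both the multiplicativity of the coaction and the cut formulas for $\alpha^m$, $D_c$, $B^\zeta_{\ww{s}}$, $P^\zeta_m$. Those formulas make the internal cuts of $\sA^\zeta(\ww{s};m;\ww{n})$ recombine into relations $\sA^\zeta$ with shorter data.
\item The vanishing $\pi_\infty(\zeta_A(j))=0$ is \cref{lem:Euler-Carlitz-L1}, via Euler--Carlitz. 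This is precisely where $\FF_p(L_1)$ is needed.
\item Only afterwards is the result compared with the $\Li$-coaction, on the Thakur basis. There all entries are at most $q$, so $S_d(s)=L_d^{-s}$ termwise.
\end{enumerate}

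\textbf{On the $\star$-inverse part.} Your induction on depth via $\zeta_A^\ant(\ww{s})=-\sum_{i\geq1}\zeta_A(\ww{s}[:i])\zeta_A^\ant(\ww{s}[i+1:])$ is the paper's \cref{prop:formal-coaction-ant}, and it works once (i) is known. You should drop the appeal to ``the antipode is an anti-coalgebra map''. By the $q=3$ example above, there is no Hopf algebra on $\Hfk_R$ for $\ast^\zeta$ in which that fact would apply.
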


Thus, after passing to the quotient by $\zeta_A(q-1)$, the usual deconcatenation of indices is compatible with the $q$-shuffle product and gives the coproduct on $\ov{\Zcal}_{\infty,R}$. 

The preceding theorem also determines the action of the antipode on
the $\zeta$-realization.

\begin{thm}[Restated as \cref{thm:infty-Hopf-structure}]
\label{thm:main-infty-antipode}
The antipode
\[
S_\infty:
\ov{\Zcal}_{\infty,R}
\longrightarrow
\ov{\Zcal}_{\infty,R}
\]
satisfies, for every $\ww{s}\in\Ical$,
\[
S_\infty
(
\pi_\infty(\zeta_A(\ww{s}))
)
=
\pi_\infty
(
\zeta_A^\ant(\ww{s})
),\qquad
S_\infty
(
\pi_\infty(\Li_{\ww{s}}(\mathbf{1}))
)
=
\pi_\infty
(
\Li_{\ww{s}}^\ant(\mathbf{1})
).
\]
In particular, \cref{conj:mishiba}~(ii) holds.
\end{thm}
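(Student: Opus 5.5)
We deduce the statement from \cref{thm:main-infty-coaction} together with the characterization of the antipode in a connected graded Hopf algebra. By \cref{thm:mishiba-coaction}~(ii), $\ov{\Zcal}_{\infty,R}$ is a connected graded Hopf algebra and $\widetilde{\Delta}_\infty^{\Li}$ is a compatible coaction. By \cref{thm:main-infty-coaction} this coaction equals $\widetilde{\Delta}_\infty$. Hence the coproduct $\Delta$ on $\ov{\Zcal}_{\infty,R}$ is obtained by applying $\pi_\infty\otimes\id$ to $\widetilde{\Delta}_\infty$, and on generators it is plain deconcatenation:
\[
\Delta\bigl(\pi_\infty(\zeta_A(\ww{s}))\bigr)=\sum_{i=0}^r\pi_\infty(\zeta_A(\ww{s}[:i]))\otimes\pi_\infty(\zeta_A(\ww{s}[i+1:])).
\]
The analogous formulas hold for $\Li$, $\zeta_A^\ant$ and $\Li^\ant$; for the last two the order of the tensor factors is reversed. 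Because the Hopf algebra is connected graded, the antipode is the unique map satisfying $m\circ(S_\infty\otimes\id)\circ\Delta=\eta\circ\varepsilon$. For $\ww{s}\neq\varnothing$ this gives the recursion
\[
\sum_{i=0}^r S_\infty\bigl(\pi_\infty(\zeta_A(\ww{s}[:i]))\bigr)\,\pi_\infty(\zeta_A(\ww{s}[i+1:]))=0,
\]
which determines $S_\infty(\pi_\infty(\zeta_A(\ww{s})))$ by induction on depth.

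It therefore suffices to show that $\pi_\infty(\zeta_A^\ant(\ww{s}))$ satisfies the same recursion, namely
\[
\sum_{i=0}^r\zeta_A^\ant(\ww{s}[:i])\,\zeta_A(\ww{s}[i+1:])=0\quad(\ww{s}\neq\varnothing),
\]
at least modulo $\zeta_A(q-1)$. This is the defining property of the $\star$-inverse: $\zeta_A^\ant$ is the inverse of $\zeta_A$ for the deconcatenation convolution, i.e.\ with respect to the concatenation product of generating series. I expect the definition in \S\ref{sec:algebraic-setup} to give this identity exactly, before passing to the quotient, either for the evaluation maps on words or via the formalism of evaluation maps and deconcatenation developed for \cref{thm:main-infty-coaction}.

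The one place needing care is the convention: the relation might come out as the other convolution, $\sum_i\zeta_A(\ww{s}[:i])\zeta_A^\ant(\ww{s}[i+1:])=0$. Two routes handle this.
\begin{itemize}
\item Use $m\circ(\id\otimes S_\infty)\circ\Delta=\eta\circ\varepsilon$ instead, since the antipode satisfies both identities.
\item Read the reversed deconcatenation formula for $\zeta_A^\ant$ in \cref{thm:main-infty-coaction}. Combined with $S_\infty$ being an anti-coalgebra map (here an algebra map, since the algebra is commutative), this is consistent with $S_\infty(\pi_\infty\zeta_A)=\pi_\infty\zeta_A^\ant$.
\end{itemize}
Either way, uniqueness of the convolution inverse of $\id$ in $\operatorname{End}(\ov{\Zcal}_{\infty,R})$ gives the identification on all generators. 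The same argument with $\Li_{\ww{s}}(\mathbf{1})$ and $\Li^\ant_{\ww{s}}(\mathbf{1})$ gives the second formula; it also recovers \cref{thm:mishiba-coaction}~(iii) as a consistency check.

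The main obstacle is confirming that the $\star$-inverse relation holds as an honest identity among values, so that it survives $\pi_\infty$. This is not a formal relation among words, since $\zeta_A$ is only an algebra map for the $q$-shuffle product. I would first check it directly from Mishiba's definition of $\zeta_A^\ant$ as the series-level inverse.
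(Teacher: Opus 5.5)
Your proposal is correct and is essentially the paper's proof. The paper takes $\Delta_\infty\circ\pi_\infty=(\pi_\infty\otimes\id)\circ\widetilde{\Delta}_\infty$ from Mishiba's construction, so the coproduct on $\pi_\infty(\zeta_A(\ww{s}))$ is deconcatenation, and then inducts on depth using $m\circ(\id\otimes S_\infty)\circ\Delta_\infty=\eta\circ\varepsilon$. The identity you flag as the main obstacle is exactly \cref{lem:degree-convolution}: a telescoping argument shows that both $\sum_i\zeta_A(\ww{s}[:i])\zeta_A^\ant(\ww{s}[i+1:])$ and $\sum_i\zeta_A^\ant(\ww{s}[:i])\zeta_A(\ww{s}[i+1:])$ vanish as genuine identities in $\CC_\infty$, for both $\zeta$ and $\Li$. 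So your convention worry disappears and either antipode identity works; only your first route is needed, since your second bullet is a consistency check rather than a proof.
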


Consequently, modulo $\zeta_A(q-1)$, the $\star$-inverse values satisfy exactly the same algebraic relations as the ordinary $\infty$-adic multiple zeta values.

The rest of the paper is organized as follows.
In \S\ref{sec:preliminary}, we recall the necessary background on the
$\zeta$- and $\Li$-realizations, their product structures, and the
linear relations among the corresponding values.
In \S\ref{sec:formalism}, we first compare the ordinary and
$\star$-inverse evaluations. We then study the behavior of the
relations $\sA^\bullet(\ww{s};m;\ww{n})$ under deconcatenation and use
these results to construct a formal coaction.
Finally, in \S\ref{sec:infty-coaction}, we apply this formalism to
$\infty$-adic multiple zeta values, identify the resulting coaction
with Mishiba's coaction arising from the $\Li$-realization, determine
the coaction on the $\star$-inverse values, and show that the antipode
on $\ov{\Zcal}_{\infty,R}$ is given by the $\star$-inverse operation.

\section{Preliminaries}\label{sec:preliminary}

\subsection{Algebraic Setup}\label{sec:algebraic-setup}
Recall that
\[
\Ical=\{\varnothing\}\cup\bigcup_{r\geq1}\NN^r
\]
denotes the set of indices. We let 
\[
\Hfk_R=\Span_R\{[\ww{s}]:\ww{s}\in\Ical\}
\] be the free $R$-module generated by the set of indices $\Ical$ and
\[
\Hfk_{R,w}=\Span_R\{[\ww{s}]:\ww{s}\in\Ical,\ \wt(\ww{s})=w\}
\]
for all $w\geq 0$. We put $1=[\varnothing]$.
Then the concatenation map
\[
[-,-]:\Hfk_R\times\Hfk_R\rightarrow\Hfk_R
\]
is the unique $R$-bilinear map determined by
\[
[\ww{s},\ww{n}]
=[s_1,\ldots,s_r,n_1,\ldots,n_\ell],
\qquad
[\varnothing,\ww{s}]=[\ww{s},\varnothing]=[\ww{s}],\qquad [\varnothing,\varnothing]=[\varnothing],
\]
where $\ww{s}=(s_1,\ldots,s_r),\ww{n}=(n_1,\ldots,n_\ell)\in\Ical$ are non-empty indices.
Iterated concatenations are understood $R$-multilinearly and are denoted by
\[
[-,\ldots,-]\colon \Hfk_R\times\cdots\times\Hfk_R\longrightarrow\Hfk_R.
\]
We identify an index with its corresponding basis element when no
confusion can arise. 

On the other hand, the deconcatenation map is the unique $R$-linear map
\[
\Delta:\Hfk_R\longrightarrow\Hfk_R\otimes_R\Hfk_R
\]
such that
\[
\Delta([\ww{s}])
=\sum_{i=0}^r[\ww{s}[:i]]\otimes[\ww{s}[i+1:]]
\]
for all indices $\ww{s}\in\Ical$.

Following \cite{CCM23,Mis26}, we define the realization maps as follows. For $d\geq0$ and $s\in\NN$, set
\[
S_d(s)
=
\sum_{\substack{a\in A\text{ monic}\\ \deg a=d}}
\frac{1}{a^s},\qquad
L_0=1,
\qquad
L_d=\prod_{i=1}^d(\theta-\theta^{q^i})
\quad(d\geq1).
\]
For $\bullet\in\{\zeta,\Li\}$, put
\[
\mathscr{S}_d^\zeta(s)=S_d(s),
\qquad
\mathscr{S}_d^\Li(s)=\frac{1}{L_d^s}.
\]

\begin{df}\label{df:infty-adic-realizations}
Let $\bullet\in\{\zeta,\Li\}$.  For any non-empty index $\ww{s}=(s_1,\ldots,s_r)\in\Ical$, define
\begin{align*}
\mathscr{L}_\infty^\bullet([\ww{s}])
&=
\sum_{d_1>\cdots>d_r\geq0}
\prod_{i=1}^r\mathscr{S}_{d_i}^\bullet(s_i)\in \CC_\infty,\\
\mathscr{L}_\infty^{\bullet,\ant}([\ww{s}])
&=
(-1)^r
\sum_{0\leq d_1\leq\cdots\leq d_r}
\prod_{i=1}^r\mathscr{S}_{d_i}^\bullet(s_i)\in \CC_\infty.
\end{align*}
Both maps take $1=[\varnothing]$ to $1$ and are extended $R$-linearly
to $\Hfk_R$.
\end{df}

For every $\ww{s}\in\Ical$, we have
\[
\mathscr{L}_\infty^\zeta([\ww{s}])=\zeta_A(\ww{s}),
\qquad
\mathscr{L}_\infty^{\zeta,\ant}([\ww{s}])=\zeta_A^\ant(\ww{s}),
\]
and
\[
\mathscr{L}_\infty^\Li([\ww{s}])=\Li_{\ww{s}}(\mathbf{1}),
\qquad
\mathscr{L}_\infty^{\Li,\ant}([\ww{s}])
=
\Li_{\ww{s}}^\ant(\mathbf{1}).
\]

\subsection{Shuffle and Stuffle Products}\label{sec:shuffle-stuffle}
We next recall the products corresponding to these two realizations.
For the $\zeta$-realization, the existence of the $q$-shuffle
relations was established by Thakur \cite{Tha10}. Chen subsequently
obtained an explicit formula in depth one \cite{Che15}. Based on
Chen's formula, Yamamoto observed the recursive construction of the
$q$-shuffle product, which was later formulated and proved by Shi
(see \cite[Definition~3.1.3 and Theorem~3.1.4]{Shi18}). For the
$\Li$-realization, the corresponding product is the usual
stuffle product \cite{Cha14}.

For $s,n,j\in\NN$, set
\[
\Delta_{s,n}^{[j]}
=
\begin{cases}
\displaystyle
(-1)^{s-1}\binom{j-1}{s-1}
+(-1)^{n-1}\binom{j-1}{n-1},
&q-1\mid j,\\[6pt]
0,&\text{otherwise}.
\end{cases}
\]

For any $\ww{s}=(s_1,\ldots,s_r)\in\Ical$, we put
\[
\ww{s}^{+}=(s_1,\ldots,s_{r-1}),
\qquad \varnothing^+=\varnothing,\qquad
\ww{s}^{-}=(s_2,\ldots,s_r), \qquad \varnothing^-=\varnothing.
\]

\begin{df}\label{df:bullet-stuffle}
Let $\bullet\in\{\zeta,\Li\}$. We define two $R$-bilinear products on $\Hfk_R$, the $q$-shuffle product
$\ast^\zeta$ and the stuffle product $\ast^\Li$, by
\[
1\ast^\bullet P=P\ast^\bullet1=P
\]
and, for non-empty indices
$\ww{s}=(s_1,\ldots,s_r)$ and
$\ww{n}=(n_1,\ldots,n_\ell)$, by
\begin{equation}\label{eq:bullet-stuffle}
\begin{aligned}
\ww{s}\ast^\bullet\ww{n}
={}&[s_1,\ww{s}^-\ast^\bullet\ww{n}]
+[n_1,\ww{s}\ast^\bullet\ww{n}^-]
+[s_1+n_1,\ww{s}^-\ast^\bullet\ww{n}^-]
+D_{\ww{s}}^\bullet(\ww{n}).
\end{aligned}
\end{equation}
Here $D_{\ww{s}}^\Li(\ww{n})=0$, while
\begin{equation}\label{eq:D-zeta-definition}
D_{\ww{s}}^\zeta(\ww{n})
=
\sum_{j=1}^{s_1+n_1-1}
\Delta_{s_1,n_1}^{[j]}
[s_1+n_1-j,
[j]\ast^\zeta(\ww{s}^-\ast^\zeta\ww{n}^-)].
\end{equation}
For every non-empty $\ww{s}\in\Ical$, we set
$D_{\ww{s}}^\bullet(1)=0$ and extend
$D_{\ww{s}}^\bullet(-)$ to $\Hfk_R$ $R$-linearly. We abbreviate
\[
D_c^\bullet(-)=D_{(c)}^\bullet(-),
\qquad
D_{\ww{s}}(-)=D_{\ww{s}}^\zeta(-).
\]
\end{df}

Thus \eqref{eq:bullet-stuffle} specializes to the $q$-shuffle product
of $\infty$-adic multiple zeta values when $\bullet=\zeta$, and
to the usual stuffle product when
$\bullet=\Li$. Then we have the following theorem.

\begin{thm}[{\cite{Cha14,Shi18}; see also \cite[Proposition~2.7]{CCM23}}]
\label{thm:strict-bullet-character}
Let $\bullet\in\{\zeta,\Li\}$. For any $P,Q\in\Hfk_R$, we have

\[
\mathscr{L}_\infty^\bullet(P\ast^\bullet Q)
=
\mathscr{L}_\infty^\bullet(P)\mathscr{L}_\infty^\bullet(Q).
\]
\end{thm}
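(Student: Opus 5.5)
By bilinearity of both sides, it suffices to take $P=[\ww{s}]$ and $Q=[\ww{n}]$ to be basis elements. If either is $1$, the claim is immediate, since $\mathscr{L}_\infty^\bullet(1)=1$. We then induct on $\dep(\ww{s})+\dep(\ww{n})$, and for $\bullet=\zeta$ additionally on the total weight, because $D_{\ww{s}}^\zeta$ involves products $[j]\ast^\zeta(\cdots)$ of strictly smaller depth. The natural tool is a filtered version of the realization: for $d\geq0$, set
\[
\mathscr{L}_{<d}^\bullet([\ww{s}])=\sum_{d>d_1>\cdots>d_r\geq0}\prod_i\mathscr{S}_{d_i}^\bullet(s_i).
\]
Then
\[
\mathscr{L}_\infty^\bullet([\ww{s}])=\sum_{d}\mathscr{S}_d^\bullet(s_1)\mathscr{L}_{<d}^\bullet([\ww{s}^-]).
\]
We prove the stronger statement that $\mathscr{L}_{<d}^\bullet$ is multiplicative for every $d$. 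The infinite-level statement follows by letting $d\to\infty$, using $\infty$-adic convergence: $\deg$ of $S_d(s)$ and of $1/L_d^s$ tends to $-\infty$.

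For the truncated statement, we split the product of two sums according to the leading degrees $d_1$ of $\ww{s}$ and $e_1$ of $\ww{n}$. The region $d_1>e_1$ produces $[s_1,\ww{s}^-\ast\ww{n}]$ after applying the induction hypothesis to $\mathscr{L}_{<d_1}^\bullet(\ww{s}^-)\mathscr{L}_{<d_1}^\bullet(\ww{n})$. The region $e_1>d_1$ is symmetric. The diagonal $d_1=e_1=d$ contributes
\[
\mathscr{S}_d^\bullet(s_1)\mathscr{S}_d^\bullet(n_1)\,\mathscr{L}_{<d}^\bullet(\ww{s}^-)\mathscr{L}_{<d}^\bullet(\ww{n}^-).
\]
For $\bullet=\Li$ we have $\mathscr{S}_d(s_1)\mathscr{S}_d(n_1)=\mathscr{S}_d(s_1+n_1)$ exactly, which gives the third term and closes the stuffle case.

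For $\bullet=\zeta$, the key input is Chen's depth-one identity \cite{Che15}:
\[
S_d(s)S_d(n)=S_d(s+n)+\sum_j\Delta_{s,n}^{[j]}S_d(s+n-j)\,\mathscr{L}_{<d}^\zeta([j]).
\]
Substituting this, the diagonal term becomes
\[
S_d(s_1+n_1)\mathscr{L}_{<d}(\ww{s}^-)\mathscr{L}_{<d}(\ww{n}^-)+\sum_j\Delta^{[j]}_{s_1,n_1}S_d(s_1+n_1-j)\,\mathscr{L}_{<d}([j])\,\mathscr{L}_{<d}(\ww{s}^-)\,\mathscr{L}_{<d}(\ww{n}^-).
\]
Applying the induction hypothesis twice (for $\ww{s}^-\ast\ww{n}^-$, and then for $[j]\ast(\cdots)$), and summing over $d$, yields exactly $[s_1+n_1,\ww{s}^-\ast\ww{n}^-]+D^\zeta_{\ww{s}}(\ww{n})$.

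The main obstacle is the well-foundedness of this last induction. The product $[j]\ast^\zeta(\ww{s}^-\ast\ww{n}^-)$ can have depth as large as $\dep(\ww{s})+\dep(\ww{n})-1$, so depth alone does not decrease. I would therefore induct on the pair (total depth, weight) ordered lexicographically after isolating the first factor of depth one, or more cleanly on $\dep P+\dep Q$ while treating $[j]\ast X$ as a separate, previously established case in which one factor has depth $1$. That case is itself handled by an inner induction on $\dep X$. The second point needing care is verifying Chen's identity at the truncated level; this is what \cite{Che15} provides, and I would cite it rather than reprove it.
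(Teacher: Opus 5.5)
Your argument is correct and is essentially the standard proof in the sources the paper cites (the paper itself gives no proof of its own): you prove multiplicativity of the truncated sums $\mathscr{L}^\bullet_{<d}$ by splitting into the regions $d_1>e_1$, $d_1<e_1$, $d_1=e_1$, use Chen's fixed-degree formula on the diagonal in the $\zeta$ case, and then let $d\to\infty$. Your worry about well-foundedness is unnecessary: the induction hypothesis only needs the depth sum of the two factors to drop, and every index $\ww{m}$ occurring in $\ww{s}^-\ast^\zeta\ww{n}^-$ satisfies $1+\dep(\ww{m})\leq\dep(\ww{s})+\dep(\ww{n})-1$, so plain induction on $\dep(\ww{s})+\dep(\ww{n})$ closes, exactly as in the paper's proof of \cref{lem:bullet-stuffle-transfer}.
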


\subsection{Linear Relations}\label{sec:linear-relations}
By \cite[Theorem~2.2.1]{Cha14}, it is known that $\Zcal_{\infty,R}$ forms a $\ZZ_{\geq 0}$-graded algebra and every $\ov{K}$-linear relation among $\zeta_A(\ww{s})$ comes from a $K$-linear relation. 
Todd first conjectured a formula for the dimension of
$\Zcal_{\infty,K,w}$ in each weight $w$
\cite[Conjecture~7.1]{Tod18}. Later, Thakur further conjectured that a basis of $\Zcal_{\infty,K,w}$
is given by the values $\zeta_A(\ww{s})$ with $\ww{s}$ ranging over the set
of Thakur indices of weight $w$ 
\cite[Conjecture~8.2]{Tha17},
\[
\Ical_w^{\mathrm{T}}
=
\begin{cases}
\{\varnothing\},
& w=0,\\[1mm]
\left\{
(s_1,\ldots,s_r)\in\Ical :
s_i\leq q\ (1\leq i<r), 
s_r<q, \wt(\ww{s})=w
\right\},
& w\geq1.
\end{cases}
\]
We remark that $\zeta_A(\ww{s})=\mathscr{L}_\infty^\zeta([\ww{s}])=\mathscr{L}_\infty^\Li([\ww{s}])=\Li_{\ww{s}}(\mathbf{1})$ for all $\ww{s}\in\Ical^{\mathrm{T}}=\bigcup_{w\geq 0}\Ical_w^{\mathrm{T}}$ (see \cite[(2.7)]{CCM23}).
In \cite[Theorem~A]{ND21}, Ngo Dac proved that every multiple zeta value is an
$\FF_p[L_1]$-linear combination of these values. The full basis conjecture was later
proved independently in \cite[Theorem~1.5]{CCM23} and
\cite[Theorem~B]{IKLNP24}. Precisely, we have the following theorem.

\begin{thm}[{\cite[Theorem~1.5 and Corollary~3.7]{CCM23}, \cite[Theorem~B]{IKLNP24};
see also \cite[\S1.2]{Mis26}}]\label{thm:basis_for_infty}
Let $\bullet\in\{\zeta,\Li\}$. Then for each $w\geq0$,
\[
\Zcal_{\infty,R,w}
=
\mathscr{L}_\infty^\bullet(\Hfk_{R,w}),
\]
and
\[
\{
\mathscr{L}_\infty^\bullet([\ww{s}])
:
\ww{s}\in\Ical_w^{\mathrm{T}}
\}
\]
forms an $R$-basis of $\Zcal_{\infty,R,w}$.
\end{thm}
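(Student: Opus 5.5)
The plan is to assemble the statement from three ingredients: a spanning result in each realization, the coincidence of the two realizations on Thakur indices, and linear independence over $\ov{K}$. Throughout, fix $w\geq0$ and $\bullet\in\{\zeta,\Li\}$. The case $w=0$ is trivial, since both sides are $R\cdot1$ and $\Ical_0^{\mathrm T}=\{\varnothing\}$. So assume $w\geq1$.

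\emph{Step 1: spanning in the $\zeta$-realization.} By definition, $\Zcal_{\infty,R,w}=\mathscr{L}_\infty^\zeta(\Hfk_{R,w})$. I would invoke \cite[Theorem~A]{ND21}: every $\zeta_A(\ww{s})$ of weight $w$ is an $\FF_p[L_1]$-linear combination of the values $\zeta_A(\ww{t})$ with $\ww{t}\in\Ical^{\mathrm T}_w$. Ngo Dac's reduction is an explicit algorithm, built from the relations $\sA$ and the $q$-shuffle product, and it preserves weight. Since $R\supseteq\FF_p(L_1)$, this shows that $\{\zeta_A(\ww{t}):\ww{t}\in\Ical^{\mathrm T}_w\}$ spans $\Zcal_{\infty,R,w}$ over $R$.

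\emph{Step 2: the $\Li$-realization.} First, $S_d(s)=1/L_d^s$ for $s\leq q$. Hence $\mathscr{L}^\zeta_\infty([\ww{t}])=\mathscr{L}^\Li_\infty([\ww{t}])$ for $\ww{t}\in\Ical^{\mathrm T}$, which is \cite[(2.7)]{CCM23}. It then suffices to show that every $\Li_{\ww{s}}(\mathbf 1)$ of weight $w$ is an $\FF_p[L_1]$-combination of $\Li_{\ww{t}}(\mathbf 1)$ with $\ww{t}\in\Ical^{\mathrm T}_w$. I would run the analogue of Ngo Dac's reduction for the stuffle product $\ast^\Li$. The basic relation comes from $L_{d}=L_{d-1}\cdot(\theta-\theta^{q^{d}})$, which rewrites $1/L_d^{s}$ with $s>q$ in terms of smaller exponents. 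This is the content of \cite[Corollary~3.7]{CCM23}. It gives $\mathscr{L}^\Li_\infty(\Hfk_{R,w})\subseteq\Zcal_{\infty,R,w}$. The reverse inclusion follows from Step 1 and the coincidence on Thakur indices.

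\emph{Step 3: linear independence.} By \cite[Theorem~2.2.1]{Cha14}, every $\ov K$-linear relation among $\infty$-adic MZVs comes from a $K$-linear relation. Since $R\subseteq\ov K$, it suffices to prove that the values indexed by $\Ical^{\mathrm T}_w$ are $K$-linearly independent, that is, $\dim_K\Zcal_{\infty,K,w}\geq|\Ical^{\mathrm T}_w|$. This is the main obstacle, and I would follow \cite{CCM23}. First, realize each $\Li_{\ww{t}}(\mathbf 1)$ as a period of an explicit $t$-motive, a fibre-coefficient of a Frobenius difference system. Then apply the ABP criterion together with Chang's refinement, so that a $K$-linear relation among the Thakur-index values of weight $w$ lifts to a relation of rational functions in $t$. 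The final step is to show that no such lift exists, by an induction on depth that uses the constraint $s_i\leq q$, $s_r<q$. Precisely this constraint makes the relevant $t$-motivic extensions nontrivial and independent.

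Combining the three steps, the Thakur-index values form an $R$-basis of $\Zcal_{\infty,R,w}=\mathscr{L}^\bullet_\infty(\Hfk_{R,w})$ for both $\bullet$.
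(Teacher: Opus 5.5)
Your proposal is correct and is essentially what the paper does: the paper gives no argument of its own but imports the result from \cite[Theorem~1.5 and Corollary~3.7]{CCM23} and \cite[Theorem~B]{IKLNP24}. Your three steps are exactly how those citations combine into the stated form: spanning with $\FF_p[L_1]$-coefficients via \cite{ND21} and \cite[Corollary~3.7]{CCM23}, Carlitz's formula $S_d(s)=L_d^{-s}$ for $s\le q$ to identify the two realizations on Thakur indices, and the transcendence-theoretic independence of \cite{CCM23} transported from $K$ to $R\subseteq\ov{K}$ by \cite[Theorem~2.2.1]{Cha14}.

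Some of your informal descriptions of the cited proofs are inaccurate, but this does not affect the argument, since you defer those steps to the references. The basic $\Li$-relation does not come from the recursion $L_d=L_{d-1}(\theta-\theta^{q^d})$ lowering exponents. It comes from comparing $L_{d+1}$ with $L_d^q$ via $\theta-\theta^{q^{d+1}}=L_1+(\theta-\theta^{q^d})^q$, which gives $L_d^{-q}=L_1L_{d+1}^{-1}\sum_{e\le d}L_e^{1-q}$ and hence $\Li_q(1)=L_1\Li_{1,q-1}(\mathbf{1})$. Also, the Thakur set contains the Eulerian index $(q-1)$, with $\zeta_A(q-1)=\widetilde{\pi}^{q-1}/L_1$, so the condition $s_r<q$ does not make every relevant extension nontrivial.
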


In fact, the linear relations among these values are well understood from \cite{CCM23} (see also \cite{IKLNP24}),
and Mishiba considered the same framework over an arbitrary
$\FF_p[L_1]$-subalgebra $R$ of $\ov{K}$ (see \cite[\S2.3]{Mis26}). We recall the formulation below. Notice that we assume $R$ is an $\FF_p(L_1)$-algebra (see \cref{rmk:F_p[L_1]doesnotwork,rmk:F_p[L_1]doesnotwork-antipode}).

\begin{df}[{\cite[\S3]{CCM23}; see also \cite[\S2.3]{Mis26}}]\label{df:alpha}
For $c\in\NN$, $\ww{s}\in\Ical$, and
$\bullet\in\{\zeta,\Li\}$, define the $R$-linear map
\[
\alpha_{c;\ww{s}}^\bullet:\Hfk_R\longrightarrow\Hfk_R,
\qquad
\alpha_{c;\ww{s}}^\bullet(P)
=
[c,\ww{s}\ast^\bullet P].
\]
For $m\geq 1$, put
\[
\alpha_{c;\ww{s}}^{\bullet,m}
=
\underbrace{
\alpha_{c;\ww{s}}^\bullet\circ\cdots\circ
\alpha_{c;\ww{s}}^\bullet
}_{m\text{ times}},
\qquad
\alpha_{c;\ww{s}}^{\bullet,0}
=
\id_{\Hfk_R}.
\]
We also define the $R$-bilinear map
\[
\boxplus:\Hfk_R\times\Hfk_R\longrightarrow\Hfk_R
\]
by
\[
1\boxplus P=P\boxplus1=0
\qquad\text{and}\qquad
\ww{s}\boxplus\ww{n}
=
[s_1,\ldots,s_{r-1},s_r+n_1,n_2,\ldots,n_\ell]
\]
for any $P\in\Hfk_R$ and non-empty indices
$\ww{s}=(s_1,\ldots,s_r),\ww{n}=(n_1,\ldots,n_\ell)\in\Ical$.
Here $0$ denotes the zero vector of $\Hfk_R$, rather than the empty word
$1=[\varnothing]$.
\end{df}

For $c\in\NN$ and $m\geq1$, we write $\{c\}^m=(c,\ldots,c)$ for the index consisting of $m$ copies of $c$ and $\{c\}^0=\varnothing$.

\begin{df}[{\cite[\S3]{CCM23}; see also \cite[\S2.3]{Mis26}}]\label{df:Mishiba-relations}
Let $\bullet\in\{\zeta,\Li\}$. Put $\varepsilon_\zeta=1$ and $\varepsilon_\Li=0$. For $\ww{s},\ww{n}\in\Ical$ and $m\geq1$, define
\[
\begin{aligned}
\sA^\bullet(\ww{s};m;\ww{n})
={}&
[\ww{s},\{q\}^m,\ww{n}]
+
[\ww{s},\{q\}^m\boxplus\ww{n}]
+
\varepsilon_\bullet
[\ww{s},\{q\}^{m-1},D_q(\ww{n})]\\
&-
L_1^m
[\ww{s},\alpha_{1;(q-1)}^{\bullet,m}(\ww{n})]
-
L_1^m
[\ww{s}]\boxplus\alpha_{1;(q-1)}^{\bullet,m}(\ww{n})\\
&-
\varepsilon_\bullet L_1^m
[\ww{s}^+,D_{s_r}(\alpha_{1;(q-1)}^{\bullet,m}(\ww{n}))],
\end{aligned}
\]
if $\ww{s}=(s_1,\ldots,s_r)$ is non-empty, and
\[
\begin{aligned}
\sA^\bullet(\varnothing;m;\ww{n})
={}&
[\{q\}^m,\ww{n}]
+
[\{q\}^m\boxplus\ww{n}]
+
\varepsilon_\bullet
[\{q\}^{m-1},D_q(\ww{n})]
-
L_1^m
\alpha_{1;(q-1)}^{\bullet,m}(\ww{n}).
\end{aligned}
\]
We also set
\[
\mathscr{R}_R^\bullet
=
\Span_R
\left\{
\sA^\bullet(\ww{s};m;\ww{n})
:
\ww{s},\ww{n}\in\Ical,\ m\geq1
\right\}
\subseteq\Hfk_R
\]
and
\[
\mathscr{R}_{R,w}^\bullet
=
\Span_R
\left\{
\sA^\bullet(\ww{s};m;\ww{n})
:
\ww{s},\ww{n}\in\Ical,\ m\geq1,\ \wt(\ww{s})+mq+\wt(\ww{n})=w
\right\}
\subseteq\Hfk_{R,w}
\]
\end{df}
Then we have the following proposition.

\begin{prop}[{\cite[\S3]{CCM23}; see also \cite[Proposition~2.3.2]{Mis26}}]\label{prop:Mishiba-kernel}
For every $\bullet\in\{\zeta,\Li\}$,
$\ww{s},\ww{n}\in\Ical$, and $m\geq1$, we have
\[
\mathscr{L}_\infty^\bullet
\left(
\sA^\bullet(\ww{s};m;\ww{n})
\right)
=
0.
\]
Moreover, for all $w\geq 0$,
\[
\ker
\left(
\mathscr{L}_\infty^\bullet|_{\Hfk_{R,w}}:
\Hfk_{R,w}\longrightarrow\Zcal_{\infty,R}
\right)
=
\mathscr{R}_{R,w}^\bullet,\qquad
\ker
\left(
\mathscr{L}_\infty^\bullet:
\Hfk_R\longrightarrow\Zcal_{\infty,R}
\right)
=
\mathscr{R}_R^\bullet.
\]
\end{prop}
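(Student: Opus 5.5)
Proposition~\ref{prop:Mishiba-kernel} is quoted from \cite[\S3]{CCM23} and \cite[Proposition~2.3.2]{Mis26}, so the plan below reconstructs that argument from the results stated earlier. There are two parts: vanishing of each $\mathscr{L}_\infty^\bullet(\sA^\bullet(\ww{s};m;\ww{n}))$, and equality of the kernel with $\mathscr{R}_R^\bullet$.

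\emph{Vanishing.} I first reduce to $\ww{s}=\varnothing$ using the truncated sums. Write $\mathscr{L}_\infty^\bullet([\ww{s},\ww{t}])$ as a sum over $d$ of the depth-$\dep(\ww{s})$ sum with last variable $d$, times $\mathscr{L}^\bullet_{<d}(\ww{t})$. Here $\mathscr{L}^\bullet_{<d}$ is the analogous sum with all $d_i<d$. It then suffices to prove the finite-level identity
\[
\mathscr{L}_{<d}^\bullet\bigl(\sA^\bullet(\varnothing;m;\ww{n})\bigr)=\bigl(\text{boundary terms at level } d\bigr),
\]
where the boundary terms are exactly what the $\boxplus$ term and the $D_{s_r}$ term absorb once they are merged with $[\ww{s}]$ at degree $d$. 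The $\boxplus$ term merges the top degree; the $D$-term comes from the $q$-shuffle of two single power sums at the same degree, as in Definition~\ref{df:bullet-stuffle}.

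For the base identity, I use induction on $m$. The case $m=1$ rests on Thakur's relation $S_d(q)=L_1\,S_d(1)\,\zeta_{<d}(q-1)$-type identities; in the $\Li$ case it is $L_d^{-q}=L_1 L_d^{-1}\cdot L_{d-1}^{-(q-1)}$-type identities. These show that $\{q\}$ at level $d$ equals $L_1$ times $[1,(q-1)\ast^\bullet-]$ at level $d$, up to corrections. The product $\ast^\bullet$ appears through Theorem~\ref{thm:strict-bullet-character}. The inductive step iterates $\alpha_{1;(q-1)}^\bullet$.

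\emph{Kernel.} The inclusion $\mathscr{R}^\bullet_{R,w}\subseteq\ker$ follows from the vanishing. For the reverse, I show that $\Hfk_{R,w}=\mathscr{R}^\bullet_{R,w}+\Span_R\{[\ww{s}]:\ww{s}\in\Ical^{\mathrm T}_w\}$. The Thakur-index values are $R$-linearly independent by Theorem~\ref{thm:basis_for_infty}, so any kernel element reduces modulo $\mathscr{R}^\bullet$ to a combination of Thakur words. That combination has zero value and hence vanishes.

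For the spanning, I use a reduction algorithm. If a word contains an entry $>q$, or ends in an entry $\geq q$, I locate the offending position and write it as $[\ww{s},\{q\}^m,\ww{n}]\boxplus$-shape. Solving $\sA^\bullet=0$ for one term then expresses the word via words that are smaller in a well-founded order, such as the pair (number of non-Thakur positions, lexicographic depth). The $\FF_p(L_1)$ hypothesis on $R$ is used here only if a division by $L_1$ is needed. The main obstacle is choosing this order so that every term of $\sA^\bullet$ besides the leading one, including the $L_1^m\alpha^{\bullet,m}$ and $D$ terms, is strictly smaller. I would follow Ngo Dac's and CCM's ordering: weight is fixed, and one compares first the "initial $q$-block" and then depth.
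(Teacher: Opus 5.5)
The paper gives no proof of its own here; it quotes \cite[\S3]{CCM23} and \cite[Proposition~2.3.2]{Mis26}. Your two-part outline has the right shape, but the identity your vanishing argument rests on is false as stated. At $d=1$ one has $S_1(q)=L_1^{-q}$, whereas $L_1S_1(1)S_0(q-1)=1$, and your $\Li$ version fails the same way. The correct identity is shifted by one degree: for $d\geq1$,
\[
L_1\,\mathscr{S}^\bullet_d(1)\sum_{e<d}\mathscr{S}^\bullet_e(q-1)=\mathscr{S}^\bullet_{d-1}(q),
\qquad\text{equivalently}\qquad
\sum_{e<d}L_e^{1-q}=\frac{L_d}{L_1L_{d-1}^{q}}.
\]
This telescopes from $\theta-\theta^{q^{d+1}}=L_1+(\theta-\theta^{q^d})^q$, and it holds in both realizations since $1,q-1,q\leq q$. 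Write $\mathscr{L}_{<d}$ for $\mathscr{L}^\bullet_\infty$ truncated to degrees $<d$. You also need the degree-wise product rule $\mathscr{L}_{<d}(P\ast^\bullet Q)=\mathscr{L}_{<d}(P)\mathscr{L}_{<d}(Q)$, which is Chen's formula at a fixed degree. \cref{thm:strict-bullet-character} does not give this, and it is needed because $\alpha$ nests $\ast^\bullet$ inside a concatenation. With both in hand, induction on $m$ yields the exact identity
\[
L_1^m\,\mathscr{L}_{<d+1}\bigl(\alpha^m(\ww{n})\bigr)=\mathscr{L}_{<d}\bigl(P_m^\bullet(\ww{n})\bigr)\qquad(d\geq0).
\]
For $m=1$, the $\boxplus$- and $D_q$-terms of $P^\bullet_1(\ww{n})$ arise from splitting $\mathscr{L}_{<e+1}(\ww{n})=\mathscr{L}_{<e}(\ww{n})+\mathscr{S}^\bullet_e(n_1)\mathscr{L}_{<e}(\ww{n}^-)$ and expanding $\mathscr{S}^\bullet_e(q)\mathscr{S}^\bullet_e(n_1)$ at degree $e$. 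For $m\geq2$ one uses $P^\bullet_m(\ww{n})=[q,P^\bullet_{m-1}(\ww{n})]$, so the shift does not accumulate and there are no correction terms. The same splitting shows that $\mathscr{L}^\bullet_\infty(B^\bullet_{\ww{s}}(Q))$ is obtained from $\mathscr{L}^\bullet_\infty([\ww{s},Q])$ by replacing $\mathscr{L}_{<d}(Q)$ with $\mathscr{L}_{<d+1}(Q)$, where $d$ is the degree attached to $s_r$. Hence \eqref{eq:coaction-A-PB} and \eqref{eq:coaction-A-empty} (letting $d\to\infty$ in the latter) give the vanishing at once. This one-degree shift is exactly why $\sA^\bullet$ pairs plain concatenation with $P^\bullet_m$ but $B^\bullet_{\ww{s}}$ with $\alpha^m$. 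Once it is in place, your boundary-term picture is precisely right: $\mathscr{L}_{<d}(\sA^\bullet(\varnothing;m;\ww{n}))$ is the part of $L_1^m\alpha^m(\ww{n})$ of leading degree exactly $d$.

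For the kernel, deducing equality from the spanning statement $\Hfk_{R,w}=\mathscr{R}^\bullet_{R,w}+\Span_R\{[\ww{s}]:\ww{s}\in\Ical^{\mathrm{T}}_w\}$ together with \cref{thm:basis_for_infty} is the right logic. However, that spanning statement is the real combinatorial content of the cited work, and you leave it open. The order you float does not work. Eliminating $[q,a]$ with $a\geq q+2$ through $\sA^\bullet((q);1;(a-q))$ produces $-L_1[q+1,a-1]$ (from $[q]\boxplus[1,(q-1)\ast^\bullet(a-q)]$), which has two non-Thakur positions instead of one. Eliminating it instead through $\sA^\zeta(\varnothing;1;(a))$ brings in $D_q(a)$, whose words $[q+a-j,j]$ can have both entries $>q$. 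So you must specify and verify a genuinely different well-founded order, one that controls both the entry $s_r+1$ created by $\boxplus$ and the $D$-terms. Two smaller points. First, the ungraded equality $\ker(\mathscr{L}^\bullet_\infty)=\mathscr{R}^\bullet_R$ also needs the direct sum $\Zcal_{\infty,R}=\bigoplus_w\Zcal_{\infty,R,w}$ from \cite[Theorem~2.2.1]{Cha14}, since \cref{thm:basis_for_infty} is weight by weight (each $\sA^\bullet(\ww{s};m;\ww{n})$ being homogeneous). Second, no inversion of $L_1$ is needed here: the proposition holds over $\FF_p[L_1]$-subalgebras, as in \cite{Mis26}, and the $\FF_p(L_1)$ hypothesis of this paper enters only through \cref{lem:Euler-Carlitz-L1}.
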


\section{Evaluation Maps and Deconcatenation}\label{sec:formalism}
In this section, we study evaluation maps together with the
deconcatenation map. We first compare the ordinary and
$\star$-inverse evaluations. We then study the behavior of the
relations $\sA^\bullet(\ww{s};m;\ww{n})$ under deconcatenation.
These results will be used in the next section to construct the coaction on $\infty$-adic multiple zeta values.
\subsection{Evaluation Maps and the \texorpdfstring{$\star$}{star}-Inverse}\label{sec:evaluation-deconcatenation}
Throughout this section, let $\bullet\in\{\zeta,\Li\}$, let $B$ be
a commutative unital $R$-algebra, and let
$\ev,\ev^\ant:\Hfk_R\to B$ be $R$-linear maps. Put
$\varepsilon_\zeta=1$ and $\varepsilon_\Li=0$. We consider the
following conditions:
\begin{enumerate}[label=\textup{(C\arabic*)},ref=\textup{C\arabic*}]
\item\label{cond:unital}
$\ev(1)=\ev^\ant(1)=1$.
\item\label{cond:convolution}
For any non-empty index $\ww{s}\in\Ical$,
\[
m_B(\ev\otimes\ev^\ant)\Delta([\ww{s}])=0
=m_B(\ev^\ant\otimes\ev)\Delta([\ww{s}]),
\]
where $m_B$ is the multiplication map of $B$.
\item\label{cond:vanishing}
When $\bullet=\zeta$, we have $\ev^\ant([j])=0$ whenever $q-1\mid j$, while for $\bullet=\Li$, we have $\ev^\ant([q-1])=0$.
\item\label{cond:ev-product}
$\ev(P\ast^\bullet Q)=\ev(P)\ev(Q)$ for all $P,Q\in\Hfk_R$.
\item\label{cond:ant-product}
$\ev^\ant(P\ast^\bullet Q)=\ev^\ant(P)\ev^\ant(Q)$ for all
$P,Q\in\Hfk_R$.
\end{enumerate}

We then define
\[
\Rcal=(\id\otimes\ev^\ant)\Delta:
\Hfk_R\longrightarrow\Hfk_R\otimes_RB
\]
and equip $\Hfk_R\otimes_RB$ with the product
\[
(P\otimes x)\star^\bullet(Q\otimes y)
=(P\ast^\bullet Q)\otimes xy
\]
for any $P,Q\in\Hfk_R$ and $x,y\in B$.
\begin{lem}\label{lem:R-concatenation}
Let $\ww{s}=(s_1,\ldots,s_r)\in\Ical$ be a non-empty index and
$P\in\Hfk_R$. Then
\[
\begin{aligned}
\Rcal([\ww{s},P])
={}&
1\otimes\ev^\ant([\ww{s},P])+
\sum_{i=1}^{r-1}
[\ww{s}[:i]]
\otimes
\ev^\ant([\ww{s}[i+1:],P])+
([\ww{s},-]\otimes\id)\Rcal(P),
\end{aligned}
\]
where $[\ww{s},-]$ denotes left concatenation by $\ww{s}$.
In particular, if $\dep(\ww{s})=1$, say $\ww{s}=(s)$, then
\begin{equation}\label{eq:R-left-letter}
\Rcal([s,P])
=
1\otimes\ev^\ant([s,P])
+
([s,-]\otimes\id)\Rcal(P).
\end{equation}
\end{lem}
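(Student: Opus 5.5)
By linearity in $P$, it suffices to take $P=[\ww{n}]$ with $\ww{n}=(n_1,\ldots,n_\ell)\in\Ical$ an index, possibly empty. Both sides of the claimed identity are $R$-linear in $P$: the left side because $\Rcal$ and concatenation are linear, the right side because $\ev^\ant$, concatenation and $\Rcal$ are. So there is no loss in this reduction. The plan is then to expand $\Rcal([\ww{s},\ww{n}])=(\id\otimes\ev^\ant)\Delta([s_1,\ldots,s_r,n_1,\ldots,n_\ell])$ directly from the definition of the deconcatenation map.

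The concatenated index $[\ww{s},\ww{n}]$ has depth $r+\ell$, so
\[
\Delta([\ww{s},\ww{n}])=\sum_{k=0}^{r+\ell}[\ww{s},\ww{n}][:k]\otimes[\ww{s},\ww{n}][k+1:].
\]
I will split the range of the cut position $k$ into three pieces.

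\textbf{The term $k=0$.} It gives $1\otimes[\ww{s},\ww{n}]$, which becomes $1\otimes\ev^\ant([\ww{s},P])$ after applying $\id\otimes\ev^\ant$.

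\textbf{The terms $1\leq k\leq r-1$.} Here the cut falls strictly inside $\ww{s}$, so the term is $[\ww{s}[:k]]\otimes[\ww{s}[k+1:],\ww{n}]$. This produces the middle sum.

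\textbf{The terms $r\leq k\leq r+\ell$.} Write $k=r+j$ with $0\leq j\leq\ell$. The term is $[\ww{s},\ww{n}[:j]]\otimes[\ww{n}[j+1:]]$, which equals $([\ww{s},-]\otimes\id)\bigl([\ww{n}[:j]]\otimes[\ww{n}[j+1:]]\bigr)$. Summing over $j$ gives $([\ww{s},-]\otimes\id)\Delta([\ww{n}])$. Applying $\id\otimes\ev^\ant$, which commutes with $[\ww{s},-]\otimes\id$ since the two act on different tensor factors, gives $([\ww{s},-]\otimes\id)\Rcal(P)$.

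The only care needed is the bookkeeping at the boundary $k=r$, which must be counted exactly once: it belongs to the third group, as the term $j=0$, namely $[\ww{s}]\otimes[\ww{n}]$. The empty-word conventions also need checking. When $\ww{n}=\varnothing$, the third group is just $[\ww{s}]\otimes 1$, and this matches $([\ww{s},-]\otimes\id)\Rcal(1)=[\ww{s}]\otimes\ev^\ant(1)$. Note that this step uses only the linear structure; condition (\ref{cond:unital}) is not needed here, because $\Rcal(1)=1\otimes\ev^\ant(1)$ holds by definition. Also $[\ww{s},\varnothing]=[\ww{s}]$ by the concatenation conventions. With these checks the three groups account for every $k$ exactly once, proving the formula.

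The special case \eqref{eq:R-left-letter} follows immediately: when $r=1$ the middle sum $\sum_{i=1}^{r-1}$ is empty. The argument is purely combinatorial, and I expect no real obstacle beyond this index bookkeeping.
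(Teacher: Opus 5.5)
Your proposal is correct and follows the paper's proof: reduce by $R$-linearity to $P=[\ww{n}]$, then split the deconcatenation of $[\ww{s},\ww{n}]$ by whether the cut is at position $0$, strictly inside $\ww{s}$, or at or after the end of $\ww{s}$, with the last group reassembling into $([\ww{s},-]\otimes\id)\Rcal(\ww{n})$. Your explicit treatment of the boundary cut $k=r$ and of the case $\ww{n}=\varnothing$ fills in bookkeeping that the paper leaves implicit.
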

\begin{proof}
By $R$-linearity, it suffices to prove the assertion for
$P=[\ww{n}]$, where $\ww{n}\in\Ical$. The result follows from the fact that
\[
\begin{aligned}
\Rcal([\ww{s},\ww{n}])
={}&
1\otimes\ev^\ant([\ww{s},\ww{n}])+
\sum_{i=1}^{r-1}
[\ww{s}[:i]]
\otimes
\ev^\ant([\ww{s}[i+1:],\ww{n}])\\&\hspace{44mm}+
\sum_{\ww{n}=[A,B]}
[\ww{s},A]\otimes\ev^\ant(B)
\end{aligned}
\]
and
$\Rcal(\ww{n})
=
\sum_{\ww{n}=[A,B]}
A\otimes\ev^\ant(B)$.
\end{proof}

\begin{lem}\label{lem:bullet-stuffle-transfer}
For $\bullet=\Li$, assume Conditions
\ref{cond:unital}, \ref{cond:convolution}, and
\ref{cond:ev-product}. For $\bullet=\zeta$, assume Conditions
\ref{cond:unital}--\ref{cond:ev-product}.
Then for any $P,Q\in\Hfk_R$, we have
\[
\Rcal(P\ast^\bullet Q)
=\Rcal(P)\star^\bullet\Rcal(Q).
\]
In particular, Condition \ref{cond:ant-product} holds.
\end{lem}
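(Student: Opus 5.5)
We prove $\Rcal(P\ast^\bullet Q)=\Rcal(P)\star^\bullet\Rcal(Q)$ by induction on $\dep(\ww{s})+\dep(\ww{n})$ for basis elements $P=[\ww{s}]$, $Q=[\ww{n}]$; by bilinearity this suffices. If either index is empty the claim is immediate from \ref{cond:unital}, since $\Rcal(1)=1\otimes1$ is the unit for $\star^\bullet$. For non-empty $\ww{s},\ww{n}$ we expand $\ww{s}\ast^\bullet\ww{n}$ by the recursion \eqref{eq:bullet-stuffle}. Every term except $D^\bullet_{\ww{s}}(\ww{n})$ has the form $[c,X]$ with $c\in\{s_1,n_1,s_1+n_1\}$ and $X$ a product of lower total depth. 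We then apply \eqref{eq:R-left-letter} to each such term:
\[
\Rcal([c,X])=1\otimes\ev^\ant([c,X])+([c,-]\otimes\id)\Rcal(X).
\]
By the induction hypothesis, $\Rcal(X)$ factors as a $\star^\bullet$-product, for instance $\Rcal(\ww{s}^-)\star^\bullet\Rcal(\ww{n})$.

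On the other side, we expand $\Rcal(\ww{s})\star^\bullet\Rcal(\ww{n})$ using \eqref{eq:R-left-letter}:
\[
\Rcal(\ww{s})=1\otimes\ev^\ant(\ww{s})+([s_1,-]\otimes\id)\Rcal(\ww{s}^-),
\]
and similarly for $\ww{n}$. This gives four groups of terms:
\begin{itemize}
\item The term $(1\otimes\ev^\ant(\ww{s}))\star^\bullet(1\otimes\ev^\ant(\ww{n}))$.
\item Two cross terms of the form $([s_1,-]\otimes\id)\Rcal(\ww{s}^-)\star^\bullet(1\otimes\ev^\ant(\ww{n}))$ and its mirror.
\item The term $([s_1,A]\otimes x)\star^\bullet([n_1,B]\otimes y)$.
\end{itemize}
The last term is expanded again by \eqref{eq:bullet-stuffle} at the level of $A,B$. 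Its $[s_1,A\ast B]$-, $[n_1,\ldots]$- and $[s_1+n_1,\ldots]$-parts should match the concatenated parts $([c,-]\otimes\id)\Rcal(X)$ coming from the left side, except for the pieces where $A$ or $B$ is empty. For $\bullet=\Li$, the remaining discrepancy is a collection of terms of the form $1\otimes(\cdots)$ together with mixed terms. These should cancel precisely because of \ref{cond:convolution}: the relation $\sum\ev(\cdot)\ev^\ant(\cdot)=0$ lets one rewrite $\ev^\ant$ of a stuffle product in terms of products of $\ev^\ant$'s. Concretely, I expect the identity needed at the $1\otimes B$ level to be $\ev^\ant(\ww{s}\ast\ww{n})=\ev^\ant(\ww{s})\ev^\ant(\ww{n})$, that is, the ``in particular'' statement itself.

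This suggests organizing the argument as follows. First derive \ref{cond:ant-product} inside the same induction. Apply $m_B\circ(\ev\otimes\id)$ to the identity at lower depth and use \ref{cond:ev-product} and \ref{cond:convolution}. Since $(\ev\otimes\id)\Rcal=m_B(\ev\otimes\ev^\ant)\Delta$ vanishes on non-empty indices and $\ev^\ant$ is recovered from the $1\otimes(\cdot)$ component, comparing $1\otimes$-components expresses $\ev^\ant(\ww{s}\ast\ww{n})$ through lower-depth data. Equivalently, $\ev^\ant$ is the convolution inverse of the character $\ev$ for the coalgebra $(\Hfk_R,\Delta)$. Because deconcatenation together with $\ast^\Li$ forms a bialgebra, the inverse of a character is a character. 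I would prove the stuffle case this way: $(\Hfk_R,\ast^\Li,\Delta)$ is the quasi-shuffle Hopf algebra, and $\Rcal$ is the right coaction twisted by the character $\ev^\ant$, hence multiplicative.

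For $\bullet=\zeta$, the $q$-shuffle $\ast^\zeta$ is not compatible with $\Delta$ on the nose. The failure is the extra term $D_{\ww{s}}(\ww{n})$, which involves letters $[s_1+n_1-j,[j]\ast\cdots]$ with $q-1\mid j$. The main step, and the main obstacle, is to show that under $\Rcal$ these $D$-terms behave correctly. On one hand, $\Rcal(D_{\ww{s}}(\ww{n}))$ produces terms where deconcatenation cuts inside $[j]\ast(\cdots)$. On the other hand, the product $([s_1,A]\otimes x)\star^\zeta([n_1,B]\otimes y)$ produces $D_{[s_1,A]}([n_1,B])$. I would first apply the induction hypothesis to $\Rcal([j]\ast^\zeta(\ww{s}^-\ast\ww{n}^-))$ and use $\Rcal([j])=[j]\otimes1+1\otimes\ev^\ant([j])$. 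By \ref{cond:vanishing}, every $j$ with $\Delta^{[j]}_{s_1,n_1}\neq0$ satisfies $q-1\mid j$, so $\ev^\ant([j])=0$ and $\Rcal([j])=[j]\otimes1$. The letter $[j]$ therefore never lands on the $B$-side, and the cut-inside terms reduce to $D_{[s_1,A]}([n_1,B])\otimes xy$ summed over deconcatenations, plus $1\otimes\ev^\ant(D_{\ww{s}}(\ww{n}))$, which is absorbed by the character property obtained simultaneously. I expect the careful bookkeeping of cuts in the case where $A$ or $B$ is empty to be the delicate part, since $D_{\ww{s}}(1)=0$ must be used there.
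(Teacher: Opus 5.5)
Your proof is correct. For $\bullet=\zeta$ it is essentially the paper's argument: induction on $\dep(\ww{s})+\dep(\ww{n})$, \eqref{eq:R-left-letter} for each leading letter, and the induction hypothesis on $\Rcal([j]\ast^\zeta(\ww{s}^-\ast^\zeta\ww{n}^-))$ with $\Rcal([j])=[j]\otimes1$ from Condition~\ref{cond:vanishing}. It then kills the constant part with $m_B(\ev\otimes\id)$, using Conditions~\ref{cond:convolution} and \ref{cond:ev-product}.

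The bookkeeping you flag as delicate disappears with the right grouping. Let $A$, $C$ be prefixes of $\ww{s}^-$, $\ww{n}^-$. Group each cross term with the $[s_1,A\ast^\bullet[n_1,C]]$ part (respectively the $[n_1,[s_1,A]\ast^\bullet C]$ part) of the recursion for $[s_1,A]\ast^\bullet[n_1,C]$. This gives exactly $([s_1,-]\otimes\id)(\Rcal(\ww{s}^-)\star^\bullet\Rcal(\ww{n}))$, respectively its mirror. So empty prefixes need no separate treatment, $D_{\ww{s}}(1)=0$ is never used, and no mixed terms are left over. The discrepancy is precisely $1\otimes(\ev^\ant(\ww{s}\ast^\bullet\ww{n})-\ev^\ant(\ww{s})\ev^\ant(\ww{n}))$. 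You should apply $m_B(\ev\otimes\id)$ to this current-depth identity, not to the lower-depth one, which only gives $0=0$.

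You genuinely depart from the paper for $\bullet=\Li$. You use that $(\Hfk_R,\ast^\Li,\Delta)$ is Hoffman's quasi-shuffle bialgebra; this holds over any base ring, since the compatibility is integral. You then use that the convolution inverse of a character into the commutative algebra $B$ is again a character, so $\Rcal=(\id\otimes\ev^\ant)\Delta$ is a composite of algebra maps. This is sound and more conceptual, since it identifies $\ev^\ant$ with $\ev\circ S$ for the quasi-shuffle antipode $S$. But it imports an external theorem and cannot reach $\ast^\zeta$, where $\Delta$ is not multiplicative: deconcatenation can place the letter $[j]$ of $D_{\ww{s}}(\ww{n})$ on the right. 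The paper instead runs one induction, with the $D$-terms switched on or off by $\varepsilon_\bullet$. So the $\Li$ case falls out of the same computation you need for $\zeta$ anyway, and Condition~\ref{cond:ant-product} is obtained together with the multiplicativity of $\Rcal$ rather than beforehand.
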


\begin{proof}
By bilinearity, it is enough to take $P=[\ww{s}]$ and $Q=[\ww{n}]$,
where $\ww{s},\ww{n}\in\Ical$. We use induction on
$\dep(\ww{s})+\dep(\ww{n})$. If one of the indices is empty, the assertion follows immediately
from Condition \ref{cond:unital}. For non-empty $\ww{s},\ww{n}\in\Ical$, write
$
[\ww{s}]=[s_1,\ww{s}^-]$ and 
$[\ww{n}]=[n_1,\ww{n}^-]
$. 
Recall that
\begin{align*}
\ww{s}\ast^\bullet\ww{n}
={}&
[s_1,\ww{s}^-\ast^\bullet\ww{n}]+
[n_1,\ww{s}\ast^\bullet\ww{n}^-]+
[s_1+n_1,\ww{s}^-\ast^\bullet\ww{n}^-]
\label{eq:stuffle-third}\\&+
\varepsilon_\bullet
\sum_{1\leq j<s_1+n_1}
\Delta_{s_1,n_1}^{[j]}
[s_1+n_1-j,
[j]\ast^\zeta(\ww{s}^-\ast^\zeta\ww{n}^-)].
\end{align*}
Write
\[
\Rcal(\ww{s}^-)
=
\sum_{\ww{s}^-=[A,B]}A\otimes\ev^\ant(B),
\qquad
\Rcal(\ww{n}^-)
=
\sum_{\ww{n}^-=[C,D]}C\otimes\ev^\ant(D).
\]
Then
\[
\Rcal(\ww{s})
=
1\otimes\ev^\ant(\ww{s})
+
\sum_{\ww{s}^-=[A,B]}
[s_1,A]\otimes\ev^\ant(B),
\]
and similarly
\[
\Rcal(\ww{n})
=
1\otimes\ev^\ant(\ww{n})
+
\sum_{\ww{n}^-=[C,D]}
[n_1,C]\otimes\ev^\ant(D).
\]
Hence,
$\Rcal(\ww{s})\star^\bullet\Rcal(\ww{n})$
consists of the term
\begin{equation}\label{eq:idconstant}
1\otimes \ev^{\ant}(\ww{s})\ev^{\ant}(\ww{n}),
\end{equation}
the two cross terms
\begin{equation}\label{eq:cross-s}
\sum_{\ww{s}^-=[A,B]}
[s_1,A]\otimes
\ev^\ant(B)\ev^\ant(\ww{n}),
\end{equation}
and
\begin{equation}\label{eq:cross-n}
\sum_{\ww{n}^-=[C,D]}
[n_1,C]\otimes
\ev^\ant(\ww{s})\ev^\ant(D),
\end{equation}
together with
\begin{equation}\label{eq:prefix-product}
\sum_{\substack{\ww{s}^-=[A,B]\\\ww{n}^-=[C,D]}}
([s_1,A]\ast^\bullet[n_1,C])
\otimes
\ev^\ant(B)\ev^\ant(D).
\end{equation}

Applying the recursion to the first tensor factor in
\eqref{eq:prefix-product}, it becomes the sum of the following three terms:
\begin{align}
&\sum_{\substack{\ww{s}^-=[A,B]\\\ww{n}^-=[C,D]}}
[s_1,A\ast^\bullet[n_1,C]]
\otimes\ev^\ant(B)\ev^\ant(D),
\label{eq:prefix-first}\\
&\sum_{\substack{\ww{s}^-=[A,B]\\\ww{n}^-=[C,D]}}
[n_1,[s_1,A]\ast^\bullet C]
\otimes\ev^\ant(B)\ev^\ant(D),
\label{eq:prefix-second}\\
&\sum_{\substack{\ww{s}^-=[A,B]\\\ww{n}^-=[C,D]}}
[s_1+n_1,A\ast^\bullet C]
\otimes\ev^\ant(B)\ev^\ant(D),
\label{eq:prefix-third}
\end{align}
together with 
\begin{equation}\label{eq:prefix-correction}
\begin{aligned}
\varepsilon_\bullet
\sum_{1\leq j<s_1+n_1}
\Delta_{s_1,n_1}^{[j]}
\sum_{\substack{\ww{s}^-=[A,B]\\\ww{n}^-=[C,D]}}
&[s_1+n_1-j,
[j]\ast^\zeta(A\ast^\zeta C)]\otimes\ev^\ant(B)\ev^\ant(D).
\end{aligned}
\end{equation}
Notice that the sum of \eqref{eq:cross-s} and \eqref{eq:prefix-first}
becomes
\[
\begin{aligned}
&\sum_{\ww{s}^-=[A,B]}
[s_1,A]\otimes\ev^\ant(B)\ev^\ant(\ww{n})+
\sum_{\substack{\ww{s}^-=[A,B]\\\ww{n}^-=[C,D]}}
[s_1,A\ast^\bullet[n_1,C]]
\otimes\ev^\ant(B)\ev^\ant(D)\\
&=
([s_1,-]\otimes\id)
\left(
\Rcal(\ww{s}^-)\star^\bullet\Rcal(\ww{n})
\right)\\
&=
([s_1,-]\otimes\id)
\Rcal(\ww{s}^-\ast^\bullet\ww{n})\\
&=
\Rcal
\left(
[s_1,\ww{s}^-\ast^\bullet\ww{n}]
\right)-1\otimes \ev^{\ant}\left(
[s_1,\ww{s}^-\ast^\bullet\ww{n}]
\right),
\end{aligned}
\]
where the second equality follows from the induction hypothesis and
the last equality follows from \eqref{eq:R-left-letter}. 
Similarly, the sum of \eqref{eq:cross-n} and
\eqref{eq:prefix-second} gives
\[
\begin{aligned}
&\sum_{\ww{n}^-=[C,D]}
[n_1,C]\otimes\ev^\ant(\ww{s})\ev^\ant(D)+
\sum_{\substack{\ww{s}^-=[A,B]\\\ww{n}^-=[C,D]}}
[n_1,[s_1,A]\ast^\bullet C]
\otimes\ev^\ant(B)\ev^\ant(D)\\
&=
([n_1,-]\otimes\id)
\left(
\Rcal(\ww{s})\star^\bullet\Rcal(\ww{n}^-)
\right)\\
&=
([n_1,-]\otimes\id)
\Rcal(\ww{s}\ast^\bullet\ww{n}^-)\\
&=
\Rcal
\left(
[n_1,\ww{s}\ast^\bullet\ww{n}^-]
\right)-1\otimes \ev^{\ant}\left(
[n_1,\ww{s}\ast^\bullet\ww{n}^-]
\right).
\end{aligned}
\]
For \eqref{eq:prefix-third}, by \eqref{eq:R-left-letter} and the induction hypothesis, we have
\[
\begin{aligned}
&\sum_{\substack{\ww{s}^-=[A,B]\\\ww{n}^-=[C,D]}}
[s_1+n_1,A\ast^\bullet C]
\otimes\ev^\ant(B)\ev^\ant(D)\\
&=
([s_1+n_1,-]\otimes\id)
\left(
\Rcal(\ww{s}^-)\star^\bullet\Rcal(\ww{n}^-)
\right)\\
&=
([s_1+n_1,-]\otimes\id)
\Rcal(\ww{s}^-\ast^\bullet\ww{n}^-)\\
&=
\Rcal
\left(
[s_1+n_1,\ww{s}^-\ast^\bullet\ww{n}^-]
\right)-1\otimes \ev^{\ant}\left(
[s_1+n_1,\ww{s}^-\ast^\bullet\ww{n}^-]
\right).
\end{aligned}
\]
Now, if $\bullet=\zeta$, note that for every $j$ with $\Delta_{s_1,n_1}^{[j]}\neq0$, we have
$q-1\mid j$. Hence, by Condition \ref{cond:vanishing},
\[
\Rcal([j])
=
1\otimes\ev^\ant([j])+[j]\otimes1
=
[j]\otimes1.
\]
Applying the induction hypothesis gives
\[
\begin{aligned}
&\Rcal\left(
[j]\ast^\zeta(\ww{s}^-\ast^\zeta\ww{n}^-)
\right)\\
&=
\Rcal([j])
\star^\zeta
\Rcal(\ww{s}^-\ast^\zeta\ww{n}^-)\\
&=
([j]\otimes1)
\star^\zeta
\left(
\Rcal(\ww{s}^-)\star^\zeta\Rcal(\ww{n}^-)
\right)\\
&=
\sum_{\substack{\ww{s}^-=[A,B]\\\ww{n}^-=[C,D]}}
\left(
[j]\ast^\zeta(A\ast^\zeta C)
\right)
\otimes\ev^\ant(B)\ev^\ant(D).
\end{aligned}
\]
Therefore
\eqref{eq:prefix-correction} becomes
\[
\begin{aligned}
&\sum_{1\leq j<s_1+n_1}
\Delta_{s_1,n_1}^{[j]}\sum_{\substack{\ww{s}^-=[A,B]\\\ww{n}^-=[C,D]}}
[s_1+n_1-j,
[j]\ast^\zeta(A\ast^\zeta C)]
\otimes\ev^\ant(B)\ev^\ant(D)\\
&=
\sum_{1\leq j<s_1+n_1}
\Delta_{s_1,n_1}^{[j]}([s_1+n_1-j,-]\otimes\id)
\Rcal\left(
[j]\ast^\zeta(\ww{s}^-\ast^\zeta\ww{n}^-)
\right)\\
&=
\Rcal\left(\sum_{1\leq j<s_1+n_1}
\Delta_{s_1,n_1}^{[j]}
[s_1+n_1-j,
[j]\ast^\zeta(\ww{s}^-\ast^\zeta\ww{n}^-)]
\right)\\
&\quad-1\otimes\ev^{\ant}\left(\sum_{1\leq j<s_1+n_1}
\Delta_{s_1,n_1}^{[j]}
[s_1+n_1-j,
[j]\ast^\zeta(\ww{s}^-\ast^\zeta\ww{n}^-)]
\right),
\end{aligned}
\]
where the last equality follows from \eqref{eq:R-left-letter}.

Therefore, for either $\bullet\in\{\zeta,\Li\}$, we see that 
\begin{align}\label{eq:difference}
\Rcal(\ww{s}\ast^\bullet\ww{n})
-
\Rcal(\ww{s})\star^\bullet\Rcal(\ww{n})
=
1\otimes
\left(
\ev^\ant(\ww{s}\ast^\bullet\ww{n})
-
\ev^\ant(\ww{s})\ev^\ant(\ww{n})
\right).
\end{align}
It remains to show that the right-hand side of \eqref{eq:difference}
vanishes. Apply $m_B(\ev\otimes\id)$ to both sides of
\eqref{eq:difference}. By Condition \ref{cond:convolution},
\[
m_B(\ev\otimes\id)
\Rcal(\ww{s}\ast^\bullet\ww{n})=0.
\]
On the other hand, by Condition \ref{cond:ev-product},
$m_B(\ev\otimes\id)$ is multiplicative with respect to
$\star^\bullet$. Hence, Condition \ref{cond:convolution} implies that
\[
\begin{aligned}
m_B(\ev\otimes\id)
\left(
\Rcal(\ww{s})\star^\bullet\Rcal(\ww{n})
\right)=
m_B(\ev\otimes\id)\Rcal(\ww{s})\,
m_B(\ev\otimes\id)\Rcal(\ww{n})
=0.
\end{aligned}
\]
Therefore, applying $m_B(\ev\otimes\id)$ to \eqref{eq:difference}
gives Condition \ref{cond:ant-product}
\[
\ev^\ant(\ww{s}\ast^\bullet\ww{n})
=
\ev^\ant(\ww{s})\ev^\ant(\ww{n}).
\]
Substituting this into \eqref{eq:difference}, we obtain
\[
\Rcal(\ww{s}\ast^\bullet\ww{n})
=
\Rcal(\ww{s})\star^\bullet\Rcal(\ww{n}),
\]
which completes the proof.
\end{proof}

\begin{lem}\label{lem:degree-convolution}
For every $s\in\NN$ and $d\geq0$, fix $\mathscr{S}_d(s)\in B$, and fix $D\in\NN\cup\{\infty\}$. Assume that the following sums are
well-defined in $B$. Define the $R$-linear maps $\ev,\ev^\ant$ by
$\ev(1)=\ev^\ant(1)=1$ and, for every non-empty
$\ww{s}=(s_1,\ldots,s_r)$, by
\begin{align*}
\ev([\ww{s}])
&=\sum_{D>d_1>\cdots>d_r\geq0}
\prod_{k=1}^r\mathscr{S}_{d_k}(s_k),\\
\ev^\ant([\ww{s}])
&=(-1)^r\sum_{0\leq d_1\leq\cdots\leq d_r<D}
\prod_{k=1}^r\mathscr{S}_{d_k}(s_k).
\end{align*}
Then Conditions \ref{cond:unital} and \ref{cond:convolution} hold.
\end{lem}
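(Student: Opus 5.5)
Condition \ref{cond:unital} holds by definition, so the work is in Condition \ref{cond:convolution}. The plan is to prove both identities in \ref{cond:convolution} with a telescoping (sign-reversing) cancellation. We fix a non-empty index $\ww{s}=(s_1,\ldots,s_r)$, expand each term of the deconcatenation sum as a sum over degree tuples, and pair the terms off.

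For the first identity, expand
\[
m_B(\ev\otimes\ev^\ant)\Delta([\ww{s}])
=\sum_{i=0}^r \ev([\ww{s}[:i]])\,\ev^\ant([\ww{s}[i+1:]]).
\]
The $i$-th term equals $(-1)^{r-i}$ times the sum of $\prod_{k=1}^r\mathscr{S}_{d_k}(s_k)$ over tuples with
\[
D>d_1>\cdots>d_i\geq0,\qquad 0\leq d_{i+1}\leq\cdots\leq d_r<D.
\]
There is no condition linking $d_i$ and $d_{i+1}$. Let $T_i$ denote the set of such tuples. Split $T_i$ according to the comparison between $d_i$ and $d_{i+1}$:
\[
T_i=T_i^{>}\sqcup T_i^{\leq},
\]
where $T_i^{>}$ consists of the tuples with $d_i>d_{i+1}$ (meaningful for $1\leq i\leq r-1$) and $T_i^{\leq}$ of those with $d_i\leq d_{i+1}$. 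At the ends we set $T_0=T_0^{>}$ (when $i=0$ there is no $d_i$) and $T_r=T_r^{\leq}$ (when $i=r$ there is no $d_{i+1}$).

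The key observation is that $T_i^{>}=T_{i+1}^{\leq}$ for $0\leq i\leq r-1$. Both sets describe tuples with
\[
D>d_1>\cdots>d_{i+1},\qquad d_{i+1}\leq\cdots\leq d_r<D.
\]
In the first the condition $d_{i+1}\leq d_{i+2}$ comes from the $\ev^\ant$ chain and $d_i>d_{i+1}$ is the extra condition; in the second the roles are exchanged. The product of $\mathscr{S}$'s is the same on both sides, and the signs are $(-1)^{r-i}$ and $(-1)^{r-i-1}$. So every block cancels against its neighbour and the total is $0$.

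The second identity,
\[
\sum_i\ev^\ant([\ww{s}[:i]])\,\ev([\ww{s}[i+1:]])=0,
\]
is handled symmetrically. Now the tuples satisfy
\[
0\leq d_1\leq\cdots\leq d_i<D,\qquad D>d_{i+1}>\cdots>d_r\geq0,
\]
and we split according to whether $d_i\leq d_{i+1}$ or $d_i>d_{i+1}$. The piece of term $i$ with $d_i\leq d_{i+1}$ coincides with the piece of term $i+1$ with $d_{i+1}>d_{i+2}$, with opposite signs. The only care needed is to get the strict/weak inequality conventions exactly complementary, so that the pieces match bijectively.

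The step I expect to need the most attention is the endpoint bookkeeping: the $i=0$ and $i=r$ terms must each consist of a single piece, and the bound $d<D$ must appear on both sides of each matching (including $D=\infty$). The manipulations are rearrangements of sums, which are justified by the standing assumption that all sums are well-defined, for instance absolutely convergent in $B$ for $D=\infty$. If this is in doubt, I would first prove the identity for finite $D$ and then pass to the limit.
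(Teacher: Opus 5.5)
Your proposal is correct and is essentially the paper's argument. The paper packages your piece $(-1)^{r-i}\sum_{T_i^{\leq}}$ as $H_i$ and writes the $i$-th term as $H_i-H_{i+1}$ (your identity $T_i^{>}=T_{i+1}^{\leq}$), then telescopes using $H_0=H_{r+1}=0$; it dismisses the second identity with ``similarly,'' exactly as your symmetric matching does.
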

\begin{proof}
Condition \ref{cond:unital} follows immediately from
$\ev(1)=\ev^\ant(1)=1$. Let $\ww{s}=(s_1,\ldots,s_r)\in\Ical$ be non-empty. We first prove
\[
\sum_{i=0}^r
\ev(\ww{s}[:i])\ev^\ant(\ww{s}[i+1:])
=0.
\]
Set $H_0=H_{r+1}=0$,
and, for $1\leq i\leq r$, define
\[
H_i
=
(-1)^{r-i}
\sum_{\substack{
D>d_1>\cdots>d_i\geq0\\
d_i\leq d_{i+1}\leq\cdots\leq d_r<D}}
\prod_{k=1}^r\mathscr{S}_{d_k}(s_k).
\]
In particular,
\[
H_1=-\ev^\ant(\ww{s}),
\qquad
H_r=\ev(\ww{s}).
\]
We claim that, for every $0\leq i\leq r$,
\begin{equation}\label{eq:degree-telescoping}
\ev(\ww{s}[:i])\ev^\ant(\ww{s}[i+1:])
=
H_i-H_{i+1}.
\end{equation}
For $i=0$, this follows from
\[
H_0-H_1
=
\ev^\ant(\ww{s})
=
\ev(\varnothing)\ev^\ant(\ww{s}),
\]
while for $i=r$,
\[
H_r-H_{r+1}
=
\ev(\ww{s})
=
\ev(\ww{s})\ev^\ant(\varnothing).
\]
For $1\leq i\leq r-1$, we have
\[
\begin{aligned}
&\ev(\ww{s}[:i])\ev^\ant(\ww{s}[i+1:])=
(-1)^{r-i}
\sum_{\substack{
D>d_1>\cdots>d_i\geq0\\
0\leq d_{i+1}\leq\cdots\leq d_r<D}}
\prod_{k=1}^r\mathscr{S}_{d_k}(s_k).
\end{aligned}
\]
There is no condition relating $d_i$ and $d_{i+1}$ in this sum.
Splitting it into the two disjoint cases
\[
d_i\leq d_{i+1}
\qquad\text{and}\qquad
d_i>d_{i+1},
\]
gives
\[
\ev(\ww{s}[:i])\ev^\ant(\ww{s}[i+1:])
=
H_i-H_{i+1}.
\]
Thus \eqref{eq:degree-telescoping} holds for every $0\leq i\leq r$.
Summing over $i$ yields
\[
\begin{aligned}
\sum_{i=0}^r
\ev(\ww{s}[:i])\ev^\ant(\ww{s}[i+1:])
=
\sum_{i=0}^r(H_i-H_{i+1})=
H_0-H_{r+1}
=0.
\end{aligned}
\]
Similarly,
\[
\sum_{i=0}^r
\ev^{\ant}(\ww{s}[:i])\ev(\ww{s}[i+1:])
=0.
\]
Thus, Condition \ref{cond:convolution} holds.
\end{proof}

\begin{lem}\label{lem:image-comparison}
Let $\bullet\in\{\zeta,\Li\}$. Assume Conditions
\ref{cond:unital} and \ref{cond:convolution}. If
Condition \ref{cond:ev-product} holds, then
\[
\ev^\ant(\Hfk_R)\subseteq\ev(\Hfk_R).
\]
If Condition \ref{cond:ant-product} holds, then
\[
\ev(\Hfk_R)\subseteq\ev^\ant(\Hfk_R).
\]
In particular, if both Conditions \ref{cond:ev-product} and \ref{cond:ant-product} hold, then \[ \ev(\Hfk_R)=\ev^\ant(\Hfk_R). \]
\end{lem}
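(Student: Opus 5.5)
Under Condition \ref{cond:ev-product}, I will show by induction on depth that $\ev^\ant([\ww{s}])\in\ev(\Hfk_R)$ for every index $\ww{s}$. The tool is the convolution identity of Condition \ref{cond:convolution}, which lets me solve for one of $\ev$ or $\ev^\ant$ in terms of the other. The case $\ww{s}=\varnothing$ is covered by Condition \ref{cond:unital}, since $\ev^\ant(1)=1=\ev(1)$. For non-empty $\ww{s}$ of depth $r$, I isolate the $i=0$ term of $m_B(\ev\otimes\ev^\ant)\Delta([\ww{s}])=0$ and obtain
\[
\ev^\ant([\ww{s}])=-\sum_{i=1}^{r}\ev([\ww{s}[:i]])\,\ev^\ant([\ww{s}[i+1:]]).
\]
Every $\ww{s}[i+1:]$ with $i\geq1$ has depth $<r$, so by induction $\ev^\ant([\ww{s}[i+1:]])=\ev(Q_i)$ for some $Q_i\in\Hfk_R$. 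Condition \ref{cond:ev-product} then gives
\[
\ev([\ww{s}[:i]])\,\ev(Q_i)=\ev([\ww{s}[:i]]\ast^\bullet Q_i).
\]
Hence $\ev^\ant([\ww{s}])=\ev\bigl(-\sum_{i=1}^r[\ww{s}[:i]]\ast^\bullet Q_i\bigr)\in\ev(\Hfk_R)$. Since $\ev$ and $\ev^\ant$ are $R$-linear, this proves $\ev^\ant(\Hfk_R)\subseteq\ev(\Hfk_R)$.

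The reverse inclusion under Condition \ref{cond:ant-product} is the symmetric argument. I isolate the $i=r$ term of the same identity,
\[
\ev([\ww{s}])=-\sum_{i=0}^{r-1}\ev([\ww{s}[:i]])\,\ev^\ant([\ww{s}[i+1:]]),
\]
where now each prefix $\ww{s}[:i]$ has depth $<r$. By induction $\ev([\ww{s}[:i]])=\ev^\ant(P_i)$, and Condition \ref{cond:ant-product} turns each product $\ev^\ant(P_i)\,\ev^\ant([\ww{s}[i+1:]])$ into $\ev^\ant(P_i\ast^\bullet[\ww{s}[i+1:]])$. This gives $\ev([\ww{s}])\in\ev^\ant(\Hfk_R)$, and so $\ev(\Hfk_R)\subseteq\ev^\ant(\Hfk_R)$. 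The final equality follows by combining the two inclusions.

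I expect no real obstacle. The only points to check are that the recursion uses exactly one of the two halves of Condition \ref{cond:convolution}, and that the multiplicativity assumed is the one matching the factor supplied by the induction hypothesis: products of $\ev$-values for the first inclusion, products of $\ev^\ant$-values for the second.
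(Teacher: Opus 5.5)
Your proposal is correct and follows the paper's proof: induction on depth, isolating the $i=0$ term of the convolution identity in Condition \ref{cond:convolution}, and using Condition \ref{cond:ev-product} to absorb the products into $\ev$. For the reverse inclusion, which the paper only calls ``the same argument,'' you isolate the $i=r$ term of the same half of Condition \ref{cond:convolution} instead of using the other half $m_B(\ev^\ant\otimes\ev)\Delta=0$; this works equally well, since each prefix $\ww{s}[:i]$ with $i<r$ has smaller depth and Condition \ref{cond:ant-product} combines $\ev^\ant(P_i)\,\ev^\ant([\ww{s}[i+1:]])$ into a single $\ev^\ant$-value.
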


\begin{proof}
Suppose that Condition \ref{cond:ev-product} holds. We prove $\ev^{\ant}(\ww{s})\in\ev(\Hfk_R)$ by induction on $\dep(\ww{s})$. The case $\ww{s}=\varnothing$ follows from Condition \ref{cond:unital}. For any non-empty index
$\ww{s}=(s_1,\ldots,s_r)\in\Ical$, Condition 
\ref{cond:convolution} gives
\[
\ev^\ant(\ww{s})
=-\sum_{i=1}^r
\ev(\ww{s}[:i])\ev^\ant(\ww{s}[i+1:]).
\]
For every $i$, the induction hypothesis gives $P_i\in\Hfk_R$ such that
\[
\ev(P_i)=\ev^\ant(\ww{s}[i+1:]).
\]
Hence, Condition
\ref{cond:ev-product} yields
\[
\ev^\ant(\ww{s})
=\ev\left(-\sum_{i=1}^r
[\ww{s}[:i]]\ast^\bullet P_i\right)\in\ev(\Hfk_R).
\]
By $R$-linearity, this proves
$\ev^\ant(\Hfk_R)\subseteq\ev(\Hfk_R)$. On the other hand, if Condition \ref{cond:ant-product} holds, the same argument shows that
$\ev(\Hfk_R)\subseteq\ev^\ant(\Hfk_R)$. This completes the proof.
\end{proof}

\subsection{Deconcatenation of the Linear Relations}\label{sec:deconcatenation-relations}

We next formulate a consequence of the preceding deconcatenation
formalism which will be used for both realizations.  Let $J\subseteq B$
be an ideal and write
\[
\pi:B\longrightarrow \ov B
\]
for the quotient map where $\ov{B}=B/J$.  Put
\[
\ov{\ev}=\pi\circ\ev,
\qquad
\ov{\ev}^{\ant}=\pi\circ\ev^\ant.
\]
We assume Conditions \ref{cond:unital}, \ref{cond:convolution}, and
\ref{cond:ev-product}, and in addition assume that
\begin{equation}\label{eq:quotient-vanishing}
\begin{cases}
\ov{\ev}([j])=0 & \text{whenever $q-1\mid j$, if $\bullet=\zeta$},\\
\ov{\ev}([q-1])=0 & \text{if $\bullet=\Li$}.
\end{cases}
\end{equation}

The maps $\ov{\ev}$ and $\ov{\ev}^{\ant}$ satisfy Conditions
\ref{cond:unital} and \ref{cond:convolution}.  Moreover, for every
$j\in\NN$, Condition \ref{cond:convolution} in depth one gives
\[
\ov{\ev}^{\ant}([j])=-\ov{\ev}([j]).
\]
Hence \eqref{eq:quotient-vanishing} implies Condition
\ref{cond:vanishing} for the pair
$(\ov{\ev},\ov{\ev}^{\ant})$.  Applying
\cref{lem:bullet-stuffle-transfer} to this pair shows that
$\ov{\ev}^{\ant}$ is an $R$-algebra homomorphism with respect to $\ast^\bullet$.
We may therefore apply \cref{lem:bullet-stuffle-transfer} again to the pair
$(\ov{\ev}^{\ant},\ov{\ev})$.  If we put
\[
\Rcal_\pi
=
(\id\otimes\ov{\ev})\Delta:
\Hfk_R\longrightarrow\Hfk_R\otimes_R\ov B
\]
and equip $\Hfk_R\otimes_R\ov B$ with the product
\[
(P\otimes x)\star^\bullet(Q\otimes y)
=
(P\ast^\bullet Q)\otimes xy,
\]
then we obtain
\begin{equation}\label{eq:quotient-product-transfer}
\Rcal_\pi(P\ast^\bullet Q)
=
\Rcal_\pi(P)\star^\bullet\Rcal_\pi(Q)
\qquad(P,Q\in\Hfk_R).
\end{equation}

For the remainder of this subsection, we abbreviate $\alpha=\alpha_{1;(q-1)}^\bullet$ (recall \cref{df:alpha}).
For every non-empty index
$\ww{s}=(s_1,\ldots,s_r)\in\Ical$, we define the $R$-linear map
\[
B_{\ww{s}}^\bullet:\Hfk_R\longrightarrow\Hfk_R
\]
by
\begin{equation}\label{eq:B-bullet-definition}
B_{\ww{s}}^\bullet(P)
=
[\ww{s},P]
+
\ww{s}\boxplus P
+
\varepsilon_\bullet
[\ww{s}^{+},D_{s_r}(P)]\qquad(P\in\Hfk_R).
\end{equation}
Thus $B_{\ww{s}}^\bullet(1)=[\ww{s}]$.  For $m\geq1$ and
$\ww{n}\in\Ical$, put
\begin{equation}\label{eq:P-bullet-definition}
P_m^\bullet(\ww{n})
=
[\{q\}^m,\ww{n}]
+
\{q\}^m\boxplus\ww{n}
+
\varepsilon_\bullet
[\{q\}^{m-1},D_q(\ww{n})].
\end{equation}
Equivalently,
\[
P_m^\bullet(\ww{n})
=
B_{\{q\}^m}^\bullet(\ww{n}).
\]
With this notation, we have
(recall \cref{df:Mishiba-relations})
\begin{equation}\label{eq:coaction-A-PB}
\sA^\bullet(\ww{s};m;\ww{n})
=
[\ww{s},P_m^\bullet(\ww{n})]
-
L_1^m
B_{\ww{s}}^\bullet\left(\alpha^m(\ww{n})\right)
\end{equation}
for non-empty $\ww{s}\in\Ical$ and
\begin{equation}\label{eq:coaction-A-empty}
\sA^\bullet(\varnothing;m;\ww{n})
=
P_m^\bullet(\ww{n})
-
L_1^m\alpha^m(\ww{n}).
\end{equation}

We first establish two auxiliary formulas.

\begin{lem}\label{lem:formal-coaction-cuts}
Under the assumptions above, let $\ww{n}\in\Ical$ with $\dep(\ww{n})=\ell$.  Then, for every
$m\geq0$,
\begin{equation}\label{eq:coaction-alpha-cut}
\begin{aligned}
\Rcal_\pi\left(\alpha^m(\ww{n})\right)
={}&
1\otimes\ov{\ev}\left(\alpha^m(\ww{n})\right)+
\sum_{i=1}^{m}
\alpha^i(1)\otimes
\ov{\ev}\left(\alpha^{m-i}(\ww{n})\right)\\
&+
\sum_{k=1}^{\ell}
\alpha^m(\ww{n}[:k])\otimes
\ov{\ev}(\ww{n}[k+1:]).
\end{aligned}
\end{equation}
Moreover, for every $c\in\NN$,
\begin{equation}\label{eq:coaction-D-cut}
\begin{aligned}
\varepsilon_\bullet\Rcal_\pi(D_c(\ww{n}))
={}&
\varepsilon_\bullet
\left(
1\otimes\ov{\ev}(D_c(\ww{n}))
+
\sum_{k=1}^{\ell}
D_c(\ww{n}[:k])\otimes
\ov{\ev}(\ww{n}[k+1:])
\right).
\end{aligned}
\end{equation}
\end{lem}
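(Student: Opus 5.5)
We prove \eqref{eq:coaction-alpha-cut} by induction on $m$. For $m=0$ it is just the definition of $\Rcal_\pi=(\id\otimes\ov{\ev})\Delta$ applied to $[\ww{n}]$: the $k=0$ cut gives $1\otimes\ov{\ev}(\ww{n})$, the remaining cuts give the last sum, and the middle sum is empty.

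For the inductive step write $\alpha^{m+1}(\ww{n})=[1,(q-1)\ast^\bullet\alpha^m(\ww{n})]$. By \eqref{eq:R-left-letter} (whose proof is purely combinatorial and so also holds for $\Rcal_\pi$ with $\ov{\ev}$ in place of $\ev^\ant$),
\[
\Rcal_\pi(\alpha^{m+1}(\ww{n}))=1\otimes\ov{\ev}(\alpha^{m+1}(\ww{n}))+([1,-]\otimes\id)\Rcal_\pi\bigl((q-1)\ast^\bullet\alpha^m(\ww{n})\bigr).
\]
By \eqref{eq:quotient-product-transfer} the last $\Rcal_\pi$ equals $\Rcal_\pi([q-1])\star^\bullet\Rcal_\pi(\alpha^m(\ww{n}))$. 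By \eqref{eq:quotient-vanishing}, $\Rcal_\pi([q-1])=1\otimes\ov{\ev}([q-1])+[q-1]\otimes1=[q-1]\otimes1$. Hence this term is $((q-1)\ast^\bullet-)\otimes\id$ applied to $\Rcal_\pi(\alpha^m(\ww{n}))$. Substituting the inductive formula and then applying $[1,-]$ turns each first tensor factor $X$ into $\alpha(X)$:
\begin{itemize}
\item the term $1\otimes\ov{\ev}(\alpha^m(\ww{n}))$ becomes $\alpha(1)\otimes\ov{\ev}(\alpha^m(\ww{n}))$, which is the new $i=1$ term;
\item each $\alpha^i(1)\otimes\ov{\ev}(\alpha^{m-i}(\ww{n}))$ becomes the $i+1$ term;
\item each $\alpha^m(\ww{n}[:k])\otimes\ov{\ev}(\ww{n}[k+1:])$ becomes $\alpha^{m+1}(\ww{n}[:k])\otimes\ov{\ev}(\ww{n}[k+1:])$.
\end{itemize}
This is exactly the formula for $m+1$.

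For \eqref{eq:coaction-D-cut} only $\bullet=\zeta$ matters. The case $\ww{n}=\varnothing$ is trivial, since $D_c(1)=0$ and the sum is empty. For $\ww{n}$ non-empty, $D_c(\ww{n})=\sum_j\Delta^{[j]}_{c,n_1}[c+n_1-j,[j]\ast^\zeta\ww{n}^-]$. Again by \eqref{eq:R-left-letter} and \eqref{eq:quotient-product-transfer}, together with $\Rcal_\pi([j])=[j]\otimes1$ for $q-1\mid j$ (the only $j$ with $\Delta^{[j]}\ne0$), we get
\[
\Rcal_\pi(D_c(\ww{n}))=1\otimes\ov{\ev}(D_c(\ww{n}))+\sum_j\Delta^{[j]}_{c,n_1}\,([c+n_1-j,[j]\ast^\zeta-]\otimes\id)\,\Rcal_\pi(\ww{n}^-).
\]
Now expand $\Rcal_\pi(\ww{n}^-)=\sum_{k=1}^{\ell}[\ww{n}[2:k]]\otimes\ov{\ev}(\ww{n}[k+1:])$, where the $k=1$ term is $1\otimes\ov{\ev}(\ww{n}^-)$. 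Then $[c+n_1-j,[j]\ast^\zeta\ww{n}[2:k]]$ summed against $\Delta^{[j]}$ is precisely $D_c(\ww{n}[:k])$, since $\ww{n}[:k]^-=\ww{n}[2:k]$ and $c$ plays the role of $s_1$ with $(c)^-=\varnothing$. This gives the claimed formula.

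The main point to check carefully is that \eqref{eq:R-left-letter} and \eqref{eq:quotient-product-transfer} genuinely apply with $\ov{\ev}$ on the right tensor factor; this was established just before the lemma. The remaining risk is the bookkeeping of the $k=0$ versus $k\ge1$ cuts: in particular, the $k=0$ cut of $\alpha^m(\ww{n})$ must not be double counted, and it is absorbed into the new $1\otimes\ov{\ev}(\cdot)$ term coming from \eqref{eq:R-left-letter}.
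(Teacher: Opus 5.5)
Your proposal is correct and follows essentially the same route as the paper. The paper first records the general recursion $\Rcal_\pi(\alpha(P))=1\otimes\ov{\ev}(\alpha(P))+(\alpha\otimes\id)\Rcal_\pi(P)$, obtained from the left-letter formula, the product transfer and $\Rcal_\pi([q-1])=[q-1]\otimes1$, and then inducts on $m$, which is what you do inline with $P=\alpha^m(\ww{n})$; the $D_c$ identity is handled the same way in both, by expanding $\Rcal_\pi([j]\ast^\zeta\ww{n}^-)$ and then applying the left-letter formula.
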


\begin{proof}
By \eqref{eq:quotient-vanishing} and \eqref{eq:quotient-product-transfer}, we have
\[
\Rcal_\pi([q-1])=[q-1]\otimes1,
\qquad
\Rcal_\pi([q-1]\ast^\bullet P)
=
([q-1]\otimes1)\star^\bullet\Rcal_\pi(P)
\]
for every $P\in\Hfk_R$. Applying
\eqref{eq:R-left-letter} to the pair
$(\ov{\ev}^{\ant},\ov{\ev})$ yields
\begin{align}\label{eq:alpha-R-recursion}
\Rcal_\pi(\alpha(P))
&=
\Rcal_\pi\left([1,[q-1]\ast^\bullet P]\right)\notag\\
&=
1\otimes\ov{\ev}(\alpha(P))
+
([1,-]\otimes\id)
\Rcal_\pi([q-1]\ast^\bullet P)\notag\\
&=
1\otimes\ov{\ev}(\alpha(P))
+
([1,-]\otimes\id)
\left(
([q-1]\otimes1)\star^\bullet\Rcal_\pi(P)
\right)\notag\\
&=
1\otimes\ov{\ev}(\alpha(P))
+
(\alpha\otimes\id)\Rcal_\pi(P).
\end{align}
We prove \eqref{eq:coaction-alpha-cut} by induction on $m$. For $m=0$, the result is clear. Now suppose that the result holds for some $m\geq0$. Then
by \eqref{eq:alpha-R-recursion},
\begin{align*}
\Rcal_\pi(\alpha^{m+1}(\ww{n}))
&=
1\otimes
\ov{\ev}(\alpha^{m+1}(\ww{n}))
+
(\alpha\otimes\id)
\Rcal_\pi(\alpha^m(\ww{n}))\\
&=
1\otimes
\ov{\ev}(\alpha^{m+1}(\ww{n}))+
\alpha(1)\otimes
\ov{\ev}(\alpha^m(\ww{n}))\\
&\quad+
\sum_{i=1}^m
\alpha^{i+1}(1)\otimes
\ov{\ev}(\alpha^{m-i}(\ww{n}))+
\sum_{j=1}^\ell
\alpha^{m+1}(\ww{n}[:j])\otimes
\ov{\ev}(\ww{n}[j+1:])\\
&=
1\otimes
\ov{\ev}(\alpha^{m+1}(\ww{n}))+
\sum_{i=1}^{m+1}
\alpha^i(1)\otimes
\ov{\ev}(\alpha^{m+1-i}(\ww{n}))\\
&\quad+
\sum_{k=1}^\ell
\alpha^{m+1}(\ww{n}[:k])\otimes
\ov{\ev}(\ww{n}[k+1:]).
\end{align*}

Next, we prove \eqref{eq:coaction-D-cut}.
If $\bullet=\Li$, then \eqref{eq:coaction-D-cut} is immediate from
$\varepsilon_\Li=0$.  Suppose that $\bullet=\zeta$.  The assertion is
clear for $\ww{n}=\varnothing$.  If
$\ww{n}=(n_1,\ldots,n_\ell)\in\Ical$ is non-empty, then
\eqref{eq:D-zeta-definition} gives
\[
D_c(\ww{n})
=
\sum_{j=1}^{c+n_1-1}
\Delta_{c,n_1}^{[j]}
[c+n_1-j,[j]\ast^\zeta\ww{n}^-].
\]
Whenever $\Delta_{c,n_1}^{[j]}\neq0$, we have $q-1\mid j$, so
\eqref{eq:quotient-vanishing} implies
\[
\Rcal_\pi([j])=[j]\otimes1.
\]
Using \eqref{eq:quotient-product-transfer}, we obtain
\[
\Rcal_\pi([j]\ast^\zeta\ww{n}^-)
=
\sum_{k=1}^{\ell}
([j]\ast^\zeta\ww{n}[:k]^-)
\otimes\ov{\ev}(\ww{n}[k+1:]).
\]
Applying \eqref{eq:R-left-letter} to each summand and summing over $j$
proves \eqref{eq:coaction-D-cut}.
\end{proof}

We now derive the corresponding formulas for
$B_{\ww{s}}^\bullet$ and $P_m^\bullet$.

\begin{prop}
Under the assumptions above, for every non-empty
$\ww{s}=(s_1,\ldots,s_r)\in\Ical$ and every $P\in\Hfk_R$, we have
\begin{equation}\label{eq:coaction-B-cut}
\begin{aligned}
\Rcal_\pi(B_{\ww{s}}^\bullet(P))
={}
1\otimes\ov{\ev}(B_{\ww{s}}^\bullet(P))+
\sum_{j=1}^{r-1}
[\ww{s}[:j]]\otimes
\ov{\ev}(B_{\ww{s}[j+1:]}^\bullet(P))
+
(B_{\ww{s}}^\bullet\otimes\id)\Rcal_\pi(P).
\end{aligned}
\end{equation}
In particular, for every $m\geq1$ and $\ww{n}\in\Ical$ with $\dep(\ww{n})=\ell$, we have
\begin{equation}\label{eq:coaction-P-cut}
\begin{aligned}
\Rcal_\pi(P_m^\bullet(\ww{n}))
={}&
1\otimes\ov{\ev}(P_m^\bullet(\ww{n}))+
\sum_{i=1}^{m-1}
[\{q\}^i]\otimes
\ov{\ev}(P_{m-i}^\bullet(\ww{n}))\\
&+
[\{q\}^m]\otimes\ov{\ev}(\ww{n})+
\sum_{k=1}^{\ell}
P_m^\bullet(\ww{n}[:k])\otimes
\ov{\ev}(\ww{n}[k+1:]).
\end{aligned}
\end{equation}
\end{prop}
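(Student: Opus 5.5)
By $R$-linearity in $P$ it suffices to take $P=[\ww{n}]$ with $\ww{n}=(n_1,\ldots,n_\ell)\in\Ical$. We then expand the three summands of $B_{\ww{s}}^\bullet(\ww{n})$ in \eqref{eq:B-bullet-definition} separately and regroup the results. The basic tool is \cref{lem:R-concatenation}, applied to the pair $(\ov{\ev}^{\ant},\ov{\ev})$. For this pair the map $\Rcal$ of that lemma is exactly $\Rcal_\pi$, so for non-empty $\ww{t}$ of depth $u$ we get
\[
\Rcal_\pi([\ww{t},Q])=1\otimes\ov{\ev}([\ww{t},Q])+\sum_{i=1}^{u-1}[\ww{t}[:i]]\otimes\ov{\ev}([\ww{t}[i+1:],Q])+([\ww{t},-]\otimes\id)\Rcal_\pi(Q).
\]
The only other input is the formula \eqref{eq:coaction-D-cut} of \cref{lem:formal-coaction-cuts} for $\varepsilon_\bullet\Rcal_\pi(D_c(\ww{n}))$. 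These inputs already use \eqref{eq:quotient-vanishing} and the multiplicativity \eqref{eq:quotient-product-transfer}, so no further product computations are needed.

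\emph{Step 1 (the term $[\ww{s},\ww{n}]$).} The displayed formula with $\ww{t}=\ww{s}$ gives three pieces. The first is $1\otimes\ov{\ev}([\ww{s},\ww{n}])$. The second is the cuts $[\ww{s}[:j]]\otimes\ov{\ev}([\ww{s}[j+1:],\ww{n}])$ for $1\le j\le r-1$. The third is $\sum_{k=0}^{\ell}[\ww{s},\ww{n}[:k]]\otimes\ov{\ev}(\ww{n}[k+1:])$; its $k=0$ term $[\ww{s}]\otimes\ov{\ev}(\ww{n})$ matches $B^\bullet_{\ww{s}}(1)=[\ww{s}]$.

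\emph{Step 2 (the term $\ww{s}\boxplus\ww{n}$).} If $\ww{n}=\varnothing$ this term is $0$, so assume $\ww{n}\neq\varnothing$. Write $\ww{s}\boxplus\ww{n}=[\ww{s}^+,s_r+n_1,\ww{n}^-]$. The cuts strictly inside $\ww{s}^+$ give $[\ww{s}[:j]]\otimes\ov{\ev}(\ww{s}[j+1:]\boxplus\ww{n})$ for $1\le j\le r-2$. The cut right after $\ww{s}^+$ gives $[\ww{s}[:r-1]]\otimes\ov{\ev}((s_r)\boxplus\ww{n})$, which is the $j=r-1$ instance of the same shape. The cuts at or after the merged letter give $\sum_{k\ge1}(\ww{s}\boxplus\ww{n}[:k])\otimes\ov{\ev}(\ww{n}[k+1:])$. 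Since $\ww{s}\boxplus1=0$, this agrees with $(\ww{s}\boxplus-\otimes\id)\Rcal_\pi(\ww{n})$. The case $r=1$ (so $\ww{s}^+=\varnothing$) is handled by applying \eqref{eq:R-left-letter} directly to the letter $s_1+n_1$.

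\emph{Step 3 (the $D$-term, only for $\bullet=\zeta$).} Apply the displayed formula with $\ww{t}=\ww{s}^+$ and $Q=D_{s_r}(\ww{n})$. The cuts inside $\ww{s}^+$ give $[\ww{s}[:j]]\otimes\ov{\ev}([\ww{s}[j+1:]^+,D_{s_r}(\ww{n})])$ for $j\le r-2$. The remaining part is $([\ww{s}^+,-]\otimes\id)\Rcal_\pi(D_{s_r}(\ww{n}))$, which we expand using \eqref{eq:coaction-D-cut}. Its constant piece $1\otimes\ov{\ev}(D_{s_r}(\ww{n}))$ produces $[\ww{s}[:r-1]]\otimes\ov{\ev}([\varnothing,D_{s_r}(\ww{n})])$, i.e.\ the $j=r-1$ cut of $B^\bullet_{(s_r)}$, because $(s_r)^+=\varnothing$. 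Its other pieces give $\sum_{k}[\ww{s}^+,D_{s_r}(\ww{n}[:k])]\otimes\ov{\ev}(\ww{n}[k+1:])$, and $D_{s_r}(1)=0$ makes this consistent with the $k=0$ slot.

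\emph{Step 4 (regrouping).} Collect the cuts with the same left factor $[\ww{s}[:j]]$. For $1\le j\le r-1$ the three contributions add up to $\ov{\ev}(B^\bullet_{\ww{s}[j+1:]}(\ww{n}))$, using $\ww{s}[j+1:]^+=\ww{s}[j+1:r-1]$. The constant pieces add up to $1\otimes\ov{\ev}(B^\bullet_{\ww{s}}(\ww{n}))$, and the tails add up to $(B^\bullet_{\ww{s}}\otimes\id)\Rcal_\pi(\ww{n})$. This gives \eqref{eq:coaction-B-cut}. The main obstacle is this bookkeeping: at the boundary cut $j=r-1$ the $\boxplus$- and $D$-pieces of $B^\bullet_{(s_r)}$ must come out exactly once, and the degenerate cases $r=1$ and $\ww{n}=\varnothing$ must be checked. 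I would verify these boundary cases by hand first.

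Finally, \eqref{eq:coaction-P-cut} follows from \eqref{eq:coaction-B-cut} by specializing to $\ww{s}=\{q\}^m$ and $P=\ww{n}$, using $P^\bullet_{m-i}(\ww{n})=B^\bullet_{\{q\}^{m-i}}(\ww{n})$. The tail $(B^\bullet_{\{q\}^m}\otimes\id)\Rcal_\pi(\ww{n})$ splits into its $k=0$ term $[\{q\}^m]\otimes\ov{\ev}(\ww{n})$ and the terms $P_m^\bullet(\ww{n}[:k])\otimes\ov{\ev}(\ww{n}[k+1:])$ for $k\ge1$.
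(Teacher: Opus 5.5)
Your proposal is correct and follows essentially the same route as the paper. You reduce to $P=[\ww{n}]$, expand the concatenation, $\boxplus$, and $D$-parts of $B_{\ww{s}}^\bullet(\ww{n})$ using \cref{lem:R-concatenation} for the pair $(\ov{\ev}^{\ant},\ov{\ev})$ together with \eqref{eq:coaction-D-cut}, regroup by left factor using $B_{\ww{s}}^\bullet(1)=[\ww{s}]$, and specialize to $\ww{s}=\{q\}^m$; your explicit handling of the boundary cut $j=r-1$ and of the cases $r=1$ and $\ww{n}=\varnothing$ just spells out bookkeeping the paper leaves implicit.
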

\begin{proof}
By $R$-linearity, it suffices to take
$P=[\ww{n}]$, where $\ww{n}\in\Ical$ and
$\dep(\ww{n})=\ell$.

When $\bullet=\Li$, we have $\varepsilon_\bullet=0$, and
\eqref{eq:coaction-B-cut} follows directly from the deconcatenation of $[\ww{s},\ww{n}]+\ww{s}\boxplus\ww{n}$.
Now suppose that $\bullet=\zeta$. By
\eqref{eq:coaction-D-cut}, we have
\[
\Rcal_\pi(D_{s_r}(\ww{n}))
=
1\otimes\ov{\ev}(D_{s_r}(\ww{n}))
+
\sum_{k=1}^{\ell}
D_{s_r}(\ww{n}[:k])
\otimes
\ov{\ev}(\ww{n}[k+1:]).
\]
Hence, by \cref{lem:R-concatenation},
\[
\begin{aligned}
\Rcal_\pi
\left(
[\ww{s}^{+},D_{s_r}(\ww{n})]
\right)
={}&
1\otimes
\ov{\ev}
\left(
[\ww{s}^{+},D_{s_r}(\ww{n})]
\right)\\
&+
\sum_{j=1}^{r-1}
[\ww{s}[:j]]
\otimes
\ov{\ev}
\left(
[\ww{s}[j+1:]^{+},D_{s_r}(\ww{n})]
\right)\\
&+
\sum_{k=1}^{\ell}
[\ww{s}^{+},D_{s_r}(\ww{n}[:k])]
\otimes
\ov{\ev}(\ww{n}[k+1:]).
\end{aligned}
\]
Combining this with the deconcatenation of
$[\ww{s},\ww{n}]+\ww{s}\boxplus\ww{n}$, we obtain
\begin{align}\label{eq:Rpi(B)}
\Rcal_\pi(B_{\ww{s}}^\bullet(\ww{n}))
={}&
1\otimes
\ov{\ev}(B_{\ww{s}}^\bullet(\ww{n}))+
\sum_{j=1}^{r-1}
[\ww{s}[:j]]
\otimes
\ov{\ev}
\left(
B_{\ww{s}[j+1:]}^\bullet(\ww{n})
\right)\\\notag
&+
[\ww{s}]\otimes\ov{\ev}(\ww{n})+
\sum_{k=1}^{\ell}
B_{\ww{s}}^\bullet(\ww{n}[:k])
\otimes
\ov{\ev}(\ww{n}[k+1:]).
\end{align}
Since
\[
\Rcal_\pi(\ww{n})
=
1\otimes\ov{\ev}(\ww{n})
+
\sum_{k=1}^{\ell}
\ww{n}[:k]\otimes
\ov{\ev}(\ww{n}[k+1:])
\]
and $B_{\ww{s}}^\bullet(1)=[\ww{s}]$, the last two terms are equal to
$(B_{\ww{s}}^\bullet\otimes\id)
\Rcal_\pi(\ww{n})$.
This proves \eqref{eq:coaction-B-cut}.

Finally, let $m\geq1$. Since
$P_m^\bullet(\ww{n})
=
B_{\{q\}^m}^\bullet(\ww{n})$,
applying \eqref{eq:Rpi(B)} with
$\ww{s}=\{q\}^m$ gives
\[
\begin{aligned}
\Rcal_\pi(P_m^\bullet(\ww{n}))
={}&
1\otimes\ov{\ev}(P_m^\bullet(\ww{n}))
+
\sum_{i=1}^{m-1}
[\{q\}^i]\otimes
\ov{\ev}(P_{m-i}^\bullet(\ww{n}))\\
&+
[\{q\}^m]\otimes\ov{\ev}(\ww{n})+
\sum_{k=1}^{\ell}
P_m^\bullet(\ww{n}[:k])
\otimes
\ov{\ev}(\ww{n}[k+1:]),
\end{aligned}
\]
which completes the proof.
\end{proof}

\subsection{The Formal Coaction}\label{sec:formal-coaction}
We are now ready to construct the formal coaction.

\begin{prop}\label{prop:formal-coaction}
Assume that the pair $(\ev,\ev^\ant)$ satisfies Conditions
\ref{cond:unital}, \ref{cond:convolution}, and
\ref{cond:ev-product}, and that \eqref{eq:quotient-vanishing} holds.
Suppose moreover that
\[
\ker(\ev)=\mathscr{R}_R^\bullet=\Span_R\left\{
\sA^\bullet(\ww{s};m;\ww{n})
:
\ww{s},\ww{n}\in\Ical,\ m\geq1
\right\}.
\]
Then the assignment
\begin{equation}\label{eq:formal-coaction-map}
\begin{aligned}
\widetilde{\Delta}_{\ev}:
\ev(\Hfk_R)
&\longrightarrow
\ev(\Hfk_R)\otimes_R\ov{\ev}(\Hfk_R),\\
\ev(P)
&\longmapsto
(\ev\otimes\ov{\ev})\Delta(P)
\end{aligned}
\end{equation}
is a well-defined $R$-algebra homomorphism.  In particular, for every
index $\ww{s}\in\Ical$ with $\dep(\ww{s})=r$,
\[
\widetilde{\Delta}_{\ev}(\ev([\ww{s}]))
=
\sum_{i=0}^r
\ev([\ww{s}[:i]])\otimes
\ov{\ev}([\ww{s}[i+1:]]).
\]
\end{prop}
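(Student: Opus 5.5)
The plan is to show that the $R$-linear map $\Phi=(\ev\otimes\ov{\ev})\Delta:\Hfk_R\to\ev(\Hfk_R)\otimes_R\ov{\ev}(\Hfk_R)$ vanishes on $\ker(\ev)=\mathscr{R}_R^\bullet$. Then it factors through $\ev$, which gives the well-defined map $\widetilde{\Delta}_{\ev}$. Multiplicativity then follows from \eqref{eq:quotient-product-transfer}. Indeed, $\Phi=(\ev\otimes\id)\Rcal_\pi$, and by Condition \ref{cond:ev-product} the map $\ev\otimes\id$ turns $\star^\bullet$ into the product of $\ev(\Hfk_R)\otimes\ov B$. Hence $\Phi(P\ast^\bullet Q)=\Phi(P)\Phi(Q)$. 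Since $\ev(P)\ev(Q)=\ev(P\ast^\bullet Q)$, the induced map is an $R$-algebra homomorphism, and it is unital because $\Phi(1)=1\otimes 1$. The displayed formula on $\ev([\ww{s}])$ is just the definition of $\Delta$.

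The main step is checking $\Phi(\sA^\bullet(\ww{s};m;\ww{n}))=0$. I will write $\sA^\bullet$ in the form \eqref{eq:coaction-A-PB} or \eqref{eq:coaction-A-empty} and expand $\Rcal_\pi$ of each piece. For $[\ww{s},P_m^\bullet(\ww{n})]$ I use \cref{lem:R-concatenation} (for the pair $(\ov{\ev}^{\ant},\ov{\ev})$) together with \eqref{eq:coaction-P-cut}. For $B^\bullet_{\ww{s}}(\alpha^m(\ww{n}))$ I use \eqref{eq:coaction-B-cut} together with \eqref{eq:coaction-alpha-cut}. I will then group the resulting terms by the cut position.

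\begin{enumerate}
\item Cuts inside $\ww{s}$, i.e.\ the terms $[\ww{s}[:j]]\otimes\cdots$ with $j<r$, together with the term $1\otimes\cdots$. These give $[\ww{s}[:j]]\otimes\ov{\ev}(\sA^\bullet(\ww{s}[j+1:];m;\ww{n}))$ and $1\otimes\ov{\ev}(\sA^\bullet(\ww{s};m;\ww{n}))$, and both vanish because $\sA^\bullet\in\ker\ev$.
\item Cuts inside $\{q\}^m$ versus $\alpha^i(1)$. These produce the terms $[\ww{s},\{q\}^i]\otimes\ov{\ev}(P^\bullet_{m-i}(\ww{n}))$ and $L_1^m B^\bullet_{\ww{s}}(\alpha^i(1))\otimes\ov{\ev}(\alpha^{m-i}(\ww{n}))$.
\item Cuts inside $\ww{n}$. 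These give $\sA^\bullet(\ww{s};m;\ww{n}[:k])\otimes\ov{\ev}(\ww{n}[k+1:])$, which is killed by $\ev$ in the first factor.
\end{enumerate}

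After applying $\ev\otimes\id$, the terms of type (1) and (3) vanish. The obstacle is the middle group, together with the boundary term $[\ww{s},\{q\}^m]\otimes\ov{\ev}(\ww{n})$ versus $L_1^mB^\bullet_{\ww{s}}(\alpha^m(1))\otimes\ov{\ev}(\ww{n})$. For $0\le i<m$, write $\ov{\ev}(P^\bullet_{m-i}(\ww{n}))=L_1^{m-i}\ov{\ev}(\alpha^{m-i}(\ww{n}))$ using the relation $\sA^\bullet(\varnothing;m-i;\ww{n})\in\ker\ev$. For $1\le i\le m$, use $\ev([\ww{s},\{q\}^i])=L_1^i\,\ev(B^\bullet_{\ww{s}}(\alpha^i(1)))$, which follows from $\sA^\bullet(\ww{s};i;\varnothing)\in\ker\ev$ since $P_i^\bullet(\varnothing)=[\{q\}^i]$. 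With these substitutions, each middle-cut term $[\ww{s},\{q\}^i]\otimes\ov{\ev}(P_{m-i}^\bullet(\ww{n}))$ becomes $L_1^m\,\ev(B^\bullet_{\ww{s}}(\alpha^i(1)))\otimes\ov{\ev}(\alpha^{m-i}(\ww{n}))$, and so it cancels exactly against the corresponding $\alpha^i(1)$ term. The index ranges must match, including the boundary case $i=m$ with $\ov{\ev}(\ww{n})$ and the $i=0$ contribution from the $1\otimes$ part of \eqref{eq:coaction-B-cut}. Moving the scalar $L_1^{m-i}$ across the tensor is legitimate because we work over $R\ni L_1$; the hypothesis that $R$ is an $\FF_p(L_1)$-algebra is presumably not needed here and only matters later.

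For the empty-$\ww{s}$ relation \eqref{eq:coaction-A-empty}, the same computation applies, with \eqref{eq:coaction-alpha-cut} in place of \eqref{eq:coaction-B-cut}. Here the middle terms are $[\{q\}^i]\otimes L_1^{m-i}\ov{\ev}(\alpha^{m-i}(\ww{n}))$ against $L_1^m\alpha^i(1)\otimes\ov{\ev}(\alpha^{m-i}(\ww{n}))$, and they cancel after applying $\ev$, since $\ev([\{q\}^i])=L_1^i\ev(\alpha^i(1))$. Careful bookkeeping of these boundary terms is where I expect the real work to lie.
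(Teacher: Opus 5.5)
Your proposal is correct and follows the paper's proof essentially line for line. You factor $(\ev\otimes\ov{\ev})\Delta=(\ev\otimes\id)\Rcal_\pi$ through $\ev$ by checking that it kills each $\sA^\bullet(\ww{s};m;\ww{n})$, using \eqref{eq:coaction-A-PB} with \eqref{eq:coaction-P-cut}, \eqref{eq:coaction-B-cut} and \eqref{eq:coaction-alpha-cut}, cancel the middle cuts with $\sA^\bullet(\ww{s};i;\varnothing)$ and $\sA^\bullet(\varnothing;m-i;\ww{n})$, and get multiplicativity from \eqref{eq:quotient-product-transfer}. The only difference is cosmetic: the paper writes your $i=0$ middle term separately as $\ev([\ww{s}])\otimes\ov{\ev}(\sA^\bullet(\varnothing;m;\ww{n}))$. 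That term comes from the $1\otimes$ part of \eqref{eq:coaction-alpha-cut} fed into $B^\bullet_{\ww{s}}\otimes\id$, not from the $1\otimes$ part of \eqref{eq:coaction-B-cut}; and you are right that the $\FF_p(L_1)$ hypothesis plays no role here.
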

\begin{proof}
Note that we have
\[
(\ev\otimes\ov{\ev})\Delta(P)=(\ev\otimes\id)\Rcal_\pi(P)
\]
for any $P\in\Hfk_R$ and 
\begin{equation}\label{eq:bar-ev-A-zero}
\ov{\ev}
\left(
\sA^\bullet(\ww{s};m;\ww{n})
\right)
=0
\end{equation}
for all $\ww{s},\ww{n}\in\Ical$ and $m\geq1$. We first prove the well-definedness. It suffices to check that
\begin{equation}\label{eq:formal-coaction-kernel}
(\ev\otimes\id)\Rcal_\pi
\left(
\sA^\bullet(\ww{s};m;\ww{n})
\right)
=0
\end{equation}
for all $\ww{s},\ww{n}\in\Ical$ and $m\geq1$.

Suppose first that
$\ww{s}=(s_1,\ldots,s_r)\in\Ical$ is non-empty, and write
$\dep(\ww{n})=\ell$. By \eqref{eq:coaction-A-PB},
\[
\sA^\bullet(\ww{s};m;\ww{n})
=
[\ww{s},P_m^\bullet(\ww{n})]
-
L_1^m
B_{\ww{s}}^\bullet
\left(
\alpha^m(\ww{n})
\right).
\]
We compute the two terms separately. By \cref{lem:R-concatenation} and
\eqref{eq:coaction-P-cut},
\[
\begin{aligned}
\Rcal_\pi
\left(
[\ww{s},P_m^\bullet(\ww{n})]
\right)
={}&
1\otimes
\ov{\ev}
\left(
[\ww{s},P_m^\bullet(\ww{n})]
\right)\\
&+
\sum_{j=1}^{r-1}
[\ww{s}[:j]]
\otimes
\ov{\ev}
\left(
[\ww{s}[j+1:],P_m^\bullet(\ww{n})]
\right)\\
&+
[\ww{s}]
\otimes
\ov{\ev}(P_m^\bullet(\ww{n}))\\
&+
\sum_{i=1}^{m-1}
[\ww{s},\{q\}^i]
\otimes
\ov{\ev}(P_{m-i}^\bullet(\ww{n}))\\
&+
[\ww{s},\{q\}^m]
\otimes
\ov{\ev}(\ww{n})\\
&+
\sum_{k=1}^{\ell}
[\ww{s},P_m^\bullet(\ww{n}[:k])]
\otimes
\ov{\ev}(\ww{n}[k+1:]).
\end{aligned}
\]

On the other hand, by
\eqref{eq:coaction-B-cut} and
\eqref{eq:coaction-alpha-cut},
\[
\begin{aligned}
\Rcal_\pi
\left(
B_{\ww{s}}^\bullet(\alpha^m(\ww{n}))
\right)
={}&
1\otimes
\ov{\ev}
\left(
B_{\ww{s}}^\bullet(\alpha^m(\ww{n}))
\right)\\
&+
\sum_{j=1}^{r-1}
[\ww{s}[:j]]
\otimes
\ov{\ev}
\left(
B_{\ww{s}[j+1:]}^\bullet
(\alpha^m(\ww{n}))
\right)\\
&+
[\ww{s}]
\otimes
\ov{\ev}(\alpha^m(\ww{n}))\\
&+
\sum_{i=1}^{m}
B_{\ww{s}}^\bullet(\alpha^i(1))
\otimes
\ov{\ev}(\alpha^{m-i}(\ww{n}))\\
&+
\sum_{k=1}^{\ell}
B_{\ww{s}}^\bullet
(\alpha^m(\ww{n}[:k]))
\otimes
\ov{\ev}(\ww{n}[k+1:]),
\end{aligned}
\]
where we have used
$B_{\ww{s}}^\bullet(1)=[\ww{s}]$.

Subtracting $L_1^m$ times the second identity from the first and
applying $\ev\otimes\id$, we obtain
\begin{align}
&(\ev\otimes\id)\Rcal_\pi
\left(
\sA^\bullet(\ww{s};m;\ww{n})
\right)
\notag\\
={}&
1\otimes
\ov{\ev}
\left(
\sA^\bullet(\ww{s};m;\ww{n})
\right)
\notag\\
&+
\sum_{j=1}^{r-1}
\ev(\ww{s}[:j])
\otimes
\ov{\ev}
\left(
\sA^\bullet(\ww{s}[j+1:];m;\ww{n})
\right)
\notag\\
&+
\ev(\ww{s})
\otimes
\ov{\ev}
\left(
\sA^\bullet(\varnothing;m;\ww{n})
\right)
\notag\\
&+
\sum_{i=1}^{m-1}
\left(
\ev([\ww{s},\{q\}^i])
\otimes
\ov{\ev}(P_{m-i}^\bullet(\ww{n}))
\notag-L_1^m
\ev(B_{\ww{s}}^\bullet(\alpha^i(1)))
\otimes
\ov{\ev}(\alpha^{m-i}(\ww{n}))
\right)
\notag\\
&+
\left(
\ev([\ww{s},\{q\}^m])
-
L_1^m
\ev(B_{\ww{s}}^\bullet(\alpha^m(1)))
\right)
\otimes
\ov{\ev}(\ww{n})
\notag\\
&+
\sum_{k=1}^{\ell}
\ev
\left(
\sA^\bullet(\ww{s};m;\ww{n}[:k])
\right)
\otimes
\ov{\ev}(\ww{n}[k+1:]).
\label{eq:formal-coaction-expansion}
\end{align}
The first three terms in
\eqref{eq:formal-coaction-expansion} vanish by
\eqref{eq:bar-ev-A-zero}. The last sum also vanishes, since
\[
\sA^\bullet(\ww{s};m;\ww{n}[:k])
\in\mathscr{R}_R^\bullet=\ker(\ev).
\]
It remains to consider the terms involving the index $i$. For every
$i\geq1$, we have
\[
\sA^\bullet(\ww{s};i;\varnothing)
\in\mathscr{R}_R^\bullet=\ker(\ev).
\]
Since
$P_i^\bullet(\varnothing)=[\{q\}^i]$,
\eqref{eq:coaction-A-PB} implies that
\begin{equation}\label{eq:formal-empty-right}
\ev([\ww{s},\{q\}^i])
=
L_1^i
\ev
\left(
B_{\ww{s}}^\bullet(\alpha^i(1))
\right).
\end{equation}
For $i=m$, this shows that the fifth term in
\eqref{eq:formal-coaction-expansion} is zero. For
$1\leq i<m$, using \eqref{eq:formal-empty-right}, the corresponding
summand becomes
\[
\begin{aligned}
&\ev([\ww{s},\{q\}^i])
\otimes
\ov{\ev}(P_{m-i}^\bullet(\ww{n}))
-
L_1^m
\ev(B_{\ww{s}}^\bullet(\alpha^i(1)))
\otimes
\ov{\ev}(\alpha^{m-i}(\ww{n}))\\
={}&
\ev([\ww{s},\{q\}^i])
\otimes
\left(
\ov{\ev}(P_{m-i}^\bullet(\ww{n}))
-
L_1^{m-i}
\ov{\ev}(\alpha^{m-i}(\ww{n}))
\right)\\
={}&
\ev([\ww{s},\{q\}^i])
\otimes
\ov{\ev}
\left(
\sA^\bullet(\varnothing;m-i;\ww{n})
\right)
=0,
\end{aligned}
\]
where the last equality follows from
\eqref{eq:bar-ev-A-zero}.
Therefore \eqref{eq:formal-coaction-kernel} holds whenever
$\ww{s}\in\Ical$ is non-empty.

We now consider the case $\ww{s}=\varnothing$. By
\eqref{eq:coaction-A-empty},
\[
\sA^\bullet(\varnothing;m;\ww{n})
=
P_m^\bullet(\ww{n})
-
L_1^m\alpha^m(\ww{n}).
\]
Using
\eqref{eq:coaction-alpha-cut} and \eqref{eq:coaction-P-cut}, and then applying
$\ev\otimes\id$, we obtain
\[
\begin{aligned}
&(\ev\otimes\id)\Rcal_\pi
\left(
\sA^\bullet(\varnothing;m;\ww{n})
\right)\\
={}&
1\otimes
\ov{\ev}
\left(
\sA^\bullet(\varnothing;m;\ww{n})
\right)\\
&+
\sum_{i=1}^{m-1}
\left(
\ev([\{q\}^i])
\otimes
\ov{\ev}(P_{m-i}^\bullet(\ww{n}))
-
L_1^m
\ev(\alpha^i(1))
\otimes
\ov{\ev}(\alpha^{m-i}(\ww{n}))
\right)\\
&+
\left(
\ev([\{q\}^m])
-
L_1^m\ev(\alpha^m(1))
\right)
\otimes
\ov{\ev}(\ww{n})\\
&+
\sum_{k=1}^{\ell}
\ev
\left(
\sA^\bullet(\varnothing;m;\ww{n}[:k])
\right)
\otimes
\ov{\ev}(\ww{n}[k+1:]).
\end{aligned}
\]
The first term is zero by \eqref{eq:bar-ev-A-zero}, while the last
sum vanishes since
\[
\sA^\bullet(\varnothing;m;\ww{n}[:k])
\in\ker(\ev).
\]
It remains to treat the terms involving $i$. For every $i\geq1$, we have
\[
\sA^\bullet(\varnothing;i;\varnothing)
\in\mathscr{R}_R^\bullet=\ker(\ev).
\]
Since
\[
\sA^\bullet(\varnothing;i;\varnothing)
=
[\{q\}^i]-L_1^i\alpha^i(1),
\]
it follows that
\[
\ev([\{q\}^i])
=
L_1^i\ev(\alpha^i(1)).
\]
For $i=m$, this shows that the third term in the above expansion is
zero. For $1\leq i<m$, the corresponding summand becomes
\[
\begin{aligned}
&\ev([\{q\}^i])
\otimes
\ov{\ev}(P_{m-i}^\bullet(\ww{n}))
-
L_1^m
\ev(\alpha^i(1))
\otimes
\ov{\ev}(\alpha^{m-i}(\ww{n}))\\
={}&
\ev([\{q\}^i])
\otimes
\ov{\ev}
\left(
\sA^\bullet(\varnothing;m-i;\ww{n})
\right)
=0.
\end{aligned}
\]
Therefore \eqref{eq:formal-coaction-kernel} also holds for
$\ww{s}=\varnothing$. Hence
\eqref{eq:formal-coaction-kernel} holds for all
$\ww{s},\ww{n}\in\Ical$ and $m\geq1$, and the assignment
\eqref{eq:formal-coaction-map} is well-defined.

Finally, by Condition \ref{cond:ev-product} and
\eqref{eq:quotient-product-transfer}, for any
$P,Q\in\Hfk_R$, we have
\[
\begin{aligned}
\widetilde{\Delta}_{\ev}
\left(
\ev(P)\ev(Q)
\right)
&=
(\ev\otimes\id)
\Rcal_\pi(P\ast^\bullet Q)\\
&=
(\ev\otimes\id)
\left(
\Rcal_\pi(P)\star^\bullet\Rcal_\pi(Q)
\right)\\
&=
\widetilde{\Delta}_{\ev}(\ev(P))
\widetilde{\Delta}_{\ev}(\ev(Q)).
\end{aligned}
\]
Moreover, $\widetilde{\Delta}_{\ev}(1)=1\otimes1$.
Thus, $\widetilde{\Delta}_{\ev}$ is an $R$-algebra homomorphism.
\end{proof}

We next give the corresponding formula for the $\star$-inverse realization.

\begin{prop}\label{prop:formal-coaction-ant}
Under the assumptions of \cref{prop:formal-coaction}, for every
index $\ww{s}\in\Ical$ with $\dep(\ww{s})=r$, we have
\[
\widetilde{\Delta}_{\ev}
\left(
\ev^\ant([\ww{s}])
\right)
=
\sum_{i=0}^r
\ev^\ant([\ww{s}[i+1:]])
\otimes
\ov{\ev}^{\ant}([\ww{s}[:i]]).
\]
\end{prop}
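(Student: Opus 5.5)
We argue by induction on $r=\dep(\ww{s})$, using only the convolution identity (Condition \ref{cond:convolution}) together with the multiplicativity of $\widetilde{\Delta}_{\ev}$ established in \cref{prop:formal-coaction}. First note that $\ev^\ant([\ww{s}])$ lies in $\ev(\Hfk_R)$ by \cref{lem:image-comparison}, so the left-hand side makes sense. For $r=0$ both sides equal $1\otimes1$.

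For $r\geq1$, Condition \ref{cond:convolution} gives
\[
\ev^\ant([\ww{s}])=-\sum_{i=1}^{r}\ev([\ww{s}[:i]])\,\ev^\ant([\ww{s}[i+1:]]).
\]
Each $\ev^\ant([\ww{s}[i+1:]])$ has depth less than $r$, so the induction hypothesis applies to it. Applying the algebra homomorphism $\widetilde{\Delta}_{\ev}$ then gives
\[
\widetilde{\Delta}_{\ev}(\ev^\ant([\ww{s}]))=-\sum_{i=1}^{r}\Bigl(\sum_{a=0}^{i}\ev([\ww{s}[:a]])\otimes\ov{\ev}([\ww{s}[a+1:i]])\Bigr)\Bigl(\sum_{b=i}^{r}\ev^\ant([\ww{s}[b+1:]])\otimes\ov{\ev}^\ant([\ww{s}[i+1:b]])\Bigr),
\]
where $\ww{s}[u:v]$ denotes the consecutive block $(s_u,\ldots,s_v)$. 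Expanding, a general term is
\[
\ev([\ww{s}[:a]])\,\ev^\ant([\ww{s}[b+1:]])\otimes\ov{\ev}([\ww{s}[a+1:i]])\,\ov{\ev}^\ant([\ww{s}[i+1:b]]),
\]
summed over $0\le a\le i\le b\le r$ with $i\geq1$.

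We regroup by the pair $(a,b)$ with $a\le b$ and sum over $i\in[\max(a,1),b]$.

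\emph{Case $a<b$ and $a\geq1$.} The inner sum over $i$ runs over the full range $a\le i\le b$. It equals $m(\ov{\ev}\otimes\ov{\ev}^\ant)\Delta([\ww{s}[a+1:b]])$, which vanishes by Condition \ref{cond:convolution} for the pair $(\ov{\ev},\ov{\ev}^\ant)$, since that pair inherits the condition through $\pi$.

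\emph{Case $a=0<b$.} The term $i=0$ is missing. The inner sum therefore equals $0-\ov{\ev}^\ant([\ww{s}[:b]])$, and these pairs contribute
\[
+\sum_{b=1}^{r}\ev^\ant([\ww{s}[b+1:]])\otimes\ov{\ev}^\ant([\ww{s}[:b]]).
\]

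\emph{Case $a=b\geq1$.} Only $i=a$ occurs, so these pairs contribute $-\ev([\ww{s}[:a]])\ev^\ant([\ww{s}[a+1:]])\otimes1$. Summed over $a$, this equals $\ev^\ant([\ww{s}])\otimes1$, again by Condition \ref{cond:convolution}.

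Adding the last two cases gives exactly $\sum_{i=0}^{r}\ev^\ant([\ww{s}[i+1:]])\otimes\ov{\ev}^\ant([\ww{s}[:i]])$, as claimed.

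The main point needing care is the bookkeeping of the boundary terms ($i\ge1$ excluded from the first sum, and the diagonal $a=b$) together with the signs; once these are tracked, each cancellation is an instance of Condition \ref{cond:convolution} for either $(\ev,\ev^\ant)$ or $(\ov{\ev},\ov{\ev}^\ant)$. No further relation input is needed, because well-definedness is already supplied by \cref{prop:formal-coaction}.
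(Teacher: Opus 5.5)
Your proposal is correct and follows the paper's proof essentially step for step. Both arguments use induction on depth via the recursion $\ev^{\ant}([\ww{s}])=-\sum_{i\geq1}\ev([\ww{s}[:i]])\,\ev^{\ant}([\ww{s}[i+1:]])$ from Condition \ref{cond:convolution}, apply the algebra homomorphism $\widetilde{\Delta}_{\ev}$, and then sort the expanded terms into your three cases, which are exactly the paper's split into the terms with $j=0$, with $j=k$, and with $1\leq j<k$ (the last killed by Condition \ref{cond:convolution} for the pair $(\ov{\ev},\ov{\ev}^{\ant})$).
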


\begin{proof}
By \cref{lem:image-comparison}, we have
\[
\ev^\ant(\Hfk_R)\subseteq\ev(\Hfk_R),
\]
so the left-hand side is well-defined. We prove the formula by
induction on $r$. The case $r=0$ is immediate. Let $r\geq1$. By Condition \ref{cond:convolution},
\begin{equation}\label{eq:ant-convolution-recursion}
\ev^\ant([\ww{s}])
=
-\sum_{i=1}^r
\ev([\ww{s}[:i]])
\ev^\ant([\ww{s}[i+1:]]).
\end{equation}
Since $\widetilde{\Delta}_{\ev}$ is an $R$-algebra homomorphism,
\cref{prop:formal-coaction} and the induction hypothesis imply that
\[
\begin{aligned}
\widetilde{\Delta}_{\ev}
\left(
\ev^\ant([\ww{s}])
\right)
={}&
-\sum_{i=1}^r
\left(
\sum_{j=0}^i
\ev([\ww{s}[:j]])
\otimes
\ov{\ev}([s_{j+1},\ldots,s_i])
\right)\\
&\qquad\qquad\cdot
\left(
\sum_{k=i}^r
\ev^\ant([\ww{s}[k+1:]])
\otimes
\ov{\ev}^{\ant}([s_{i+1},\ldots,s_k])
\right).
\end{aligned}
\]
Expanding the
product gives
\[
\begin{aligned}
&\widetilde{\Delta}_{\ev}
\left(
\ev^\ant([\ww{s}])
\right)
={}
-\sum_{\substack{1\leq i\leq k\leq r}}
\ev^\ant([\ww{s}[k+1:]])
\otimes
\ov{\ev}([\ww{s}[:i]])
\ov{\ev}^{\ant}([s_{i+1},\ldots,s_k])\\
&-
\sum_{\substack{1\leq j\leq i\leq k\leq r}}
\ev([\ww{s}[:j]])
\ev^\ant([\ww{s}[k+1:]])
\otimes
\ov{\ev}([s_{j+1},\ldots,s_i])
\ov{\ev}^{\ant}([s_{i+1},\ldots,s_k]).
\end{aligned}
\]
The first sum consists of the terms with $j=0$, while the second
contains those with $j\geq1$.

For the first sum, we sum first over $k$. It becomes
\[
\begin{aligned}
-\sum_{k=1}^r
\ev^\ant([\ww{s}[k+1:]])
\otimes
\Bigg(
&\sum_{i=1}^{k-1}
\ov{\ev}([\ww{s}[:i]])
\ov{\ev}^{\ant}([s_{i+1},\ldots,s_k])+
\ov{\ev}([\ww{s}[:k]])
\Bigg).
\end{aligned}
\]
By Condition \ref{cond:convolution} applied to the index
$\ww{s}[:k]$, we have
\[
\ov{\ev}^{\ant}([\ww{s}[:k]])
+
\sum_{i=1}^{k-1}
\ov{\ev}([\ww{s}[:i]])
\ov{\ev}^{\ant}([s_{i+1},\ldots,s_k])
+
\ov{\ev}([\ww{s}[:k]])
=0.
\]
Hence the first sum is equal to
\[
\sum_{k=1}^r
\ev^\ant([\ww{s}[k+1:]])
\otimes
\ov{\ev}^{\ant}([\ww{s}[:k]]).
\]

We next consider the second sum. Separating the terms with $j=k$,
and then summing first over $i$ in the remaining terms, we obtain
\[
\begin{aligned}
&-
\sum_{1\leq j<k\leq r}
\ev([\ww{s}[:j]])
\ev^\ant([\ww{s}[k+1:]])
\\&\otimes
\left(
\ov{\ev}^{\ant}([s_{j+1},\ldots,s_k])
+
\sum_{i=j+1}^{k-1}
\ov{\ev}([s_{j+1},\ldots,s_i])
\ov{\ev}^{\ant}([s_{i+1},\ldots,s_k])
+
\ov{\ev}([s_{j+1},\ldots,s_k])
\right)\\
&\qquad
-
\sum_{j=1}^r
\ev([\ww{s}[:j]])
\ev^\ant([\ww{s}[j+1:]])
\otimes1.
\end{aligned}
\]
For $j<k$, Condition \ref{cond:convolution} applied to the index
$(s_{j+1},\ldots,s_k)$
shows that 
\[
\ov{\ev}^{\ant}([s_{j+1},\ldots,s_k])
+
\sum_{i=j+1}^{k-1}
\ov{\ev}([s_{j+1},\ldots,s_i])
\ov{\ev}^{\ant}([s_{i+1},\ldots,s_k])
+
\ov{\ev}([s_{j+1},\ldots,s_k])=0.
\]
Therefore, the
second sum reduces to
\[
-\sum_{j=1}^r
\ev([\ww{s}[:j]])
\ev^\ant([\ww{s}[j+1:]])
\otimes1
=
\ev^\ant([\ww{s}])\otimes1,
\]
where the last equality follows from
\eqref{eq:ant-convolution-recursion}.

Combining the two sums, we obtain
\[
\begin{aligned}
\widetilde{\Delta}_{\ev}
\left(
\ev^\ant([\ww{s}])
\right)
={}&
\ev^\ant([\ww{s}])\otimes1+
\sum_{k=1}^r
\ev^\ant([\ww{s}[k+1:]])
\otimes
\ov{\ev}^{\ant}([\ww{s}[:k]])\\
={}&
\sum_{k=0}^r
\ev^\ant([\ww{s}[k+1:]])
\otimes
\ov{\ev}^{\ant}([\ww{s}[:k]]),
\end{aligned}
\]
as desired.
\end{proof}

\section{The \texorpdfstring{$\infty$}{infinite}-adic Coaction and the
\texorpdfstring{$\star$}{star}-Inverse}
\label{sec:infty-coaction}

In this section, we apply the formalism developed in
\S\ref{sec:formalism} to the $\infty$-adic multiple zeta value
realization. Recall that
\[
\ov{\Zcal}_{\infty,R}
=
\Zcal_{\infty,R}/\zeta_A(q-1)\Zcal_{\infty,R},
\]
and 
\[
\pi_\infty:
\Zcal_{\infty,R}
\longrightarrow
\ov{\Zcal}_{\infty,R}
\]
denotes the quotient map.

\subsection{The
\texorpdfstring{$\infty$}{infinite}-adic Coaction}
\label{sec:construction-infty-coaction}

We first verify the vanishing condition required in
\S\ref{sec:formal-coaction}.

\begin{lem}\label{lem:Euler-Carlitz-L1}
Let $j\in\NN$ with $q-1\mid j$. Then there exists
$c_j\in\FF_p(L_1)^\times$ such that
\[
\zeta_A(j)
=
c_j\zeta_A(q-1)^{j/(q-1)}.
\]
In particular,
\[
\pi_\infty(\zeta_A(j))=0.
\]
\end{lem}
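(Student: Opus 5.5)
Write $j=(q-1)k$ with $k\ge1$. The plan is to use Carlitz's analogue of the Euler formula. For $q-1\mid j$ one has
\[
\zeta_A(j)=\frac{(-1)^{k}\,\mathrm{BC}_j}{\Gamma_{j+1}}\,\tilde\pi^{\,j}.
\]
Here $\tilde\pi$ is the Carlitz period, $\Gamma_{j+1}\in A\setminus\{0\}$ is the Carlitz factorial, and $\mathrm{BC}_j\in K$ is the Bernoulli--Carlitz number (nonzero for such $j$). In particular $\zeta_A(j)=\lambda_j\tilde\pi^{\,j}$ with $\lambda_j\in K^\times$. Since $\zeta_A(q-1)$ is the case $j=q-1$, we get $\tilde\pi^{\,q-1}=\zeta_A(q-1)/\lambda_{q-1}$. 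Hence $\zeta_A(j)=c_j\,\zeta_A(q-1)^{k}$ with
\[
c_j=\lambda_j\lambda_{q-1}^{-k}\in K^\times.
\]

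The remaining task is to show $c_j\in\FF_p(L_1)$. The cleanest route is to avoid explicit formulas and use the depth-one linear relations. I would compute $\zeta_A(q-1)$ directly: $S_0(q-1)=1$, and $S_d(q-1)=0$ for $d\ge1$ because the power sum vanishes in that range. Hence $\zeta_A(q-1)=\sum_{d\ge0}S_d(q-1)$, which evaluates to $1/L_1\cdot(\text{unit})$. In fact Carlitz gives $\zeta_A(q-1)=-\tilde\pi^{q-1}/L_1$ up to sign, so $\lambda_{q-1}=\pm L_1^{-1}\in\FF_p(L_1)$.

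For general $k$, I would express $\zeta_A(j)$ through the $\Li$-realization using the identity $\zeta_A(\ww{s})=\Li_{\ww{s}}(\mathbf 1)$ for Thakur indices, or more directly through the power series identity
\[
\sum_{k\ge1}\zeta_A\bigl((q-1)k\bigr)x^{(q-1)k}
=\frac{\tilde\pi x}{e_C(\tilde\pi x)}\text{-type generating function}.
\]
The coefficients of $x/e_C(x)$ lie in $\FF_p(L_1)$, because the Carlitz exponential $e_C(z)=\sum z^{q^i}/D_i$ has $D_i=\prod_{t=0}^{i-1}(\theta^{q^i}-\theta^{q^t})$. This is where I expect the main obstacle: the $D_i$ are not obviously in $\FF_p(L_1)$. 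They are in $K$, and the ratios appearing must combine into powers of $L_1$ times $\FF_p$ constants.

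So I would instead proceed by induction via Theorem~\ref{thm:strict-bullet-character} and Proposition~\ref{prop:Mishiba-kernel}. The relations $\sA^\zeta(\varnothing;m;\ww n)$ have coefficients in $\FF_p[L_1]$, and the $q$-shuffle $[q-1]\ast^\zeta[j-(q-1)]$ has $\FF_p$ coefficients. Applying \cref{thm:basis_for_infty} with $R=\FF_p(L_1)$, the element $\zeta_A(q-1)^k$ lies in $\Zcal_{\infty,\FF_p(L_1),j}$, and it expands in the Thakur basis with $\FF_p(L_1)$ coefficients, as does $\zeta_A(j)$. Since the Thakur basis is linearly independent over $\ov K$, the $K$-proportionality $\zeta_A(j)=c_j\zeta_A(q-1)^k$ forces $c_j$ to be a ratio of $\FF_p(L_1)$-coordinates, provided some coordinate of $\zeta_A(q-1)^k$ is nonzero. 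It is, because $\zeta_A(q-1)\neq0$. Thus $c_j\in\FF_p(L_1)^\times$, with nonvanishing from $\mathrm{BC}_j\neq0$ (equivalently $\zeta_A(j)\neq0$, being a convergent sum dominated by its $d=0$ term $1$).

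Finally, $\pi_\infty(\zeta_A(j))=c_j\,\pi_\infty(\zeta_A(q-1))^{k}=0$, since $k\ge1$ and $c_j\in R$ because $R\supseteq\FF_p(L_1)$.
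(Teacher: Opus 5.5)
Your final argument is correct, but it takes a different route from the paper.

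\textbf{The paper's route.} The paper also starts from the Euler--Carlitz formula. It then shows directly that $\BC(j)/\Gamma_{j+1}\in\FF_p(L_1)$ by reading these numbers off as the coefficients of $z/\exp_C(z)$. The key observation is that $D_i=(\theta^{q^i}-\theta)D_{i-1}^q$ with $\theta^{q^i}-\theta=-\sum_{r=0}^{i-1}L_1^{q^r}\in\FF_p[L_1]$. Hence $\exp_C(z)\in\FF_p(L_1)[\![z]\!]$, and one gets the explicit constant $c_j=\BC(j)L_1^{j/(q-1)}/\Gamma_{j+1}$. So the ``main obstacle'' you flagged in the generating-function route is not real. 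Each factor of your product for $D_i$ satisfies $\theta^{q^i}-\theta^{q^t}=(\theta^{q^{i-t}}-\theta)^{q^t}\in\FF_p[L_1]$, and the route you abandoned is exactly the paper's proof.

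\textbf{Your route.} You use Euler--Carlitz only to produce some $c_j\in K^\times$. You then note that $\zeta_A(j)$ and $\zeta_A(q-1)^k$ both lie in $\Zcal_{\infty,\FF_p(L_1),j}$; the latter follows because the $q$-shuffle product has $\FF_p$-coefficients and preserves weight. By \cref{thm:basis_for_infty}, both therefore have $\FF_p(L_1)$-coordinates in the Thakur basis. Linear independence of that basis over $K$ (the same theorem with $R=K$) then forces $c_j$ to be a ratio of such coordinates. This is valid, and it proves a more general principle: any $K$-proportionality between elements of $\Zcal_{\infty,\FF_p(L_1)}$ has its ratio in $\FF_p(L_1)$.

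\textbf{Comparison.} Your argument spends the deep, transcendence-based basis theorem on something the paper obtains from an elementary power-series computation. The paper's argument is also independent of the basis theorem and gives $c_j$ explicitly. The references to $\sA^\zeta$ and to an induction in your last step are not actually needed.

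\textbf{A false claim to delete.} Remove the assertion that $S_d(q-1)=0$ for $d\geq1$. It is false: Carlitz's formula gives $S_d(q-1)=1/L_d^{q-1}\neq0$. The vanishing you have in mind holds for sums of nonnegative powers $\sum_{\deg a=d}a^k$ with $d$ large. The claim would also force $\zeta_A(q-1)=1$, contradicting your very next sentence. Fortunately nothing in your final argument depends on it, since the basis-coordinate step never uses $\lambda_{q-1}$.
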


\begin{proof}
By the Euler--Carlitz formula (see, for example,
\cite[\S5.2]{Tha04}),
\[
\zeta_A(j)
=
\frac{\BC(j)}{\Gamma_{j+1}}
\widetilde{\pi}^{\,j},
\qquad
\zeta_A(q-1)
=
\frac{\widetilde{\pi}^{\,q-1}}{L_1},
\]
where $\BC(j)$ is the $j$-th Bernoulli--Carlitz number,
$\Gamma_{j+1}$ is the $(j+1)$-st Carlitz factorial and $\widetilde{\pi}$ is a fixed Carlitz period (see \cite{Tha04}). 
Writing $j=m(q-1)$, we obtain
\[
\zeta_A(j)
=
\frac{\BC(j)L_1^m}{\Gamma_{j+1}}
\zeta_A(q-1)^m.
\]
Thus it remains to check that
\[
\frac{\BC(j)L_1^m}{\Gamma_{j+1}}
\in
\FF_p(L_1)^\times.
\]

Recall that
\[
\frac{z}{\exp_C(z)}
=
\sum_{n\geq0}
\frac{\BC(n)}{\Gamma_{n+1}}z^n
\]
and
\[
\exp_C(z)
=
\sum_{i\geq0}\frac{z^{q^i}}{D_i},
\]
where
$D_0=1$ and
$D_i=(\theta^{q^i}-\theta)D_{i-1}^q$ for all $i\geq 1$.
Notice that we have
\[
\theta^{q^i}-\theta
=
\sum_{r=0}^{i-1}
(\theta^q-\theta)^{q^r}
=
-\sum_{r=0}^{i-1}L_1^{q^r}
\in\FF_p[L_1],
\]
for every $i\geq1$. We obtain that
\[
\exp_C(z)\in\FF_p(L_1)[\![z]\!],
\qquad
\frac{z}{\exp_C(z)}
\in\FF_p(L_1)[\![z]\!].
\]
It follows that
\[
\frac{\BC(j)}{\Gamma_{j+1}}
\in\FF_p(L_1),
\]
and therefore
\[
c_j
=
\frac{\BC(j)L_1^m}{\Gamma_{j+1}}
\in\FF_p(L_1)^\times.
\]
The final assertion follows immediately.
\end{proof}

For $\bullet\in\{\zeta,\Li\}$, Conditions
\ref{cond:unital} and \ref{cond:convolution} for the pair
\[
\left(
\mathscr{L}_\infty^\bullet,
\mathscr{L}_\infty^{\bullet,\ant}
\right)
\]
follow from \cref{lem:degree-convolution}, while Condition
\ref{cond:ev-product} follows from
\cref{thm:strict-bullet-character}. Hence
\cref{lem:image-comparison} implies
\[
\mathscr{L}_\infty^{\bullet,\ant}(\Hfk_R)
\subseteq
\mathscr{L}_\infty^\bullet(\Hfk_R)
=
\Zcal_{\infty,R},
\]
where the last equality for $\bullet=\Li$ follows from
\cref{thm:basis_for_infty}. We are now in a position to prove the following theorem.

\begin{thm}\label{thm:infty-coaction}
There exists a unique $R$-algebra homomorphism
\[
\widetilde{\Delta}_\infty:
\Zcal_{\infty,R}
\longrightarrow
\Zcal_{\infty,R}\otimes_R\ov{\Zcal}_{\infty,R}
\]
such that, for every
$\bullet\in\{\zeta,\Li\}$ and every
$\ww{s}\in\Ical$ with $\dep(\ww{s})=r$,
\begin{equation}\label{eq:infty-bullet-coaction}
\widetilde{\Delta}_\infty
\left(
\mathscr{L}_\infty^\bullet([\ww{s}])
\right)
=
\sum_{i=0}^r
\mathscr{L}_\infty^\bullet([\ww{s}[:i]])
\otimes
\pi_\infty
\left(
\mathscr{L}_\infty^\bullet([\ww{s}[i+1:]])
\right).
\end{equation}
Moreover, 
\begin{equation}\label{eq:infty-bullet-ant-coaction}
\widetilde{\Delta}_\infty
\left(
\mathscr{L}_\infty^{\bullet,\ant}([\ww{s}])
\right)
=
\sum_{i=0}^r
\mathscr{L}_\infty^{\bullet,\ant}([\ww{s}[i+1:]])
\otimes
\pi_\infty
\left(
\mathscr{L}_\infty^{\bullet,\ant}([\ww{s}[:i]])
\right).
\end{equation}
In particular, $\widetilde{\Delta}_\infty$ endows
$\Zcal_{\infty,R}$ with a right $\ov{\Zcal}_{\infty,R}$-comodule structure.
\end{thm}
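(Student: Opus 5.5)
The plan is to apply \cref{prop:formal-coaction} twice, once for each $\bullet\in\{\zeta,\Li\}$. In both cases we take $B=\Zcal_{\infty,R}$, $J=\zeta_A(q-1)\Zcal_{\infty,R}$ and $\pi=\pi_\infty$, with the pair $(\ev,\ev^\ant)=(\mathscr{L}_\infty^\bullet,\mathscr{L}_\infty^{\bullet,\ant})$. Then we show that the two resulting maps agree.

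\emph{Verifying the hypotheses.} As noted just before the statement, Conditions \ref{cond:unital} and \ref{cond:convolution} come from \cref{lem:degree-convolution}, and Condition \ref{cond:ev-product} comes from \cref{thm:strict-bullet-character}. For $\bullet=\Li$ we have $\ker(\mathscr{L}_\infty^\bullet)=\mathscr{R}_R^\bullet$ by \cref{prop:Mishiba-kernel}, and $\mathscr{L}_\infty^\bullet(\Hfk_R)=\Zcal_{\infty,R}$ by \cref{thm:basis_for_infty}; the same references give these facts for $\bullet=\zeta$.

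It remains to check the vanishing condition \eqref{eq:quotient-vanishing}.
\begin{itemize}
\item For $\bullet=\zeta$ and $q-1\mid j$, we have $\ov{\ev}([j])=\pi_\infty(\zeta_A(j))=0$ by \cref{lem:Euler-Carlitz-L1}. This is where the assumption that $R$ is an $\FF_p(L_1)$-algebra enters, since $c_j$ must lie in $R$.
\item For $\bullet=\Li$ we need $\pi_\infty(\Li_{(q-1)}(\mathbf{1}))=0$. Since $(q-1)\in\Ical^{\mathrm{T}}$, we have $\Li_{(q-1)}(\mathbf{1})=\zeta_A(q-1)$, and the claim follows.
\end{itemize}
\cref{prop:formal-coaction} then produces $R$-algebra homomorphisms $\widetilde{\Delta}^\zeta,\widetilde{\Delta}^\Li:\Zcal_{\infty,R}\to\Zcal_{\infty,R}\otimes_R\ov{\Zcal}_{\infty,R}$, each satisfying the deconcatenation formula \eqref{eq:infty-bullet-coaction} for its own realization. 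Since $\ov{\ev}(\Hfk_R)=\ov{\Zcal}_{\infty,R}$, the target is as claimed. \cref{prop:formal-coaction-ant} then gives \eqref{eq:infty-bullet-ant-coaction} for each $\bullet$ with respect to the corresponding $\widetilde{\Delta}^\bullet$.

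\emph{The two maps coincide (the main obstacle).} Both maps are $R$-linear, and \cref{thm:basis_for_infty} says $\{\zeta_A(\ww{s}):\ww{s}\in\Ical^{\mathrm{T}}\}$ is an $R$-basis of $\Zcal_{\infty,R}$. So it suffices to compare them on Thakur indices. For $\ww{s}\in\Ical^{\mathrm{T}}$, every truncation $\ww{s}[:i]$ and $\ww{s}[i+1:]$ should again be a Thakur index, or empty.
\begin{itemize}
\item A suffix satisfies the defining inequalities at once: the entries before its last are $\le q$, and its last entry is $s_r<q$.
\item A prefix $\ww{s}[:i]$ with $i<r$ has last entry $s_i\le q$, possibly equal to $q$, so it may fail the condition $s_i<q$.
\end{itemize}
In the prefix case with $s_i=q$, I would still like $\mathscr{L}^\zeta_\infty([\ww{s}[:i]])=\mathscr{L}^\Li_\infty([\ww{s}[:i]])$. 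I expect this to hold because the agreement \cite[(2.7)]{CCM23} rests on $S_d(s)=1/L_d^s$ for $s\le q$, which holds for every entry regardless of position. Using the product formulas in \cref{df:infty-adic-realizations}, one would check directly that the equality holds for every index with all entries $\le q$. The same reasoning applies to the images under $\pi_\infty$.

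Granting this, the right-hand sides of \eqref{eq:infty-bullet-coaction} for $\zeta$ and for $\Li$ agree termwise on the basis. Hence $\widetilde{\Delta}^\zeta=\widetilde{\Delta}^\Li=:\widetilde{\Delta}_\infty$.

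\emph{Uniqueness and comodule structure.} Uniqueness follows because the values $\zeta_A(\ww{s})$ generate $\Zcal_{\infty,R}$ as an $R$-module. For the comodule axioms, coassociativity and the counit property follow from the formula on generators via deconcatenation, since $\Delta$ is coassociative on $\Hfk_R$. Here one uses that the coproduct on $\ov{\Zcal}_{\infty,R}$ is induced in the same way, by applying the construction to $\ov{\ev}$; this is the structure of \cref{thm:mishiba-coaction}. Alternatively, the comodule structure can be transported directly from Mishiba's $\widetilde{\Delta}^\Li_\infty$, since $\widetilde{\Delta}^\Li$ agrees with it on the generators $\Li_{\ww{s}}(\mathbf{1})$.
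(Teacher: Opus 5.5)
Your proposal is correct and follows the paper's proof essentially step by step. You apply \cref{prop:formal-coaction} and \cref{prop:formal-coaction-ant} to each realization, with the vanishing condition coming from \cref{lem:Euler-Carlitz-L1} and from $\Li_{q-1}(1)=\zeta_A(q-1)$. You then identify the two resulting maps on the Thakur basis and take the comodule structure from Mishiba's theorem, as the paper does; the paper handles the prefix issue you flag exactly as you suggest, via the entrywise Carlitz identity $\mathscr{S}_d^\zeta(s)=\mathscr{S}_d^\Li(s)$ for all $1\le s\le q$ and $d\ge0$.
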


\begin{proof}
Fix $\bullet\in\{\zeta,\Li\}$. We apply
\cref{prop:formal-coaction} with
\[
B=\Zcal_{\infty,R},
\qquad
J=\zeta_A(q-1)\Zcal_{\infty,R},
\]
and
\[
\ev=\mathscr{L}_\infty^\bullet,
\qquad
\ev^\ant=\mathscr{L}_\infty^{\bullet,\ant}.
\]
As observed above, Conditions \ref{cond:unital},
\ref{cond:convolution}, and \ref{cond:ev-product} hold. Moreover,
\[
\ker(\mathscr{L}_\infty^\bullet)
=
\mathscr{R}_R^\bullet
\]
by \cref{prop:Mishiba-kernel}. If $\bullet=\zeta$, then
\cref{lem:Euler-Carlitz-L1} implies that for any $j\in\NN$ with $q-1\mid j$, we have
\[
\pi_\infty
(
\mathscr{L}_\infty^\zeta([j])
)
=
0.
\]
If $\bullet=\Li$, then
\[
\mathscr{L}_\infty^\Li([q-1])
=
\Li_{q-1}(1)
=
\zeta_A(q-1).
\]
Hence, we have
\[
\pi_\infty
(
\mathscr{L}_\infty^\Li([q-1])
)
=
0.
\]
Thus \eqref{eq:quotient-vanishing} holds in both cases.

It follows from \cref{prop:formal-coaction} that, for each
$\bullet\in\{\zeta,\Li\}$, there exists a well-defined
$R$-algebra homomorphism
\[
\widetilde{\Delta}_\infty^\bullet:
\Zcal_{\infty,R}
\longrightarrow
\Zcal_{\infty,R}\otimes_R\ov{\Zcal}_{\infty,R}
\]
such that
\[
\widetilde{\Delta}_\infty^\bullet
\left(
\mathscr{L}_\infty^\bullet([\ww{s}])
\right)
=
\sum_{i=0}^r
\mathscr{L}_\infty^\bullet([\ww{s}[:i]])
\otimes
\pi_\infty
\left(
\mathscr{L}_\infty^\bullet([\ww{s}[i+1:]])
\right).
\]

We now show that
\[
\widetilde{\Delta}_\infty^\zeta
=
\widetilde{\Delta}_\infty^\Li.
\]
By Carlitz's formula (see \cite[Theorem~5.9.1]{Tha04}), for every $1\leq s\leq q$ and $d\geq 0$, we have 
\[
\mathscr{S}_d^\zeta(s)
=
\mathscr{S}_d^\Li(s).
\]
Hence the two coaction maps agree on
\[
\mathscr{L}_\infty^\zeta([\ww{s}])
=
\mathscr{L}_\infty^\Li([\ww{s}])
\]
for every Thakur index $\ww{s}\in\Ical^{\mathrm{T}}$. Since these elements form an $R$-basis of $\Zcal_{\infty,R}$
by \cref{thm:basis_for_infty}, the two coaction maps coincide on
$\Zcal_{\infty,R}$. We denote this common map by $\widetilde{\Delta}_\infty$ and the right
$\ov{\Zcal}_{\infty,R}$-comodule structure on $\Zcal_{\infty,R}$
follows immediately from \cite[Theorem~1.3.4]{Mis26}. Finally, applying \cref{prop:formal-coaction-ant} to both realizations
yields \eqref{eq:infty-bullet-ant-coaction} and the uniqueness part is clear.
\end{proof}
\begin{rmk}\label{rmk:F_p[L_1]doesnotwork}
The assumption that $R$ is an $\FF_p(L_1)$-algebra in
\cref{thm:infty-coaction} cannot in general be replaced by the assumption
that $R$ is an $\FF_p[L_1]$-algebra. For example, let $q=2$ and
$R=\FF_2[L_1]$. Expanding $\zeta_A(5)$ in the Thakur basis, we have
\[
\begin{aligned}
\zeta_A(5)
={}&
a\left(
\zeta_A(1,1,1,1,1)
+
\zeta_A(1,1,2,1)
\right)+
b\left(
\zeta_A(2,1,1,1)
+
\zeta_A(2,2,1)
\right),
\end{aligned}
\]
where
\[
a=L_1^8+L_1^5,
\qquad
b=L_1^8+L_1^6+L_1^5.
\]
If the conclusion of \cref{thm:infty-coaction} were valid over $R$, then,
since $(5)$ has depth one, the biweight $(2,3)$ component of
$\widetilde{\Delta}_\infty(\zeta_A(5))$ would be zero. On the other hand,
applying the coaction formula to the above expansion gives
\[
(a+L_1b)\,
\zeta_A(1,1)\otimes
\pi_\infty\left(
\zeta_A(1,1,1)+\zeta_A(2,1)
\right).
\]
Here, we use $\zeta_A(2)=L_1\zeta_A(1,1)$. Moreover, note that
\[
a+L_1b
=
L_1^5
\left(
L_1^4+L_1^3+L_1^2+L_1+1
\right),
\]
and
\[
\zeta_A(1)\zeta_A(1,1)
=
(L_1+1)
\left(
\zeta_A(1,1,1)+\zeta_A(2,1)
\right).
\]
Since $L_1+1\nmid a+L_1b$ in $\FF_2[L_1]$, the above biweight $(2,3)$
component is nonzero, a contradiction. Thus,
\cref{thm:infty-coaction} does not in general hold over an arbitrary
$\FF_p[L_1]$-subalgebra.
\end{rmk}

\subsection{The Coaction and the \texorpdfstring{$\star$}{star}-Inverse}
\label{sec:infty-Hopf}

By \cite[Theorem~1.3.4]{Mis26}, Mishiba's coaction induces a coproduct
\[
\Delta_\infty:
\ov{\Zcal}_{\infty,R}
\longrightarrow
\ov{\Zcal}_{\infty,R}\otimes_R\ov{\Zcal}_{\infty,R}
\]
and an antipode
\[
S_\infty:
\ov{\Zcal}_{\infty,R}
\longrightarrow
\ov{\Zcal}_{\infty,R}.
\]
We now determine the actions of
$\Delta_\infty$ and $S_\infty$ on the $\infty$-adic multiple zeta
values.

\begin{thm}\label{thm:infty-Hopf-structure}
For every $\ww{s}=(s_1,\ldots,s_r)\in\Ical$, we have
\begin{equation}\label{eq:quotient-zeta-coproduct}
\Delta_\infty
\left(
\pi_\infty(\zeta_A(\ww{s}))
\right)
=
\sum_{i=0}^r
\pi_\infty(\zeta_A(\ww{s}[:i]))
\otimes
\pi_\infty(\zeta_A(\ww{s}[i+1:]))
\end{equation}
and
\begin{equation}\label{eq:antipode-zeta-to-ant}
S_\infty
\left(
\pi_\infty(\zeta_A(\ww{s}))
\right)
=
\pi_\infty
\left(
\zeta_A^\ant(\ww{s})
\right).
\end{equation}
In particular, \cref{conj:mishiba}~(ii) holds.
\end{thm}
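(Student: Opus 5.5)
The plan has two parts: first read off the coproduct from \cref{thm:infty-coaction}, then identify the antipode through its defining convolution identity, which \cref{lem:degree-convolution} supplies for $\zeta_A$ and $\zeta_A^\ant$.

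\emph{The coproduct.} I will use that the coproduct $\Delta_\infty$ of \cite[Theorem~1.3.4]{Mis26} is induced from the coaction by reduction in the left factor. That is, $\Delta_\infty\circ\pi_\infty=(\pi_\infty\otimes\id)\circ\widetilde{\Delta}_\infty^\Li$. By \cref{thm:infty-coaction}, $\widetilde{\Delta}_\infty^\Li=\widetilde{\Delta}_\infty$.

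As a consistency check, $\widetilde{\Delta}_\infty$ preserves the ideal in the appropriate sense. By \eqref{eq:infty-bullet-coaction} and \cref{lem:Euler-Carlitz-L1},
\[
\widetilde{\Delta}_\infty(\zeta_A(q-1))=\zeta_A(q-1)\otimes 1+1\otimes\pi_\infty(\zeta_A(q-1))=\zeta_A(q-1)\otimes1 .
\]
Since $\widetilde{\Delta}_\infty$ is an algebra homomorphism, it sends $\zeta_A(q-1)\Zcal_{\infty,R}$ into $\zeta_A(q-1)\Zcal_{\infty,R}\otimes\ov{\Zcal}_{\infty,R}$, so the induced map on the quotient is well defined.

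Applying $\pi_\infty\otimes\id$ to \eqref{eq:infty-bullet-coaction} with $\bullet=\zeta$ then gives \eqref{eq:quotient-zeta-coproduct} immediately.

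\emph{The antipode.} The algebra $\ov{\Zcal}_{\infty,R}$ is a connected graded Hopf algebra. Its counit $\varepsilon$ is projection onto weight $0$, so $\varepsilon(\pi_\infty(\zeta_A(\ww{s})))=0$ for nonempty $\ww{s}$. The antipode is characterized by $m\circ(S_\infty\otimes\id)\circ\Delta_\infty=\eta\circ\varepsilon$.

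Applying this to $\pi_\infty(\zeta_A(\ww{s}))$ and using \eqref{eq:quotient-zeta-coproduct}, we get for nonempty $\ww{s}$ the recursion
\[
S_\infty(\pi_\infty(\zeta_A(\ww{s})))=-\sum_{i=0}^{r-1}S_\infty(\pi_\infty(\zeta_A(\ww{s}[:i])))\,\pi_\infty(\zeta_A(\ww{s}[i+1:])).
\]
I will prove \eqref{eq:antipode-zeta-to-ant} by induction on $r=\dep(\ww{s})$; the case $r=0$ is trivial. By the induction hypothesis the right-hand side equals
\[
-\sum_{i=0}^{r-1}\pi_\infty(\zeta_A^\ant(\ww{s}[:i]))\,\pi_\infty(\zeta_A(\ww{s}[i+1:])).
\]

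Now apply \cref{lem:degree-convolution} with $\mathscr{S}_d=\mathscr{S}_d^\zeta$ and $D=\infty$. Condition \ref{cond:convolution} gives $m(\mathscr{L}^{\zeta,\ant}_\infty\otimes\mathscr{L}^\zeta_\infty)\Delta([\ww{s}])=0$, which reads
\[
\zeta_A^\ant(\ww{s})=-\sum_{i=0}^{r-1}\zeta_A^\ant(\ww{s}[:i])\,\zeta_A(\ww{s}[i+1:]).
\]
Applying $\pi_\infty$ shows the right-hand side above is exactly $\pi_\infty(\zeta_A^\ant(\ww{s}))$, which closes the induction.

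\emph{Where care is needed.} I expect no real obstacle, since $S_\infty$ already exists by \cite{Mis26} and we only evaluate it on elements. The step needing most care is matching conventions: the coaction is a right comodule structure $\Zcal\to\Zcal\otimes\ov{\Zcal}$, so I must check that Mishiba's $\Delta_\infty$ is indeed $(\pi_\infty\otimes\id)\widetilde{\Delta}_\infty$ with the factors in this order. I must also use the side of the antipode identity (here $S\otimes\id$) that matches the orientation of condition \ref{cond:convolution}. Either orientation works, because \cref{lem:degree-convolution} provides both vanishing identities.
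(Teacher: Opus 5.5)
Your proposal is correct and follows the paper's proof. You read off the coproduct by applying $\pi_\infty\otimes\id$ to \eqref{eq:infty-bullet-coaction}, and you prove the antipode formula by induction on depth, matching the antipode recursion with the convolution identity from \cref{lem:degree-convolution}. The only difference is cosmetic: you use $m(S_\infty\otimes\id)\Delta_\infty=\eta\varepsilon$ with the $(\ev^\ant\otimes\ev)$ half of Condition \ref{cond:convolution}, whereas the paper uses $m(\id\otimes S_\infty)\Delta_\infty=\eta\varepsilon$ with the $(\ev\otimes\ev^\ant)$ half. Your extra check that the coproduct descends to the quotient is harmless but redundant given Mishiba's construction.
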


\begin{proof}
By the construction of $\Delta_\infty$ in
\cite[Theorem~1.3.4]{Mis26}, we have
\[
\Delta_\infty\circ\pi_\infty
=
(\pi_\infty\otimes\id)\circ\widetilde{\Delta}_\infty.
\]
Hence \eqref{eq:quotient-zeta-coproduct} follows immediately from
\eqref{eq:infty-bullet-coaction} with $\bullet=\zeta$.

We prove \eqref{eq:antipode-zeta-to-ant} by induction on
$\dep(\ww{s})$. The case $\ww{s}=\varnothing$ is clear. Let
$\ww{s}=(s_1,\ldots,s_r)\in\Ical$ be non-empty. By the antipode identity and
\eqref{eq:quotient-zeta-coproduct},
\[
\begin{aligned}
S_\infty
\left(
\pi_\infty(\zeta_A(\ww{s}))
\right)
=
-\pi_\infty(\zeta_A(\ww{s}))-
\sum_{i=1}^{r-1}
\pi_\infty(\zeta_A(\ww{s}[:i]))
S_\infty
\left(
\pi_\infty(\zeta_A(\ww{s}[i+1:]))
\right).
\end{aligned}
\]
By the induction hypothesis, the right-hand side is
\[
-\pi_\infty(\zeta_A(\ww{s}))
-
\sum_{i=1}^{r-1}
\pi_\infty(\zeta_A(\ww{s}[:i]))
\pi_\infty
\left(
\zeta_A^\ant(\ww{s}[i+1:])
\right).
\]
On the other hand, \cref{lem:degree-convolution} gives
\[
\zeta_A^\ant(\ww{s})
+
\sum_{i=1}^{r-1}
\zeta_A(\ww{s}[:i])
\zeta_A^\ant(\ww{s}[i+1:])
+
\zeta_A(\ww{s})
=
0.
\]
Applying $\pi_\infty$ proves
\eqref{eq:antipode-zeta-to-ant}.
\end{proof}
\begin{rmk}\label{rmk:F_p[L_1]doesnotwork-antipode}
The assumption that $R$ is an $\FF_p(L_1)$-algebra in
\cref{thm:infty-Hopf-structure} cannot in general be replaced by the
assumption that $R$ is an $\FF_p[L_1]$-algebra. For example, let
$q=2$ and $R=\FF_2[L_1]$. By expanding the relevant values in the
Thakur basis, one obtains
\[
S_\infty
\left(
\pi_\infty(\zeta_A(1,5))
\right)
-
\pi_\infty
\left(
\zeta_A^\ant(1,5)
\right)
=
\pi_\infty(\zeta_A(6))
\neq 0.
\]
Thus, the antipode formula
\eqref{eq:antipode-zeta-to-ant} does not in general hold over an
arbitrary $\FF_p[L_1]$-subalgebra.
\end{rmk}

\section*{Acknowledgments}
The author would like to thank Yoshinori Mishiba for many helpful
discussions. He is also grateful to Chieh-Yu Chang for carefully
reading an earlier version of the manuscript and for valuable
suggestions that improved its presentation. This work was carried out
during the author's visit to Tohoku University in Sendai, which was
supported by the Japan-Taiwan Exchange Association. The author also
gratefully acknowledges the support of the National Science and
Technology Council over the past few years under grant no.\
113-2628-M-007-004.

\end{document}